\theoremstyle{definition}
  \newtheorem{thm}{Theorem}
  \newtheorem{lemma}[thm]{Lemma}
  \newtheorem{prop}[thm]{Proposition}
  \newtheorem{cor}[thm]{Corollary} 
  \newtheorem{defn}[thm]{Definition}
  \newtheorem{rmk}[thm]{Remark}
  \newtheorem{thma}{Theorem}
\newcommand{\R}{\mathbb R}
\newcommand{\Z}{\mathbb Z}
\newcommand{\N}{\mathbb N}
\newcommand{\C}{\mathbb C}
\newcommand{\E}{\mathbb{E}}
\newcommand{\I}{\infty}
\newcommand{\PB}{\mathbb{P}}
\newcommand{\nk}{\binom{v}{k}}
\newcommand{\ud}{\, \mathrm{d}}
\DeclareMathOperator{\real}{Re}
\DeclareMathOperator{\imag}{Im}
\begin{document}

\title{An Asymptotic Formula for the Number of Balanced Incomplete Block Design Incidence Matrices}
\author{Aaron M. Montgomery \\ Department of Mathematics and Computer Science, Baldwin Wallace University}
\maketitle

\begin{abstract}
We identify a relationship between a random walk on a certain Euclidean lattice and incidence matrices of balanced incomplete block designs, which arise in combinatorial design theory.  We then compute the return probability of the random walk and use it to obtain the asymptotic number of BIBD incidence matrices (as the number of columns increases). Our strategy is similar in spirit to the one used by de Launey and Levin to count partial Hadamard matrices.
\end{abstract}


\section{Introduction}

In this paper, we will explore a relationship between a particular family of random walks and a certain collection of combinatorially-defined matrices. These matrices, known as balanced incomplete block design incidence matrices, are important in combinatorial design theory. Our goal will be to estimate the number of these matrices as the number of columns increases. Although this task is quite difficult to do directly, it is possible to estimate the return probability of the random walk and then to exploit the connection between the two problems to estimate the number of the matrices. This tactic, adopted from \cite{wdl_levin}, is noteworthy because it does not require explicit construction of the matrices being studied.
\begin{defn} \label{bibd_im_def}
  We say that a $v \times t$ matrix populated with 1's and 0's is an \emph{incidence matrix of a balanced incomplete block design} if there are positive integers $k$ and $\lambda$ such that:
  \begin{itemize}
    \item each column has exactly $k$ 1's, and
    \item each pair of distinct rows has inner product $\lambda$, which is independent of the choice of the pair.
  \end{itemize}
\end{defn}
We will use BIBD as a shorthand for balanced incomplete block design. The parameter $t$ is commonly written in literature as $b$ instead; we use $t$ here due to the connection to random walks in this paper. It is well-known that if $\lambda > 0$, the above conditions imply that the number of 1's in each row is a constant, which we will call $r$.  The following relations between $v, t, k, r, $ and $\lambda$ are also well-known:
  \begin{align}
	tk & = vr \label{BIBD_relation1}\\
	r(k-1) & = \lambda (v-1) \label{BIBD_relation2} \\
	tk (k-1) & = \lambda v (v-1) \label{BIBD_relations}
  \end{align}
A reference for \eqref{BIBD_relation1} and \eqref{BIBD_relation2} can be found at \cite{dinitz}*{p.\ 2}; from these, one can easily derive \eqref{BIBD_relations}.  

The task of analyzing and enumerating BIBDs has been well-studied; see, for instance, \cites{ostergard,dm,rowlinson,ok} for a few of many examples. A good summary of this work can be found in \cite{crc}*{ch. 1}. Often, the focus is on the underlying designs, rather than on the incidence matrices themselves. The primary difference between the two approaches is that when studying the matrices, we will regard matrices that differ only by a permutation of the columns as distinct, but if counting the underlying designs, such matrices would only count once since they correspond to the same design. Additionally, two designs are regarded to be isomorphic if their incidence matrices are the same up to a permutation of rows and columns (and are often similarly only counted once), but such matrices are regarded to be distinct for our purposes. 

The problems of counting the designs and of counting the incidence matrices (in the fashion described above) are certainly related, but translating between the two is a matter of some nontrivial combinatorics. Much of the aforementioned progress in enumerating BIBDs relies on computer algorithms, whereas this paper is purely analytic in nature. These problems are both difficult, and for a given set of parameters $v, t, k, r, \lambda$, even establishing the existence of such a design (or an incidence matrix) is often nontrivial. The conditions from \eqref{BIBD_relation1}, \eqref{BIBD_relation2}, and \eqref{BIBD_relations} are known to be necessary but not sufficient. For instance, the Bruck-Ryser-Chowla Theorem shows that no BIBD exists when $v = t = 22$, $k = r = 7$, and $\lambda = 2$. \cite{cr}*{Theorem 3} A computer search has shown the nonexistence of a BIBD with $v = 46,$ $t = 69,$ $k = 6,$ $r = 9,$ and $\lambda = 1$. \cite{htjl} On the other hand, it is known that BIBDs with $v = 12$, $t = 44$, $k = 3$, $r = 11$, and $\lambda = 2$ exist, and their enumeration has been carried out by a computer. \cite{ostergard} The complexity of this problem motivates our attempt to find an asymptotic formula for counting BIBD incidence matrices.

We note from relations \eqref{BIBD_relation1}, \eqref{BIBD_relation2}, and \eqref{BIBD_relations} that the parameters $v, k, t$ determine $r$ and $\lambda$, so we will use $v, k, t$ as our free parameters. The goal of this paper will be to prove the following theorem:

  \begin{thma} \label{number_BIBD_matrices}
  Let $v, k, t$ be such that $k \geq 2$, $v-k \geq 2$, $t \frac{k}{v} \in \Z$, and $t \frac{k(k-1)}{v(v-1)} \in \Z$.  Let $\Psi_{v,k,t}$ be the number of BIBD incidence matrices of dimensions $v \times t$ with $k$ 1's in each column, let $d = \binom{v}{2}$, and let
  \[ f(v,k) = 2 \left( \frac{(k-1)(v-k-1)}{v-3} \right)^{d-v} \left( \frac{k(v-k)}{v-2} \right)^{d-1} \left( \frac{1}{v(v-1)} \right)^{d} .\]
Then for fixed $v$ and $k$,
  \[\Psi_{v,k, t} = [1 + o(1)] \frac{\nk^t}{\sqrt{(2 \pi t)^{d-1} f(v,k)}} \textrm{ as } t \to \I . \]
  \end{thma}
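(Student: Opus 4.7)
The plan is to recast $\Psi_{v,k,t}$ as a return probability of a suitable random walk and then estimate this probability via a lattice local central limit theorem (LCLT), in the spirit of \cite{wdl_levin}. To set up the correspondence, let $X_1, \ldots, X_t$ be i.i.d.\ uniformly random $k$-subsets of $[v]$, and for $S \in \binom{[v]}{k}$ define $\phi(S) \in \Z^d$ (with $d = \binom{v}{2}$) to be the indicator vector on pairs $\{a,b\} \subset S$. A $v \times t$ matrix with $k$ ones per column corresponds bijectively to an ordered sample path $(\phi(X_1), \ldots, \phi(X_t))$, and the BIBD condition translates---via relations \eqref{BIBD_relation1}--\eqref{BIBD_relations}, which render the row sums automatic---to the event $\{W_t = \vec\lambda\}$, where $W_t = \sum_i \phi(X_i)$ and $\vec\lambda = (\lambda, \ldots, \lambda) \in \Z^d$. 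Hence
\[ \Psi_{v,k,t} = \binom{v}{k}^t \cdot \PB(W_t = \vec\lambda). \]
Each step satisfies $\sum_{a<b} \phi(X_i)_{ab} = \binom{k}{2}$, so the walk lives on an affine hyperplane of effective dimension $d - 1$, and relation \eqref{BIBD_relations} identifies $\vec\lambda$ with $t \cdot \E[\phi(X)]$---precisely the centroid of the walk.

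Applying the LCLT (after verifying the aperiodicity and full-rank hypotheses, which should follow from $k \geq 2$ and $v - k \geq 2$) yields
\[ \PB(W_t = \vec\lambda) = [1+o(1)] \cdot \frac{\mathrm{covol}(\Lambda)}{(2 \pi t)^{(d-1)/2} \sqrt{\det \Sigma'}}, \]
where $\Sigma'$ is the step covariance restricted to the tangent hyperplane $H = \{x \in \R^d : \sum x_{ab} = 0\}$ and $\Lambda \subset H$ is the lattice of displacements $\phi(S) - \phi(S')$.

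The determinant $\det \Sigma'$ admits a clean computation via representation theory. The permutation representation of $S_v$ on $\binom{[v]}{2}$ decomposes as trivial $\oplus$ standard $\oplus\;(v{-}2, 2)$-irrep, with dimensions $1$, $v{-}1$, $d{-}v$. Since $\Sigma$ is $S_v$-equivariant, Schur's lemma forces it to act as a scalar on each isotypic piece; the scalars are found by evaluating $\Sigma$ on simple eigenvectors with the help of the explicit hypergeometric-style moments of $\phi(X)$, yielding
\[ \alpha_{\mathrm{std}} = \frac{k(k-1)^2(v-k)}{v(v-1)(v-2)}, \qquad \alpha_{\mathrm{other}} = \frac{k(k-1)(v-k)(v-k-1)}{v(v-1)(v-2)(v-3)}, \]
so $\det \Sigma' = \alpha_{\mathrm{std}}^{v-1} \alpha_{\mathrm{other}}^{d-v}$.

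The chief obstacle will be computing the covolume $\mathrm{covol}(\Lambda)$. The lattice $\Lambda$ sits inside the $A_{d-1}$ root lattice $\Z^d \cap H$, whose covolume is $\sqrt{d}$, and for the LCLT output to simplify to $f(v,k)$ one needs $\mathrm{covol}(\Lambda) = \sqrt{d} \cdot (k-1)^{v-1}$, i.e.\ that $[\Z^d \cap H : \Lambda] = (k-1)^{v-1}$. I would approach this by choosing explicit generators of $\Lambda$ from displacements $\phi(S) - \phi(S')$ with $S, S'$ differing by a single exchange, and then analyzing the resulting integer matrix via Smith normal form (or alternatively by a dual argument identifying the finite character group trivial on $\Lambda$). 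Once the covolume is in hand, substitution into the LCLT identity and algebraic simplification (using $d = v(v-1)/2$ to absorb the remaining factors of $\sqrt{d}$ and the lone factor of $2$ in $f(v,k)$) produces exactly $\sqrt{(2\pi t)^{d-1} f(v,k)}$ in the denominator, completing the proof.
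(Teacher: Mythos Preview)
Your approach is correct and genuinely different from the paper's, though the two are Fourier-dual to one another in a precise sense.

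The paper does not invoke a black-box lattice LCLT; instead it carries out the Fourier inversion by hand. Rather than identify the step lattice $\Lambda$ and its covolume, the paper works on the dual side: it determines the set $\Lambda_X=\{\vec\theta:|\Phi(\vec\theta)|=1\}$ and shows (Lemma~\ref{lambda_structure}) that, modulo the diagonal line $\R\vec 1$, it has exactly $(k-1)^{v-1}$ elements. Since $\Lambda_X/(2\pi\Z^d+\R\vec 1)$ is precisely the Pontryagin dual of $(\Z^d\cap H)/\Lambda$, this \emph{is} the index computation $[\Z^d\cap H:\Lambda]=(k-1)^{v-1}$ you propose, just phrased in the character picture. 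Your ``dual argument identifying the finite character group trivial on $\Lambda$'' would reproduce Section~\ref{S:lambda_max} almost verbatim; the Smith-normal-form route would be a genuinely new computation.

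The sharpest contrast is in the covariance determinant. The paper spends all of Section~\ref{S:matrix_determinant} computing $\det M$ (a principal $(d-1)\times(d-1)$ minor of the covariance $N$) by repeated application of the Matrix Determinant Lemma and the Woodbury identity. Your representation-theoretic route is dramatically cleaner: the $S_v$-decomposition of $\R^{\binom{[v]}{2}}$ into trivial $\oplus$ standard $\oplus\;(v{-}2,2)$ immediately gives $\det\Sigma'=\alpha_{\mathrm{std}}^{\,v-1}\alpha_{\mathrm{other}}^{\,d-v}$, and the identity $\det M=\det\Sigma'/d$ (which holds for any symmetric matrix with $\vec 1$ in its kernel, by the cofactor argument) recovers the paper's formula in a few lines.

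One caution: the phrase ``after verifying the aperiodicity and full-rank hypotheses'' hides real work. The walk is \emph{not} aperiodic on $\Z^d\cap H$; the point $\vec\lambda$ lies in the correct coset of $\Lambda$ only under the additional hypothesis $tk/v\in\Z$ (this is how \eqref{BIBD_relation1} enters, and the paper handles it via Proposition~\ref{bibd_conditions_walk}). You will need to check that $\vec\lambda-t\phi(S_0)\in\Lambda$ for some (hence every) $S_0$, which is a short but non-vacuous verification.
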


To prove Theorem \ref{number_BIBD_matrices}, we will randomly generate matrices of suitable dimensions as follows: for fixed $v$ and $k$, we define $V_{v,k}$ to be the collection of all vectors in $\R^v$ with $k$ 1's and $v - k$ 0's.  We will construct a BIBD incidence matrix by concatenating randomly-drawn columns from the collection $V_{v,k}$ and considering whether the inner product condition is satisfied for the resulting matrix. This approach is convenient because it avoids the difficult task of actually constructing the matrices (see \cites{morales, abel, vantrung} for a few of the myriad examples of that type of work).

We now define our random walk and explain its correspondence with BIBD incidence matrices.  For an integer $v \geq 2$, we set $d = \binom{v}{2}$; the random walk will occur in $\R^d$, which will be regarded as a set of column vectors.  Instead of using the standard index system for coordinates of $\R^d$ (i.e. $1, \dots, d$), we will take our index set to be the set of all $S \subset \{1, \dots, v\}$ with $|S| = 2$.  We will write the components of $\vec x \in \R^d$ in lexicographic order; that is,
   \[ \vec x = (x_{\{1,2\}}, x_{\{1,3\}}, \dots, x_{\{v-2,v\}}, x_{\{v-1,v\}})^T. \]
We define a function $Z: V_{v,k} \to \R^d$ by
  \[ Z(\vec y) = (y_1 y_2, y_1 y_3, \dots, y_{v-2} y_v, y_{v-1} y_v)^T. \]
The purpose of this function is that if $Y$ is the $v \times t$ matrix with columns $\vec y^{(1)}, \dots, \vec y^{(t)}$, written as $Y = [\vec y^{(1)} \dots \vec y^{(t)}]$, and $\vec 1$ is the vector of all ones, then
  \[ Z(\vec y^{(1)}) + \dots + Z(\vec y^{(t)}) = \lambda \vec 1 \]
if and only if the inner product between any two rows of $Y$ is $\lambda$.  This allows us to reframe the constraint about the inner product or rows as one of a vector sum, which permits us to consider the problem in terms of a random walk.

\begin{defn}
  Let $\{X_t\}$ be the random walk on $\Z^d$ with increments drawn uniformly at random from $\{Z(\vec y): \vec y \in V_{v,k}\}$.
\end{defn}

From the previous discussion, the existence of a BIBD incidence matrix is then equivalent to the entry of the random walk $X_t$ into the diagonal set $\Delta = \{\lambda \vec 1: \lambda \in \Z\}$.  The random walk $X_t$ is not the ideal random walk to consider, for two reasons: first, the set $\Delta$ is infinite, which makes the probability that $X_t$ enters it a bit complicated.  Second, the increments of $X_t$ clearly do not have mean $\vec 0$, since vectors of the form $\{Z(\vec y): \vec y \in V_{v,k}\}$ also have entries that are only $0$ and $1$.

To fix these issues with $X_t$, we introduce a new random walk, $Y_t$, which will be $X_t$ with a correction for its drift.  If a vector is chosen uniformly from $\{Z(\vec y): \vec y \in V_{v,k} \}$, then the probability of coordinate $\{i,j\}$ being $1$ is equal to the probability that $y_i = 1$ and $y_j = 1$.  This probability is $\binom{v-2}{k-2}/\nk = \frac{k(k-1)}{v(v-1)}$, so to obtain a centered random walk, we subtract this term from each coordinate of the increments.  That is,
  \begin{equation} \label{defn_of_y}
    Y_t = X_t - \frac{k(k-1)}{v(v-1)} t \vec 1. 
  \end{equation}
Since we are interested in the probability that the random walk $X_t$ is equal to $\lambda \vec 1$ for some constant $\lambda$, we notice by \eqref{BIBD_relations} that $\lambda = \frac{k(k-1)}{v(v-1)} t$, which implies that $X_t = \lambda \vec 1$ if and only if $Y_t = 0$.  Hence, our tactic will be to estimate the probability that the random walk $Y_t$ returns to $\vec 0$ after $t$ steps.

Because $v \times t$ matrices populated with columns from $V_{v,k}$ lie in a 1-1 correspondence with paths of the random walk $X_t$ (hence, with paths of $Y_t$), it follows that
\[ \frac{\# \textrm{ BIBD incidence matrices} }{\# \textrm{ total matrices }} = \frac{\# \textrm{ return paths of $Y_t$ to $\vec 0$}}{\# \textrm{ all paths of $Y_t$}}. \]
The right-hand side of this equation is precisely the probability that the random walk $Y_t$ returns to $0$, which we will denote by $\PB_{v,k}^{(t)}(\vec 0, \vec 0)$.  (Our random walks are understood to always start at the origin, and the $v,k$ subscript serves only to indicate the preset parameters $v$ and $k$.)  The denominator of the left-hand side is $\nk^t$, since there are $\nk$ distinct choices for each of the $t$ columns.  Thus,
  \begin{equation} \label{prob_number_relation}
    \# \textrm{ BIBD incidence matrices} = \nk^t \PB_{v,k}^{(t)}(\vec 0, \vec 0)
  \end{equation}
so to prove Theorem \ref{number_BIBD_matrices}, we need only to find sufficiently accurate estimates on the return probability of the random walk $Y_t$.  We will accomplish this by proving a local central limit theorem for the quantity $\PB_{v,k}^{(t)}(\vec 0, \vec 0)$.

The basic strategy for estimating $\PB_{v,k}^{(t)}(\vec 0, \vec 0)$ will be the standard tactic of using the Fourier inversion formula (see, for instance, \cite{spitzer}*{P3, p.\ 57}).  Using the characteristic function $\Phi_Y: \R^d \to \C$, defined as
  \[\Phi_Y(\vec \theta) = \E[e^{i \vec \theta \cdot Y_1}] = \sum_{\vec y \in V_{v,k}} \nk^{-1} e^{i \vec \theta \cdot \left(Z(\vec y) - \frac{k(k-1)}{v(v-1)} \vec 1 \right)} \]
then for values $t$ such that $Y_t$ is supported on $\Z^d$, the return probability can be calculated as
  \begin{equation} \label{fourier_inversion}
    \PB_{v,k}^{(t)}(\vec 0, \vec 0) = \frac{1}{(2 \pi)^d} \int_{[-\pi, \pi]^d} \Phi_Y(\vec \theta)^t \ud \vec \theta \, .
  \end{equation}
We note that $Y_t$ is supported on $\Z^d$ if and only if $t \frac{k(k-1)}{v(v-1)} \in \Z$. However, it is not necessary to use the inversion formula in the case when $t\frac{k(k-1)}{v(v-1)} \not \in \Z$, since by \eqref{BIBD_relations} we trivially see that no such BIBD incidence matrix can exist.

To estimate the integral in \eqref{fourier_inversion}, we will divide $[-\pi, \pi]^d$ into regions where $|\Phi_Y(\vec \theta)|$ is close to $1$ and those where it is not, and we will provide estimates on $\Phi_Y(\vec \theta)$ accordingly.  As $t$ becomes large, the bulk of the integral will be determined by the regions in $\R^d$ where $|\Phi_Y(\vec \theta)|$ is close to 1, and the contributions from the other parts will become negligible. Since the random walk $Y_t$ is merely a spatially-shifted version of $X_t$, it will also be useful to consider the analogously-defined characteristic function $\Phi_X(\vec \theta) = \E[e^{i \vec \theta \cdot X_1}]$; we will explore the connections between the two and will switch our focus between $\Phi_X$ and $\Phi_Y$ depending on which is more convenient.

Before the proof of Theorem \ref{number_BIBD_matrices}, we note that the restrictions that $k \geq 2$ and $v-k \geq 2$ occur for technical reasons, although if $k = 1$, the BIBD incidence matrices are trivial in the sense that the inner product of any two distinct rows of any such matrix is automatically 0.  The case where $k = 2$ is nearly trivial as well, since a BIBD incidence matrix with $k = 2$ can only occur when every possible column from $V_{v,k}$ occurs the same number of times.  One can see without any advanced tactics that the number of such matrices must then be
\[ \Psi_{v, 2, t} = \frac{t!}{ [(t/d)!]^d}\]
which is asymptotically equivalent to the formula in Theorem \ref{number_BIBD_matrices} as shown by Stirling's formula.

We also remark that while in principle the calculation of the return probability of $Y_t$ is just a  matter of computing asymptotic values in a local central limit theorem, the walk has a special structure that complicates matters.  In particular, the increment set of the walk is not symmetric, and the walk takes place on a sublattice of $\R^d$ which is difficult to specify as a purely combinatorial entity.  For these reasons, the common approach of explicitly transforming the walk $Y_t$ to a strongly aperiodic random walk on an integer lattice is challenging here, and we will instead opt for the Fourier-analytic approach as previously outlined.



The outline of the sections is as follows: in Section \ref{S:lambda_max}, we give an explicit description of the so-called `maximal set' of the characteristic function; that is, the set where $|\Phi_Y(\vec \theta)| = 1$.  In Section \ref{S:multiset}, we discuss how to decompose the integral in \eqref{fourier_inversion} in terms of this maximal set.  In Section \ref{S:upper_bounds}, we provide estimates on the integral contributions far from the maximal set.  In Section \ref{S:near_lambda}, we introduce an important combinatorially-defined matrix $N$ and use it to obtain bounds on the integral contribution near the maximal set.  In Section \ref{S:matrix_determinant}, we compute the expression $f(v,k)$ found in the statement of Theorem \ref{number_BIBD_matrices}.  This expression will arise as the determinant of a principal submatrix of the aforementioned matrix $N$.  Finally, in Section \ref{S:proof} we put all the parts together to prove Theorem \ref{number_BIBD_matrices}.

\section{Extreme Values of the Characteristic Function} \label{S:lambda_max}

In this section, we seek to understand the set where the characteristic functions $\Phi_X$ and $\Phi_Y$ have maximum absolute value.  We begin with the operative definitions:
  \begin{align*}
	\Lambda_X & =  \{\vec \theta \in [-\pi, \pi]^d : |\Phi_X(\vec \theta)| = 1 \} \\
	\Lambda_Y & =  \{\vec \theta \in [-\pi, \pi]^d : |\Phi_Y(\vec \theta)| = 1 \} 
  \end{align*}

\begin{prop} \label{sets_are_equal}
  The sets $\Lambda_X$ and $\Lambda_Y$ are equal.
\end{prop}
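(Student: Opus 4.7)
The plan is to use the fact that the random walk $Y_t$ is just a deterministic spatial shift of $X_t$, as recorded in \eqref{defn_of_y}. At the level of single increments, this means
\[ Y_1 = X_1 - \frac{k(k-1)}{v(v-1)} \vec 1, \]
so I would compute $\Phi_Y$ directly from the definition of the characteristic function and pull out the deterministic shift:
\[ \Phi_Y(\vec \theta) = \E[e^{i \vec \theta \cdot Y_1}] = e^{-i \frac{k(k-1)}{v(v-1)} \vec \theta \cdot \vec 1} \, \E[e^{i \vec \theta \cdot X_1}] = e^{-i \frac{k(k-1)}{v(v-1)} \vec \theta \cdot \vec 1} \, \Phi_X(\vec \theta). \]

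From here the conclusion is immediate: the leading factor is a complex exponential with real argument, hence has modulus one, so $|\Phi_Y(\vec \theta)| = |\Phi_X(\vec \theta)|$ for every $\vec \theta \in [-\pi, \pi]^d$. Therefore the conditions $|\Phi_X(\vec \theta)| = 1$ and $|\Phi_Y(\vec \theta)| = 1$ are equivalent, and $\Lambda_X = \Lambda_Y$.

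There is no real obstacle here; the statement is essentially a book-keeping check. The only thing to be careful about is confirming that the shift really is deterministic (so that it contributes a pure phase rather than a factor with nontrivial modulus), and this is exactly the content of \eqref{defn_of_y}. The more substantial work of actually identifying the set $\Lambda_X$ (and hence $\Lambda_Y$) as an explicit subset of $[-\pi, \pi]^d$ presumably comes in the next result of Section \ref{S:lambda_max}; the present proposition only needs this one-line reduction.
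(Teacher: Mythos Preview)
Your proof is correct and essentially identical to the paper's own argument: both observe that $Y_1 = X_1 - \vec u$ for a deterministic $\vec u$, pull the resulting unimodular phase $e^{-i \vec\theta \cdot \vec u}$ out of the expectation, and conclude $|\Phi_Y(\vec\theta)| = |\Phi_X(\vec\theta)|$.
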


\begin{proof}
Note that $Y_1 = X_1 - \vec u$, where $\vec u$ is deterministic.  Then for any $\vec \theta$,
  \begin{align*}
  	| \Phi_Y(\vec \theta)| & =  |\E[e^{i \vec \theta \cdot (X_1 - \vec u)}]| \\
	& =  |\E[e^{i \vec \theta \cdot X_1}]| \cdot |e^{-i \vec \theta \cdot \vec u}| \\
	& =  |\Phi_X(\vec \theta)|
  \end{align*}
which gives the desired result.
\end{proof}

Although $\Lambda_Y$ corresponds to the random walk actually used in the calculation and in the Fourier Inversion formula in \eqref{fourier_inversion}, $\Lambda_X$ corresponds to the walk without the drift correction and is at times more computationally convenient.  We note that
\[\vec \ell \in \Lambda_X \iff e^{i \vec \ell \cdot Z(\vec x)} = e^{i \vec \ell \cdot Z(\vec y) } \textrm{ for all } \vec x, \vec y \in V_{v,k}\]
which implies that
\begin{equation} \label{modulo_condition}
	\vec \ell \in \Lambda_X \iff \textrm{ for all }\vec x, \vec y \in V_{v,k}, \ \vec \ell \cdot Z(\vec x) \equiv \vec \ell \cdot Z(\vec y) \pmod {2 \pi} .
\end{equation}

\begin{prop} \label{char_function_properties}
  If $\vec \ell \in \Lambda_X$ and $\vec \gamma \in [-\pi, \pi]^d$, then $\Phi_X(\vec \ell + \vec \gamma) = \Phi_X(\vec \ell) \Phi_X(\vec \gamma)$ and $\Phi_Y(\vec \ell + \vec \gamma) = \Phi_Y(\vec \ell)\Phi_Y(\vec \gamma)$.
\end{prop}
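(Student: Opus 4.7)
The plan is to exploit the characterization of $\Lambda_X$ given in \eqref{modulo_condition}: if $\vec \ell \in \Lambda_X$, then the complex number $e^{i \vec \ell \cdot Z(\vec y)}$ takes the same value for every $\vec y \in V_{v,k}$. Call this common value $c_{\vec \ell}$. Since $\Phi_X(\vec \ell)$ is a uniform average of these identical terms, we immediately obtain $\Phi_X(\vec \ell) = c_{\vec \ell}$.

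Next, I would write the characteristic function at the shifted argument as
\[\Phi_X(\vec \ell + \vec \gamma) = \sum_{\vec y \in V_{v,k}} \binom{v}{k}^{-1} e^{i \vec \ell \cdot Z(\vec y)} e^{i \vec \gamma \cdot Z(\vec y)} = c_{\vec \ell} \sum_{\vec y \in V_{v,k}} \binom{v}{k}^{-1} e^{i \vec \gamma \cdot Z(\vec y)} = \Phi_X(\vec \ell) \Phi_X(\vec \gamma),\]
where the constant factor $c_{\vec \ell}$ pulls out of the sum precisely because it is independent of $\vec y$. This handles the $\Phi_X$ claim.

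For $\Phi_Y$, I would use the relationship $Y_1 = X_1 - \vec u$ with $\vec u = \frac{k(k-1)}{v(v-1)} \vec 1$ (as already employed in the proof of Proposition \ref{sets_are_equal}) to write $\Phi_Y(\vec \theta) = e^{-i \vec \theta \cdot \vec u} \Phi_X(\vec \theta)$. Substituting $\vec \theta = \vec \ell + \vec \gamma$ and splitting the exponential as $e^{-i \vec \ell \cdot \vec u} \cdot e^{-i \vec \gamma \cdot \vec u}$ converts the $\Phi_X$ multiplicativity into the analogous statement for $\Phi_Y$.

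There is no real obstacle here; the proposition is essentially just unpacking the defining property of $\Lambda_X$, namely that $\vec \ell$ acts as a character-like element on the increment distribution. The only care needed is in recognizing that \eqref{modulo_condition} promotes pointwise equality modulo $2\pi$ to a genuine factorization of the exponential inside the expectation.
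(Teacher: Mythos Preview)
Your proof is correct and follows essentially the same approach as the paper: both use \eqref{modulo_condition} to conclude that $e^{i\vec\ell\cdot Z(\vec y)}$ (equivalently, $e^{i\vec\ell\cdot X_1}$) is a constant that factors out of the expectation. The only cosmetic difference is that for $\Phi_Y$ the paper repeats the same argument with $Y_1$ in place of $X_1$, whereas you deduce it from the $\Phi_X$ case via the identity $\Phi_Y(\vec\theta)=e^{-i\vec\theta\cdot\vec u}\Phi_X(\vec\theta)$; both are equally valid.
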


\begin{proof}
  Let $\vec \ell \in \Lambda_X$.  By \eqref{modulo_condition}, we see that $\vec \ell \cdot X_1$ does not depend on the random vector $X_1$, so $e^{i \vec \ell \cdot X_1}$ is a deterministic quantity.  Hence, 
  \begin{align*}
	  \Phi_X(\vec \ell + \vec \gamma) & =  \E[e^{i(\vec \ell + \vec \gamma) \cdot X_1}]\\
	  & = e^{i \vec \ell \cdot X_1} \E[e^{\vec \gamma \cdot X_1}]
  \end{align*}
and since $e^{i \vec \ell \cdot X_1} = \E[e^{i \vec \ell \cdot X_1}]$, the first claim is shown.  The proof of the same statement for $\Phi_Y$ is identical.
\end{proof}

\begin{rmk} \label{lambda_is_closed}
  In particular, we see that $\Lambda_X$ is closed under addition modulo $2 \pi$.  Moreover, \eqref{modulo_condition} shows that $\Lambda_X$ is closed under negation, so it is closed under subtraction (modulo $2 \pi$) as well.
\end{rmk}

In all the following, we will assume that $k \geq 2$ and $v-k \geq 2$. We will frequently refer to vectors in $\R^d$ being equivalent modulo $2 \pi$; by this, we mean that all their corresponding coordinates should be congruent to one another modulo $2 \pi$.

\begin{lemma} \label{near_lambda_1}
  Let $\vec \mu \in [-\pi, \pi]^d$.  Suppose that there exists $\epsilon_0 > 0$ such that for all $\vec x, \vec y \in V_{v,k}$, there exist $z \in \Z$ and $\epsilon$ with $|\epsilon| < \epsilon_0$ such that $[Z(\vec x) \cdot \vec \mu - Z(\vec y) \cdot \vec \mu] = 2 \pi z + \epsilon$.   Then for any distinct integers $a, b, c, d \in \{1, \dots, v\}$ there exist $z \in \Z$ and $\epsilon$ with $|\epsilon| < 2 \epsilon_0$ such that $[\mu_{\{a,c\}} - \mu_{\{b,c\}}] = [\mu_{\{a,d\}} - \mu_{\{b,d\}}] + 2 \pi z + \epsilon$.
\end{lemma}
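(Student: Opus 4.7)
The plan is to exploit the hypothesis twice, applying it to two carefully chosen pairs of vectors in $V_{v,k}$ whose alternating signed combination isolates precisely the coordinates of $\vec\mu$ appearing in the conclusion. The key observation is that for any $\vec y \in V_{v,k}$ with support $S$, one has $Z(\vec y) \cdot \vec\mu = \sum_{\{i,j\} \subset S} \mu_{\{i,j\}}$, so it suffices to design four $k$-subsets of $\{1,\dots,v\}$ whose pairwise-pair indicator sums telescope the right way.

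Given distinct $a,b,c,d \in \{1,\dots,v\}$, I would fix an auxiliary set $T \subset \{1,\dots,v\} \setminus \{a,b,c,d\}$ of size $k-2$ (which exists precisely because $k \geq 2$ and $v-k \geq 2$, so that $v-4 \geq k-2$), and set
\[ S_1 = \{a,c\} \cup T, \quad S_2 = \{b,c\} \cup T, \quad S_3 = \{a,d\} \cup T, \quad S_4 = \{b,d\} \cup T. \]
Writing $\vec x_i$ for the indicator vector of $S_i$, each $\vec x_i$ lies in $V_{v,k}$. Expanding the four quantities $Z(\vec x_i) \cdot \vec\mu = \sum_{\{i,j\} \subset S_i} \mu_{\{i,j\}}$ and forming the alternating combination, every term involving a pair inside $T$, or a pair $\{t,e\}$ with $t \in T$ and $e \in \{a,b,c,d\}$, appears with canceling signs, leaving exactly $\mu_{\{a,c\}} - \mu_{\{b,c\}} - \mu_{\{a,d\}} + \mu_{\{b,d\}}$.

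To finish, I would apply the hypothesis once to the pair $(\vec x_1,\vec x_2)$ to write $Z(\vec x_1)\cdot\vec\mu - Z(\vec x_2)\cdot\vec\mu = 2\pi z_1 + \epsilon_1$ with $|\epsilon_1| < \epsilon_0$, and once to $(\vec x_3,\vec x_4)$ to obtain the analogous identity with integer $z_2$ and $|\epsilon_2| < \epsilon_0$. Subtracting the two identities and applying the triangle inequality to $|\epsilon_1 - \epsilon_2| < 2\epsilon_0$ yields the desired relation with $z = z_1 - z_2$ and $\epsilon = \epsilon_1 - \epsilon_2$, and rearranging gives the stated form.

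There is not really a deep obstacle here; conceptually the only step with any content is finding the four subsets, and once they are in hand the cancellation is immediate and the factor of $2$ in the error bound is forced by using the hypothesis twice. The hypotheses $k \geq 2$ and $v-k \geq 2$ are used only to guarantee the existence of the auxiliary set $T$, so the lemma would fail at the level of setup in their absence.
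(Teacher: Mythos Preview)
Your proposal is correct and is essentially the same argument as the paper's proof: both construct the same four $k$-subsets $\{a,c\}\cup T$, $\{b,c\}\cup T$, $\{a,d\}\cup T$, $\{b,d\}\cup T$ (the paper after a WLOG reduction to $a,b,c,d=1,2,3,4$ and with a slightly different labeling), observe the same telescoping cancellation, and apply the hypothesis to two pairs to pick up the factor of $2$ in the error. Your explicit remark that the existence of $T$ uses $k\ge 2$ and $v-k\ge 2$ matches the paper's implicit use of those standing assumptions.
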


The interpretation of this lemma is that if $[Z(\vec x) \cdot \vec \mu - Z(\vec y) \cdot \vec \mu] \bmod 2\pi$ is nearly 0 for all $\vec x, \vec y \in V_{v,k}$, then expressions of the form $[\mu_{\{a,j\}} - \mu_{\{b,j\}}] \bmod 2\pi$ are (nearly) independent of $j$. 

After establishing Lemma \ref{near_lambda_1}, we obtain a useful corollary by letting $\epsilon_0 \to 0$ and using \eqref{modulo_condition}:

\begin{cor} \label{lambda_obs_1}
  If $\vec \ell \in \Lambda_X$, then for any fixed $a,b$ the expression $\ell_{\{a,j\}} - \ell_{\{b,j\}}$ is independent of $j$ $\pmod {2 \pi}$.
\end{cor}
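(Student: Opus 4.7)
The plan is to read the corollary as the $\epsilon_0 \to 0$ limit of Lemma \ref{near_lambda_1} applied to $\vec \mu = \vec \ell$. First I would observe that when $\vec \ell \in \Lambda_X$, the characterization \eqref{modulo_condition} says that $Z(\vec x) \cdot \vec \ell - Z(\vec y) \cdot \vec \ell \in 2\pi \Z$ for every $\vec x, \vec y \in V_{v,k}$. Thus the hypothesis of Lemma \ref{near_lambda_1} is satisfied for $\vec \mu = \vec \ell$ with \emph{arbitrary} $\epsilon_0 > 0$: in each decomposition $Z(\vec x) \cdot \vec \ell - Z(\vec y) \cdot \vec \ell = 2\pi z + \epsilon$, I can take $z$ to be the integer supplied by \eqref{modulo_condition} and set $\epsilon = 0$, which trivially satisfies $|\epsilon| < \epsilon_0$.

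Next, for any fixed four distinct indices $a, b, c, d \in \{1, \dots, v\}$, define the single real number
\[ R \;:=\; (\ell_{\{a,c\}} - \ell_{\{b,c\}}) - (\ell_{\{a,d\}} - \ell_{\{b,d\}}) . \]
The conclusion of Lemma \ref{near_lambda_1} then supplies, for each $\epsilon_0 > 0$, an integer $z(\epsilon_0) \in \Z$ and a real number $\epsilon(\epsilon_0)$ with $|\epsilon(\epsilon_0)| < 2\epsilon_0$ such that $R = 2\pi z(\epsilon_0) + \epsilon(\epsilon_0)$. Since $R$ is a fixed real number while $\epsilon(\epsilon_0) \to 0$ as $\epsilon_0 \to 0$, the integer $z(\epsilon_0) = (R - \epsilon(\epsilon_0))/(2\pi)$ converges and must eventually equal the unique nearest integer to $R/(2\pi)$; for all sufficiently small $\epsilon_0$ the error $\epsilon(\epsilon_0)$ is therefore exactly $0$. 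Consequently $R = 2\pi z$ for some $z \in \Z$, i.e., $R \equiv 0 \pmod{2\pi}$.

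Finally, I would interpret this as the desired independence. The congruence $\ell_{\{a,c\}} - \ell_{\{b,c\}} \equiv \ell_{\{a,d\}} - \ell_{\{b,d\}} \pmod{2\pi}$ holds for every pair $c \neq d$ drawn from $\{1, \dots, v\} \setminus \{a,b\}$ (the standing hypotheses $k \geq 2$ and $v-k \geq 2$ ensure $v \geq 4$, so that four distinct indices are always available). This is exactly the statement that $\ell_{\{a,j\}} - \ell_{\{b,j\}} \pmod{2\pi}$ does not depend on $j$ in its allowed range. I do not anticipate any genuine obstacle here: all of the combinatorial content already lives in Lemma \ref{near_lambda_1}, and the corollary's contribution is merely the elementary upgrade from an ``approximate for all $\epsilon_0$'' statement to an exact mod-$2\pi$ identity via the limiting argument above.
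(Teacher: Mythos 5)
Your proposal is correct and matches the paper's own derivation, which likewise obtains the corollary from Lemma \ref{near_lambda_1} by invoking \eqref{modulo_condition} and letting $\epsilon_0 \to 0$; you have simply spelled out the limiting argument that the paper leaves implicit. No gaps.
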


We remark that the original idea for Corollary \ref{lambda_obs_1} was suggested by Warwick de Launey in a personal communication via David Levin.

\begin{proof}[Proof of Lemma \ref{near_lambda_1}]

Without loss of generality, we assume that $a = 1$, $b = 2$, $c = 3$, and $d = 4$. We first define the following vectors in $V_{v,k}$:
  \begin{align*}
  	\vec x_1 & = (1, 0, 1, 0, \overbrace{1, \dots, 1}^{k-2}, \overbrace{0, \dots, 0}^{v-k-2})^T \\
	\vec x_2 & = (1, 0, 0, 1, 1, \dots, 1, 0, \dots, 0)^T \\
	\vec x_3 & = (0, 1, 1, 0, 1, \dots, 1, 0, \dots, 0)^T \\
	\vec x_4 & = (0, 1, 0, 1, 1, \dots, 1, 0, \dots, 0)^T
  \end{align*}
These vectors are identical except in the first four coordinates.  For any $\vec \mu \in [-\pi, \pi]^d$, we have
  \begin{align*}
  	\vec \mu \cdot Z(\vec x_1) & = \mu_{\{1,3\}} + \sum_{j=5}^{k+2} \mu_{\{1,j\}} + \sum_{j=5}^{k+2} \mu_{\{3,j\}} + \sum_{5 \leq i < j \leq k+2} \mu_{\{i,j\}} \\
  	\vec \mu \cdot Z(\vec x_2) & = \mu_{\{1,4\}} + \sum_{j=5}^{k+2} \mu_{\{1,j\}} + \sum_{j=5}^{k+2} \mu_{\{4,j\}} + \sum_{5 \leq i < j \leq k+2} \mu_{\{i,j\}} \\
  	\vec \mu \cdot Z(\vec x_3) & = \mu_{\{2,3\}} + \sum_{j=5}^{k+2} \mu_{\{2,j\}} + \sum_{j=5}^{k+2} \mu_{\{3,j\}} + \sum_{5 \leq i < j \leq k+2} \mu_{\{i,j\}} \\
  	\vec \mu \cdot Z(\vec x_4) & = \mu_{\{2,4\}} + \sum_{j=5}^{k+2} \mu_{\{2,j\}} + \sum_{j=5}^{k+2} \mu_{\{4,j\}} + \sum_{5 \leq i < j \leq k+2} \mu_{\{i,j\}}	
  \end{align*}
and hence,
\begin{align*}
 & \vec \mu \cdot [Z(\vec x_1) - Z(\vec x_2)] + \vec \mu \cdot [Z(\vec x_4) - Z(\vec x_3)] \\
 & \qquad \quad = \vec \mu \cdot [Z(\vec x_1) - Z(\vec x_2) - Z(\vec x_3) + Z(\vec x_4)] \\
 & \qquad \quad = \mu_{\{1,3\}} + \mu_{\{2,4\}} - \mu_{\{1,4\}} - \mu_{\{2,3\}}. 
\end{align*}

Our assumption implies that there exist $z \in \Z$ and $\epsilon_1 \in (-2 \epsilon_0, 2 \epsilon_0)$ such that
  \begin{equation*}
    [\mu_{\{1,3\}} - \mu_{\{2,3\}}] = [\mu_{\{1,4\}} - \mu_{\{2,4\}}] + \epsilon_1 + 2 \pi z
  \end{equation*}
by the triangle inequality.  

To verify that generality was not lost in our above argument, we note that if $a, b, c, d$ were arbitrary and distinct, we could permute the coordinates of the $\vec x_1, \vec x_2, \vec x_3, \vec x_4$ appropriately and repeat the same argument. The essential point is that these four vectors are identical in all but the coordinates $a$, $b$, $c$, and $d$, and that their differences in those coordinates parallel the ones above. After this adjustment, the proof proceeds as above. \end{proof}

\begin{lemma} \label{near_lambda_2}
  Let $\vec \mu \in [-\pi, \pi]^d$.  Suppose that there exists $\epsilon_0 > 0$ such that for all $\vec x, \vec y \in V_{v,k}$, there exist $z \in \Z$ and $\epsilon$ with $|\epsilon| < \epsilon_0$ such that $[Z(\vec x) \cdot \vec \mu - Z(\vec y) \cdot \vec \mu] = 2 \pi z + \epsilon$. Then for all $a, b, c, d$, there exists $z \in \Z$ and $\epsilon$ with $|\epsilon | < 4 \epsilon_0$ such that $[\mu_{\{a,b\}} - \mu_{\{c,d\}} ]= \frac{2 \pi}{k-1} z + \epsilon$.
\end{lemma}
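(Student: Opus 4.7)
The plan is to build on Lemma \ref{near_lambda_1} by constructing vectors in $V_{v,k}$ whose $Z$-images differ by a single ``support swap,'' so that the hypothesis constrains one sum of differences of the form $\mu_{\{a,j\}} - \mu_{\{b,j\}}$ that Lemma \ref{near_lambda_1} has already shown to be nearly independent of $j$ modulo $2\pi$. Dividing the resulting estimate by $k-1$ will produce the $\tfrac{2\pi}{k-1}$-spacing, and any $\mu_{\{a,b\}} - \mu_{\{c,d\}}$ can then be written as a telescope of at most two such single-swap differences, giving the $4\epsilon_0$ tolerance.

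Concretely, fix distinct $a, b \in \{1, \dots, v\}$ and use $v - k \geq 2$ to choose a $(k-1)$-element set $C \subset \{1, \dots, v\} \setminus \{a, b\}$. Let $\vec x, \vec y \in V_{v,k}$ be the indicator vectors of $\{a\} \cup C$ and $\{b\} \cup C$. A direct computation gives
\[
Z(\vec x) \cdot \vec \mu - Z(\vec y) \cdot \vec \mu = \sum_{c \in C} \bigl( \mu_{\{a,c\}} - \mu_{\{b,c\}} \bigr),
\]
which by the hypothesis lies within $\epsilon_0$ of a multiple of $2\pi$. Fix a reference $c_0 \in C$; for each other $c \in C$, Lemma \ref{near_lambda_1} applied to the four distinct indices $a, b, c, c_0$ lets me replace $\mu_{\{a,c\}} - \mu_{\{b,c\}}$ by $\mu_{\{a,c_0\}} - \mu_{\{b,c_0\}}$ modulo $2\pi$ at a cost of at most $2\epsilon_0$. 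This shows that $(k-1)\bigl( \mu_{\{a,c_0\}} - \mu_{\{b,c_0\}} \bigr)$ is within $\epsilon_0 + 2(k-2)\epsilon_0 = (2k-3)\epsilon_0$ of a multiple of $2\pi$, and dividing by $k-1$ puts $\mu_{\{a,c_0\}} - \mu_{\{b,c_0\}}$ within $\tfrac{2k-3}{k-1}\epsilon_0 < 2\epsilon_0$ of a multiple of $\tfrac{2\pi}{k-1}$. The degenerate case $k = 2$ is immediate: the hypothesis applied to $\vec x, \vec y$ with supports $\{a, b\}$ and $\{c, d\}$ already yields the claim, since $\tfrac{2\pi}{k-1} = 2\pi$.

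For general pairs, if $\{a,b\}$ and $\{c,d\}$ are disjoint I decompose
\[
\mu_{\{a,b\}} - \mu_{\{c,d\}} = \bigl( \mu_{\{a,b\}} - \mu_{\{c,b\}} \bigr) + \bigl( \mu_{\{c,b\}} - \mu_{\{c,d\}} \bigr),
\]
where each bracketed term is a single-swap difference of the type handled above (shared indices $b$ and $c$, respectively). Each is within $2\epsilon_0$ of a multiple of $\tfrac{2\pi}{k-1}$, so the total is within $4\epsilon_0$ of such a multiple. If the two pairs overlap in exactly one index (say $a = c$), then $\mu_{\{a,b\}} - \mu_{\{c,d\}} = \mu_{\{a,b\}} - \mu_{\{a,d\}}$ is already a single-swap difference with shared index $a$, within $2\epsilon_0$ of the claimed form; the three other overlap configurations are symmetric. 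When the pairs coincide, the claim is trivial. The main obstacle is the arithmetic bookkeeping—one must verify that the averaging factor $\tfrac{2k-3}{k-1}$ is strictly below $2$ and that each telescope piece requires exactly one application of the paragraph-two estimate—but structurally this lemma is a one-step amplification of Lemma \ref{near_lambda_1} that exploits the length $k-1$ of the single-swap sum.
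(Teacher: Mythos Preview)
Your proof is correct and follows essentially the same approach as the paper: construct two vectors in $V_{v,k}$ differing by a single support swap, apply the hypothesis to get a sum of $k-1$ terms of the form $\mu_{\{a,j\}}-\mu_{\{b,j\}}$ close to $2\pi\Z$, use Lemma~\ref{near_lambda_1} to collapse this to $(k-1)$ copies of one term, divide by $k-1$, and then telescope through a shared index for general pairs. Your intermediate constant $(2k-3)\epsilon_0$ is in fact slightly sharper than the paper's $2(k-1)\epsilon_0$, and your explicit treatment of the $k=2$ case and of the overlapping-versus-disjoint pair distinction is a bit more careful than the paper's, but the architecture of the argument is identical.
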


The interpretation of this lemma is that if $[Z(\vec x) \cdot \vec \mu - Z(\vec y) \cdot \vec \mu] \bmod 2\pi$ is nearly 0 for all $\vec x, \vec y \in V_{v,k}$, then all vector components of $\vec \mu$ are nearly constant modulo $\frac{2 \pi}{k-1}$. We note that this lemma implicitly requires that $a \neq b$ and $c \neq d$.

  As before, Lemma \ref{near_lambda_2} yields a useful corollary obtained by letting $\epsilon_0 \to 0$ and using \eqref{modulo_condition}:

\begin{cor} \label{lambda_obs_2}
  If $\vec \ell \in \Lambda_X$, then all the components of $\vec \ell$ are congruent to one another $\pmod {\frac{2 \pi}{k-1}}$.
\end{cor}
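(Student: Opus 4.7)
The plan is to build on Lemma \ref{near_lambda_1}, which states that for any distinct $a, b$ the quantity $\mu_{\{a,j\}} - \mu_{\{b,j\}}$ is nearly constant in $j$ modulo $2\pi$, with error at most $2\epsilon_0$ per comparison. The first step is to handle the ``shared-index'' case: showing that $\mu_{\{a,b\}} - \mu_{\{a,d\}}$ is near a multiple of $\frac{2\pi}{k-1}$ with error less than $2\epsilon_0$, when $a, b, d$ are distinct. I will construct two vectors $\vec x, \vec y \in V_{v,k}$ whose 1-supports agree on a common set $J \subset \{1, \ldots, v\} \setminus \{b, d\}$ of size $k - 1$ containing $a$, but differ in that $\vec x$ has its remaining 1 at position $b$ and $\vec y$ has its remaining 1 at position $d$. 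The assumption $v - k \geq 2$ guarantees that such a set $J$ exists. A direct computation analogous to the one in the proof of Lemma \ref{near_lambda_1} yields
\[ \vec \mu \cdot [Z(\vec x) - Z(\vec y)] = \sum_{j \in J} \bigl(\mu_{\{b,j\}} - \mu_{\{d,j\}}\bigr). \]

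By the hypothesis, the left-hand side equals $2\pi n + \delta$ with $|\delta| < \epsilon_0$. By Lemma \ref{near_lambda_1}, for each of the $k-2$ indices $j \in J \setminus \{a\}$ the term $\mu_{\{b,j\}} - \mu_{\{d,j\}}$ differs from $\mu_{\{b,a\}} - \mu_{\{d,a\}} = \mu_{\{a,b\}} - \mu_{\{a,d\}}$ by a multiple of $2\pi$ up to an error bounded by $2\epsilon_0$; the term at $j = a$ is exact. Summing and applying the triangle inequality gives
\[ (k-1)\bigl(\mu_{\{a,b\}} - \mu_{\{a,d\}}\bigr) = 2\pi n' + \delta', \qquad |\delta'| < \epsilon_0 + 2(k-2)\epsilon_0 = (2k-3)\epsilon_0. \]
Dividing by $k-1$ yields the claim with error less than $\frac{2k-3}{k-1}\epsilon_0 \leq 2\epsilon_0$ for all $k \geq 2$.

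For the general statement, the case $\{a,b\} = \{c,d\}$ is trivial. The case in which $\{a,b\} \cap \{c,d\}$ has exactly one element is handled directly by the previous paragraph (after an obvious relabeling). The remaining case is when $\{a,b\} \cap \{c,d\} = \emptyset$, so that $a, b, c, d$ are all distinct; here I will chain two shared-index identities through an intermediate pair, writing
\[ \mu_{\{a,b\}} - \mu_{\{c,d\}} = \bigl(\mu_{\{a,b\}} - \mu_{\{b,c\}}\bigr) + \bigl(\mu_{\{b,c\}} - \mu_{\{c,d\}}\bigr), \]
where each bracketed difference shares one index. Applying the shared-index case to each parenthesized difference produces expressions of the form $\frac{2\pi z_i}{k-1} + \epsilon_i$ with $|\epsilon_i| < 2\epsilon_0$, and adding them yields the desired form with total error less than $4\epsilon_0$.

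I expect the main technical obstacle to be the careful bookkeeping of two distinct sources of error: the single $\epsilon_0$ coming from one application of the hypothesis, and the $2\epsilon_0$ per application of Lemma \ref{near_lambda_1}. It will be important that the sum has exactly $k - 2$ approximate summands (rather than $k - 1$), which is what keeps $(2k-3)\epsilon_0 / (k-1)$ under the threshold $2\epsilon_0$ uniformly in $k$; the remaining factor of $2$ in the final bound comes precisely from the one chaining step needed in the disjoint-index case.
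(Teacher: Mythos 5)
Your argument is correct and is essentially the paper's own route: you re-derive Lemma \ref{near_lambda_2} (first the shared-index case, by summing the inner-product identity over a common support of size $k-1$ and collapsing the sum to $(k-1)[\mu_{\{a,b\}}-\mu_{\{a,d\}}]$ via Lemma \ref{near_lambda_1}, then the disjoint case by chaining two shared-index differences), exactly as the paper does, with only the cosmetic difference that you place the collapse target $a$ inside the shared support rather than permuting coordinates afterward. The only step left implicit is the final passage from the approximate statement to the exact congruence for $\vec \ell \in \Lambda_X$ --- namely invoking \eqref{modulo_condition} so that the hypothesis holds for every $\epsilon_0 > 0$ and then letting $\epsilon_0 \to 0$ --- which is precisely how the paper itself extracts the corollary from the lemma.
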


\begin{proof}[Proof of Lemma \ref{near_lambda_2}]
We define some vectors from $V_{v,k}$:
  \begin{align*}
  	\vec y_1 & = (1, 0, \overbrace{1, \dots, 1}^{k-1}, \overbrace{0, \dots, 0}^{v-k-1})^T \\
	\vec y_2 & = (0, 1, 1, \dots, 1, 0, \dots, 0)^T
  \end{align*}
  These vectors are identical except in the first two coordinates.  For any $\vec \mu$, we have
  \begin{align*}
  	\vec \mu \cdot Z(\vec y_1) & = \sum_{j=3}^{k+1} \mu_{\{1,j\}} \\
	\vec \mu \cdot Z(\vec y_2) & = \sum_{j=3}^{k+1} \mu_{\{2,j\}}
  \end{align*}
so by assumption, we then have $z \in \Z$ and $\epsilon_1 \in (-\epsilon_0, \epsilon_0)$ such that 
  \begin{align*}
    \vec \mu \cdot [Z(\vec y_1) - Z(\vec y_2)] & = \sum_{j=3}^{k+1} [\mu_{\{1,j\}} - \mu_{\{2,j\}}] \\
    & = 2 \pi z + \epsilon_1 .
  \end{align*}
Next, we fix some integer $n$ with $3 \leq n \leq v$.  For each term in the sum where $j \neq n$, we use Lemma \ref{near_lambda_1} to replace $[\mu_{\{1,j\}} - \mu_{\{2,j\}}]$ with $[\mu_{\{1,n\}} - \mu_{\{2,n\}}]$ plus an error term.  Executing this replacement for all $j$ shows that there exist $z \in \Z$ and $\epsilon_2$ with $|\epsilon_2| < 2(k-1) \epsilon_0$ such that
  \begin{equation*}
    (k-1)[\mu_{\{1,n\}} - \mu_{\{2,n\}}] = 2 \pi z + \epsilon_2 .
  \end{equation*}
Dividing by $k-1$ then shows that there exists $\epsilon_3$ with $|\epsilon_3| < 2 \epsilon_0$ such that
  \begin{equation*} 
  \mu_{\{1,n\}} - \mu_{\{2,n\}} = \frac{2\pi}{k-1} z + \epsilon_3.
  \end{equation*}

We note here that the choices of $1$ and $2$ in the coordinates of $\mu$ were merely consequences of the construction of the vectors $\vec y_1$ and $\vec y_2$.  For any distinct $f, g, h$, permuting the coordinates of those vectors appropriately (and adjusting the subsequent arguments) shows that there exist $z \in Z$ and $\epsilon_3$ with $|\epsilon_3| < 2 \epsilon_0$ such that
  \begin{equation} \label{lemmette_fuzz_3}
    \mu_{\{f,h\}} - \mu_{\{g,h\}} = \frac{2\pi}{k-1} z + \epsilon_3.
  \end{equation}

Finally, we let $a, b, c, d$ be distinct.  By applying \eqref{lemmette_fuzz_3} twice and using the triangle inequality, we see that there exist $z \in \Z$ and $\epsilon_4$ with $|\epsilon_4| < 4 \epsilon_0$ such that
  \begin{align*}
    \mu_{\{a,b\}} - \mu_{\{c,d\}} & = [\mu_{\{a,b\}} - \mu_{\{a, d\}}] + [\mu_{\{a,d\}} - \mu_{\{c,d\}}] \\
    & = \frac{2 \pi z}{k-1} + \epsilon_4
  \end{align*}
as desired.
\end{proof}

Next, we examine some ``building block'' vectors that will help to characterize the set $\Lambda_X$.  

\begin{defn} \label{defn_alpha_beta}
For a fixed $v$ and $k$ and $1 \leq a \leq v$, we define the vector $\vec \beta^a$ to be the vector with $\beta^a_{\{i,j\}} = 1$ if $i = a$ or $j = a$, and $\beta^a_{\{i,j\}}=0$ otherwise.  We also define $\vec \alpha^a = \vec 1 - \vec \beta^a$; that is, $ \alpha^a_{\{i,j\}} = 0$ if $i = a$ or $j = a$ and $ \alpha^a_{\{i,j\}} = 1$ otherwise.
\end{defn}

\begin{prop} \label{in_lambda}
  The vectors $\frac{2 \pi}{k-1} \vec \beta^a$ and $\frac{2 \pi}{k-1} \vec \alpha^a$ are in $\Lambda_X$.  Moreover, so also is $\gamma \vec 1$ for any real $\gamma$.
\end{prop}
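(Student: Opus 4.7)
The plan is to verify all three claims by directly applying the characterization in \eqref{modulo_condition}: a vector $\vec\ell$ belongs to $\Lambda_X$ precisely when $\vec\ell \cdot Z(\vec x) \bmod 2\pi$ is the same for every $\vec x \in V_{v,k}$. Since each coordinate of $Z(\vec x)$ is of the form $x_i x_j$ with $x_i \in \{0,1\}$ and $\sum x_i = k$, each dot product reduces to a simple count of how many pairs of 1's in $\vec x$ fall in a prescribed set of index pairs. The three parts of the proposition should then fall out by elementary case analysis.

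First I would dispatch the case $\vec\ell = \gamma \vec 1$: here $\vec\ell \cdot Z(\vec x) = \gamma \sum_{i<j} x_i x_j = \gamma \binom{k}{2}$ for every $\vec x \in V_{v,k}$, since $\vec x$ has exactly $k$ ones, and this is a constant (not just a constant modulo $2\pi$). Hence $\gamma \vec 1 \in \Lambda_X$ trivially.

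Next I would handle $\vec\ell = \frac{2\pi}{k-1}\vec\beta^a$. Using Definition \ref{defn_alpha_beta},
\[ \vec\ell \cdot Z(\vec x) = \frac{2\pi}{k-1} \sum_{j \neq a} x_a x_j = \frac{2\pi}{k-1}\, x_a \sum_{j \neq a} x_j. \]
Split into two cases based on the value of $x_a$. If $x_a = 0$, the expression vanishes. If $x_a = 1$, then the remaining $k-1$ ones of $\vec x$ lie among the coordinates $j \neq a$, so $\sum_{j\neq a} x_j = k-1$ and the expression equals $\frac{2\pi}{k-1}(k-1) = 2\pi \equiv 0 \pmod{2\pi}$. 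In both cases the value is $0 \bmod 2\pi$, so $\frac{2\pi}{k-1}\vec\beta^a \in \Lambda_X$.

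Finally, for $\vec\ell = \frac{2\pi}{k-1}\vec\alpha^a$, I would simply invoke $\vec\alpha^a = \vec 1 - \vec\beta^a$ together with Remark \ref{lambda_is_closed} (closure of $\Lambda_X$ under subtraction mod $2\pi$): since both $\frac{2\pi}{k-1}\vec 1$ and $\frac{2\pi}{k-1}\vec\beta^a$ have just been shown to lie in $\Lambda_X$, so does their difference. (A direct computation would also work by the same case analysis, yielding $\pi k$ when $x_a = 0$ and $\pi(k-2)$ when $x_a = 1$, which agree modulo $2\pi$.) There is no real obstacle here — the only subtlety is noticing the role of the factor $k-1$ in the denominator, which is precisely engineered so that the two possible values of $\sum_{j \neq a} x_j$ (namely $0$ and $k-1$) produce outputs that differ by an integer multiple of $2\pi$.
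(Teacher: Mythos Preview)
Your proof is correct and follows essentially the same approach as the paper: direct verification via \eqref{modulo_condition} by case analysis on whether $x_a = 0$ or $x_a = 1$. The only cosmetic difference is that for $\frac{2\pi}{k-1}\vec\alpha^a$ you invoke closure under subtraction (Remark \ref{lambda_is_closed}) rather than computing directly, whereas the paper carries out the direct computation; both arguments are equally valid and your parenthetical values $\pi k$ and $\pi(k-2)$ match the paper's $\binom{k}{2}\frac{2\pi}{k-1}$ and $[\binom{k}{2}-(k-1)]\frac{2\pi}{k-1}$.
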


\begin{proof}
  In light of \eqref{modulo_condition}, we wish to show that modulo $2 \pi$, the expressions $\frac{2 \pi}{k-1} \vec \beta^a \cdot Z(\vec x)$ and $\frac{2 \pi}{k-1} \vec \alpha^a \cdot Z(\vec x)$ do not depend on the choice of $\vec x \in V_{v,k}$.
  
Fix $a$, and let $\vec x \in V_{v,k}$ be arbitrary. A straightforward calculation shows that if $x_a = 1$, then $Z(\vec x) \cdot \frac{2 \pi}{k-1} \vec \beta^a = (k-1) \cdot \frac{2 \pi}{k-1}$, since $Z(\vec x)$ will have exactly $k-1$ coordinates of the form $\{a, \cdot\}$ whose entries are 1. On the other hand, if $x_a = 0$, then $Z(\vec x) \cdot \frac{2\pi}{k-1} \vec \beta^a = 0$. This establishes that $\frac{2 \pi}{k-1} \vec \beta^a \in \Lambda_X$. 

A similarly straightforward calculation shows that if $x_a = 1$, then $Z(\vec x) \cdot \frac{2 \pi}{k-1} \vec \alpha^a = [\binom{k}{2} - (k-1)] \frac{2\pi}{k-1}$, and if $x_a = 0$, then $Z(\vec x) \cdot \frac{2 \pi}{k-1} \vec \alpha^a = \binom{k}{2} \frac{2 \pi}{k-1}$. This shows that $\frac{2 \pi}{k-1} \vec \alpha^a \in \Lambda_X$.

Finally, for any $\vec x \in V_{v,k}$ and any $\gamma \in \R$, we have $Z(\vec x) \cdot \gamma \vec 1 = \binom{k}{2} \gamma$.
\end{proof}

Using these vectors, we arrive at the desired full characterization of $\Lambda_X$.

\begin{lemma} \label{lambda_structure}
  Let $\vec \beta^a$ and $\vec \alpha^a$ be as defined in Definition \ref{defn_alpha_beta}. Suppose that $\vec \ell \in [-\pi, \pi)^d$ and $\vec \ell \in \Lambda_X$.  Then there exist $\gamma \in [0, 2 \pi)$ and integers $m_i \in [0, k-1)$ such that
  \[ \vec \ell \equiv \gamma \vec 1 + m_1\frac{2 \pi}{k-1} \vec \alpha^1+ \sum_{j=3}^{v} m_j \frac{2 \pi}{k-1} \vec \beta^j \pmod{2 \pi}. \]
Moreover, this representation of $\vec \ell$ is unique.
\end{lemma}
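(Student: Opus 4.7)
The plan is to split the argument into existence and uniqueness, using Corollaries \ref{lambda_obs_1} and \ref{lambda_obs_2} to reduce $\vec\ell$ to an integer vector modulo $k-1$ after peeling off a continuous multiple of $\vec 1$. The building blocks $\vec\alpha^1$ and $\vec\beta^j$ for $j \geq 3$ are tailored so that their support patterns on $\{1,2\}$, $\{1,j\}$, and $\{2,j\}$ coordinates will uniquely identify $\gamma$, each $m_j$, and $m_1$, respectively.

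For existence, I would set $\gamma \in [0, 2\pi)$ to be the reduction of $\ell_{\{1,2\}}$ modulo $2\pi$ and define $\vec\ell' = \vec\ell - \gamma \vec 1$, so that $\ell'_{\{1,2\}} \equiv 0 \pmod{2\pi}$. By Corollary \ref{lambda_obs_2}, every component of $\vec\ell'$ lies in $\tfrac{2\pi}{k-1}\Z$ modulo $2\pi$, so we may write $\vec\ell' \equiv \tfrac{2\pi}{k-1}\vec n \pmod{2\pi}$ for some $\vec n \in \Z^d$, and only its residues modulo $k-1$ are relevant. I would then define $m_j := n_{\{1,j\}} \bmod (k-1)$ for $3 \leq j \leq v$ and $m_1 := (n_{\{2,j\}} - n_{\{1,j\}}) \bmod (k-1)$; the latter is well-defined (independent of $j$) by Corollary \ref{lambda_obs_1}. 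A direct inspection of the supports of $\vec\alpha^1$ and the $\vec\beta^j$ shows that the target identity holds automatically at coordinates $\{1,2\}$, $\{1,j\}$, and $\{2,j\}$ with $j \geq 3$.

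The main obstacle is verifying the target identity at the remaining coordinates $\{i,j\}$ with $3 \leq i < j$, where it becomes $n_{\{i,j\}} \equiv m_1 + m_i + m_j \pmod{k-1}$. Substituting the definitions of $m_1, m_i, m_j$ and using $n_{\{1,2\}} \equiv 0 \pmod{k-1}$, this reduces to the assertion $n_{\{i,j\}} - n_{\{1,j\}} \equiv n_{\{2,i\}} - n_{\{1,2\}} \pmod{k-1}$, which is precisely Corollary \ref{lambda_obs_1} applied to the pair of rows $(1, i)$ with $j$ and $2$ in the role of the varying index. This is the only step in the existence argument that requires more than bookkeeping.

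For uniqueness, suppose two decompositions yield the same $\vec\ell$ modulo $2\pi$. Letting $\delta = \gamma - \gamma' \in (-2\pi, 2\pi)$ and $n_i = m_i - m_i' \in (-(k-1), k-1)$, the difference relation $\delta \vec 1 + \tfrac{2\pi}{k-1}\bigl(n_1 \vec\alpha^1 + \sum_{j=3}^v n_j \vec\beta^j\bigr) \equiv \vec 0 \pmod{2\pi}$ can be read off at three classes of coordinates: $\{1,2\}$ forces $\delta \equiv 0 \pmod{2\pi}$ and hence $\delta = 0$; each $\{1,j\}$ with $j \geq 3$ then forces $n_j \equiv 0 \pmod{k-1}$; and finally $\{2,3\}$ forces $n_1 \equiv 0 \pmod{k-1}$. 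The range restrictions on $\gamma, \gamma'$ and on the $m_i, m_i'$ then yield $\delta = 0$ and every $n_i = 0$, completing the uniqueness argument.
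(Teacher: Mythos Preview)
Your proof is correct and follows essentially the same approach as the paper's: both peel off $\gamma = \ell_{\{1,2\}}$, invoke Corollary~\ref{lambda_obs_2} to place the residual in $\tfrac{2\pi}{k-1}\Z^d$, read off the $m_j$ from the $\{1,j\}$ coordinates, and then use Corollary~\ref{lambda_obs_1} to handle the remaining coordinates. The only organizational difference is that you define $m_1$ explicitly from the $\{2,j\}$ coordinates and verify the identity coordinatewise, whereas the paper subtracts off $\sum m_j \tfrac{2\pi}{k-1}\vec\beta^j$ first and then argues the leftover $\vec\zeta$ (still in $\Lambda_X$) is a multiple of $\vec\alpha^1$; the underlying application of Corollary~\ref{lambda_obs_1} at the general $\{i,j\}$ coordinate is identical in both.
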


\begin{rmk} \label{lambda_is_lines}
  This decomposition of $\Lambda_X (= \Lambda_Y)$ shows that the set is made up of a number of distinct $1$-dimensional sets, all of which are lines parallel to the vector $\vec 1$.  
\end{rmk}

\begin{proof}[Proof of Lemma \ref{lambda_structure}]

Let $\ell \in \Lambda_X$. Set $\gamma = \ell_{\{1,2\}}$, and set $\vec \theta = \vec \ell - \gamma \vec 1$. We note that by Remark \ref{lambda_is_closed} and Proposition \ref{in_lambda} that $\vec \theta \in \Lambda_X$. Moreover, since $\theta_{\{1,2\}} = 0$, by Corollary \ref{lambda_obs_2} we see that $\theta_{\{a,b\}} \equiv 0 \pmod {\frac{2 \pi}{k-1}}$ for all $\{a,b\}$. That is, for each $\{a,b\}$, there are unique integers $z$ and $m$ (both of which depend on $a$ and $b$) such that $m \in [0, k-1)$ and
  \begin{equation} \label{lambda_structure_accounting}
    \theta_{\{a,b\}} = 2 \pi z + \frac{2 \pi}{k-1} m.
  \end{equation}
 For $j \geq 3$, we set $m_j$ to be the integer $m$ which satisfies \eqref{lambda_structure_accounting} when $\{a,b\} = \{1,j\}$. If we set
   \[ \vec \zeta = \vec \theta - \sum_{j=3}^{v} m_j \frac{2 \pi}{k-1} \vec \beta^j,\]
then we again note by Remark \ref{lambda_is_closed} and Proposition \ref{in_lambda} that $\vec \zeta \in \Lambda_X$. We still have $\zeta_{\{a,b\}} \equiv 0 \pmod{\frac{2\pi}{k-1}}$ for all $\{a,b\}$. Hence, to complete the existence portion of the proof, it remains only to show that $\vec \zeta$ is a multiple of $\vec \alpha^1$.

For a fixed $j \geq 2$, the only vector of the set $\{\vec \beta^i: i \geq 2\}$ with a nonzero $\{1,j\}$ component is $\vec \beta^j$. Thus, $\zeta_{\{1,j\}} \equiv 0 \pmod{2 \pi}$ for all $j \geq 3$; further, since $\theta_{\{1,2\}} \equiv 0 \pmod{2 \pi}$, we have $\zeta_{\{1,j\}} \equiv 0 \pmod{2 \pi}$ as well. For $3 \leq i < j \leq v$, by Corollary \ref{lambda_obs_1} we have 
  \begin{align*}
    \zeta_{\{2,j\}} - \zeta_{\{2,3\}} &\equiv \zeta_{\{1,j\}} - \zeta_{\{1,3\}} \pmod{2 \pi}, \\
    \zeta_{\{i,j\}} - \zeta_{\{1,i\}} &\equiv \zeta_{\{2,j\}} - \zeta_{\{1,2\}} \pmod{2 \pi}.
  \end{align*}
Since $\zeta_{\{1,2\}} \equiv \zeta_{\{1,3\}} \equiv \zeta_{\{1,i\}} \equiv \zeta_{\{1,j\}} \equiv 0 \pmod{2 \pi}$, these equations imply that $\zeta_{\{i,j\}} \equiv \zeta_{\{2,j\}} \equiv \zeta_{\{2,3\}} \pmod{2 \pi}$, which shows that $\vec \zeta$ is a multiple of $\vec \alpha^1$.

Finally, we argue the uniqueness of these expressions of vectors in $\Lambda_X$. Of the collection of vectors consisting of $\vec 1, \vec \alpha^1$, and $\vec \beta^j$ with $j \geq 3$, only $\vec 1$ has a nonzero $\{1,2\}$ component; this implies the uniqueness of $\gamma$. Further, for $j \geq 3$, only $\vec 1$ and $\vec \beta^j$ have a nonzero $\{1,j\}$ component; this implies the uniqueness of $m_j$ for $j \geq 3$. The uniqueness of the final coefficient, $m_1$, follows.
\end{proof}
\section{Anatomy of the Integral} \label{S:multiset}

Having worked in the previous section to obtain a full characterization of the set $\Lambda_Y$, our next goal is to explain how we will decompose the integral in \eqref{fourier_inversion}.  The ultimate goal of this section will be to work toward the decompositions found in \eqref{integral_breakup_4} and \eqref{integral_breakup_5}.  These expression will require a good deal of technical setup.  The outline of this section is as follows: first, Lemma \ref{multiset_lemma} and Proposition \ref{bibd_conditions_walk} will explore the nature of the multi-set $\{\Phi_Y(\vec \ell)^t: \vec \ell \in \Lambda_Y\}$.  Next, we will discuss how we separate the region $[-\pi, \pi]^d$ into smaller pieces, culminating with \eqref{integral_breakup_3}.  Finally, we will combine these two ideas to obtain \eqref{integral_breakup_4} and \eqref{integral_breakup_5}.

We begin with the multi-set $\{\Phi_Y(\vec \ell)^t: \vec \ell \in \Lambda_Y\}$ and will first consider the case where $t = 1$.

\begin{lemma} \label{multiset_lemma}
Let $\vec \ell = \gamma \vec 1 + m_1 \frac{2 \pi}{k-1} \vec \alpha^1 + \sum_{j=3}^v m_j \frac{2 \pi}{k-1} \vec \beta^j$ be in $\Lambda_Y$, and define $S(\vec \ell) = m_1 - \sum_{j=3}^v m_j$.  Then $\Phi_Y(\vec \ell) = e^{ i \frac{2 \pi k}{v} S(\vec \ell)}$.
\end{lemma}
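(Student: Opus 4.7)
The plan is to factor $\Phi_Y(\vec\ell)$ across the decomposition of $\vec\ell$ from Lemma \ref{lambda_structure}, evaluate each factor using the explicit inner-product computations already carried out in the proof of Proposition \ref{in_lambda}, and then account for the drift correction that turns $\Phi_X$ into $\Phi_Y$.

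First I would note that each of $\gamma\vec 1$, $m_1\frac{2\pi}{k-1}\vec\alpha^1$, and $m_j\frac{2\pi}{k-1}\vec\beta^j$ lies in $\Lambda_X$ (by Proposition \ref{in_lambda} together with Remark \ref{lambda_is_closed}), so Proposition \ref{char_function_properties} applies iteratively and yields
\[
\Phi_Y(\vec\ell) \;=\; \Phi_Y(\gamma\vec 1)\,\Phi_Y\!\bigl(m_1\tfrac{2\pi}{k-1}\vec\alpha^1\bigr)\prod_{j=3}^{v}\Phi_Y\!\bigl(m_j\tfrac{2\pi}{k-1}\vec\beta^j\bigr).
\]
Because each building block $\vec\ell'$ is in $\Lambda_X$, the quantity $\vec\ell'\cdot Z(\vec y)$ is independent of $\vec y\in V_{v,k}$ modulo $2\pi$, so by \eqref{defn_of_y} we have $\Phi_Y(\vec\ell')=\exp\!\bigl(i\vec\ell'\cdot Z(\vec y)\bigr)\exp\!\bigl(-i\tfrac{k(k-1)}{v(v-1)}\vec 1\cdot\vec\ell'\bigr)$ for any fixed $\vec y$.

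Next I would compute the three types of factors using values already extracted in the proof of Proposition \ref{in_lambda}. For $\gamma\vec 1$: the inner product contributes $\binom{k}{2}\gamma$ and the drift contributes $-\tfrac{k(k-1)}{v(v-1)}\binom{v}{2}\gamma=-\binom{k}{2}\gamma$, so $\Phi_Y(\gamma\vec 1)=1$. For $m_j\tfrac{2\pi}{k-1}\vec\beta^j$ with $j\ge 3$: the inner-product term is an integer multiple of $2\pi$ in both cases $y_j=0,1$, so it contributes $1$, while the drift contributes $\exp\!\bigl(-i\tfrac{k(k-1)}{v(v-1)}\cdot m_j\tfrac{2\pi}{k-1}(v-1)\bigr)=\exp\!\bigl(-i\tfrac{2\pi k}{v}m_j\bigr)$, using $|\vec\beta^j|=v-1$. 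For $m_1\tfrac{2\pi}{k-1}\vec\alpha^1$: the two values of the inner product differ by $2\pi$ (for $y_1=0,1$), so working modulo $2\pi$ both give the phase $m_1\binom{k}{2}\tfrac{2\pi}{k-1}=m_1\pi k$, while the drift gives $-i\tfrac{k(k-1)}{v(v-1)}\cdot m_1\tfrac{2\pi}{k-1}\binom{v-1}{2}=-i m_1\pi k\tfrac{v-2}{v}$, using $|\vec\alpha^1|=\binom{v-1}{2}$. Combining the two exponents collapses to $im_1\tfrac{2\pi k}{v}$.

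Multiplying the three types of factors together yields
\[
\Phi_Y(\vec\ell)=1\cdot e^{i\frac{2\pi k}{v}m_1}\cdot\prod_{j=3}^{v}e^{-i\frac{2\pi k}{v}m_j}=e^{i\frac{2\pi k}{v}S(\vec\ell)},
\]
which is the claim.

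The main obstacle is the bookkeeping in the $\vec\alpha^1$ computation: one must identify $|\vec\alpha^1|=\binom{v-1}{2}$, use that the inner-product value modulo $2\pi$ simplifies to $\pi k$ regardless of whether $y_1=0$ or $1$, and then verify the key cancellation $\pi k\bigl[1-\tfrac{v-2}{v}\bigr]=\tfrac{2\pi k}{v}$. Everything else is a direct substitution of values already computed in the paper.
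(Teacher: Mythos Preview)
Your proof is correct and follows essentially the same approach as the paper: both reduce to computing $\vec 1\cdot Y_1$, $\tfrac{2\pi}{k-1}\vec\alpha^1\cdot Y_1$, and $\tfrac{2\pi}{k-1}\vec\beta^j\cdot Y_1$ modulo $2\pi$ using the values from Proposition~\ref{in_lambda} together with $\vec 1\cdot\vec\alpha^1=\binom{v-1}{2}$ and $\vec 1\cdot\vec\beta^j=v-1$. The only cosmetic difference is that you explicitly invoke the factorization via Proposition~\ref{char_function_properties}, whereas the paper simply records the three inner products with $Y_1$ directly and leaves the multiplicativity implicit.
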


\begin{proof}
  We recall three computations from the proof of Proposition \ref{in_lambda}:
    \begin{align*}
      \vec 1 \cdot X_1 &= \binom k 2 \\
      \frac{2 \pi}{k-1} \vec \alpha^1 \cdot X_1 &\equiv \binom k 2 \frac{2 \pi}{k-1} \pmod{2 \pi} \\
      \frac{2 \pi}{k-1} \vec \beta^j \cdot X_1 &\equiv 0 \pmod{2 \pi}
    \end{align*}
  We also note from Definition \ref{defn_alpha_beta} that $\vec 1 \cdot \vec \alpha^a = \binom {v-1} 2$ and $\vec 1 \cdot \vec \beta^a = v-1$.
  From these, \eqref{defn_of_y}, and Definition \ref{defn_alpha_beta}, the following are easily verified and complete the proof:
    \begin{align*}
      \vec 1 \cdot Y_1 &= 0 \\
      \frac{2 \pi}{k-1} \vec \alpha^1 \cdot Y_1 & \equiv \frac{2 \pi k}{v} \pmod{2 \pi} \\
      \frac{2 \pi}{k-1} \vec \beta^j \cdot Y_1 &\equiv - \frac{2 \pi k}{v} \pmod{2 \pi}. \qedhere
    \end{align*}
\end{proof}

Our next goal is to investigate the nature of the multi-set 
  \[ \{ \Phi_Y(\vec \ell): \vec \ell \in \Lambda_Y \}. \] 
Because $\gamma$ in Lemma \ref{lambda_structure} can be anything in the interval $[0, 2\pi)$, the set $\Lambda_Y$ is infinite. We define the set
  \begin{equation} \label{lambda_diagonal_cosets}
    \Lambda_Y^{\square} = \left \{ m_1 \frac{2 \pi}{k-1} \vec \alpha^1 + \sum_{j=3}^v m_j \frac{2 \pi}{k-1} \vec \beta^j : m_i \in \Z \cap [0, k-1)\right \}
  \end{equation}
by eliminating the $\gamma \vec 1$ component of $\Lambda_Y$.  We also define the set 
  \begin{equation} \label{def_lambda_star}
    \Lambda_Y^{\star} = \left\{ \vec \psi \in [-\pi, \pi)^d : \vec \psi \equiv \vec \psi^{\square} \pmod {2 \pi} \textrm{ for some } \vec \psi^{\square} \in \Lambda_Y^{\square} \right\}.
  \end{equation}
We note that each vector in $\Lambda_Y^{\square}$ has a unique representative in $[-\pi, \pi)^d$.  Lemma \ref{multiset_lemma} shows that for any $\vec \psi \in \Lambda_Y$ and any $\gamma$, we have $\Phi_Y(\vec \psi + \gamma \vec 1) = \Phi_Y(\vec \psi)$.  Therefore, in order to understand the nature of the multi-set  $\{\Phi_Y(\vec \psi): \vec \psi \in \Lambda_Y\}$, it suffices to consider the multi-set $\{\Phi_Y(\vec \psi): \vec \psi \in \Lambda_Y^{\star}$\}.  This is particularly useful since $\Lambda_Y$ consists of several subsets parallel to $\vec 1$, whence the set $\Lambda_Y^{\star}$ consists of one representative vector for each distinct diagonal component.  It is easy to see that
  \begin{equation} \label{counting_lambda}
    | \Lambda_Y^{\star}| = (k-1)^{v-1}.
  \end{equation}

We remark here that since 
  \[Y_t = X_t - \frac{k(k-1)}{v(v-1)} t \vec 1 \]
and $X_t \in \Z^d$, the random walk $Y_t$ is supported on the lattice $\Z^d$ if and only if $t \frac{k(k-1)}{v(v-1)} \in \Z$.  Hence, the Fourier Inversion Formula in \eqref{fourier_inversion} only applies when $t \frac{k(k-1)}{v(v-1)} \in \Z$. For values of $t$ such that this is not the case, the probability that $Y_t = \vec 0$ is trivially $0$.  This constraint that $t \frac{k(k-1)}{v(v-1)} \in \Z$ corresponds to the BIBD constraint in \eqref{BIBD_relations}.  We also note by the BIBD constraint in \eqref{BIBD_relation1} that we must have $t \frac{k}{v} \in \Z$ as well, though this requirement manifests in a more subtle way than the necessity that $t \frac{k(k-1)}{v(v-1)} \in \Z$.  For certain choices of $k$ and $v$, such as $k = 3$ and $v = 5$, it holds that $t \frac{k(k-1)}{v(v-1)} \in \Z$ implies that $t \frac{k}{v} \in \Z$.  For other choices, such as $k = 3$ and $v = 6$, this is not the case.  Our next lemma will eventually be used to show how a positive return probability of the walk $Y_t$ intrinsically requires that $t \frac{k}{v} \in \Z$.

\begin{prop} \label{bibd_conditions_walk}
Suppose $t \frac{k(k-1)}{v(v-1)} \in \Z$.  
  \begin{itemize}
    \item If $t \frac{k}{v} \in \Z$, then the multi-set $\{ \Phi_Y(\vec \psi)^t: \vec \psi \in \Lambda_Y^{\star} \}$ consists only of the number $1$, repeated $(k-1)^{v-1}$ times.
    \item If $t \frac{k}{v} \not \in \Z$, then the multi-set $\{ \Phi_Y(\vec \psi)^t: \vec \psi \in \Lambda_Y^{\star} \}$ consists of all the powers of a certain root of unity, each appearing the same number of times; consequently, the sum of these roots is zero.
   \end{itemize}
\end{prop}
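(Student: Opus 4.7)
The plan is to unpack the multi-set by applying Lemma \ref{multiset_lemma} directly and then performing a modular-arithmetic analysis of the exponents. By that lemma, for $\vec\psi \in \Lambda_Y^{\star}$ we have
\[ \Phi_Y(\vec\psi)^t = \exp\!\left(\tfrac{2\pi i \, kt}{v} S(\vec\psi)\right), \]
where $S(\vec\psi) = m_1 - \sum_{j=3}^{v} m_j$ and each $m_i$ ranges independently over $\{0,1,\dots,k-2\}$. The first bullet is then immediate: if $kt/v \in \Z$, every exponent is an integer multiple of $2\pi i$, so each of the $(k-1)^{v-1}$ elements of $\Lambda_Y^{\star}$ (cf.\ \eqref{counting_lambda}) contributes the value $1$.

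For the second bullet, I would write $kt/v = a/q$ in lowest terms with $q > 1$, so that $\omega := e^{2\pi i a/q}$ is a primitive $q$-th root of unity and $\Phi_Y(\vec\psi)^t = \omega^{S(\vec\psi)}$. The central arithmetic step is to show that $q \mid k-1$. Writing $v = qg$ and $kt = ag$ with $\gcd(a,q)=1$, the hypothesis $tk(k-1)/(v(v-1)) \in \Z$ rearranges to $q(qg-1) \mid a(k-1)$; since $\gcd(q, qg-1) = \gcd(q,a) = 1$, this forces $q \mid k-1$. This divisibility is really the heart of the proof and will be the main obstacle, in the sense that everything else follows from it mechanically.

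Once $q \mid k-1$ is established, each $m_i \in \{0,1,\dots,k-2\}$ runs through a complete collection of residues modulo $q$, hitting each class exactly $(k-1)/q$ times. Since the $m_i$ are independent, the induced distribution of $S(\vec\psi) \bmod q$ on $\Lambda_Y^{\star}$ is uniform on $\Z/q\Z$, so each residue class is attained by exactly $(k-1)^{v-1}/q$ vectors. Therefore the multi-set $\{\Phi_Y(\vec\psi)^t : \vec\psi \in \Lambda_Y^{\star}\}$ consists of the $q$ distinct powers $1, \omega, \omega^2, \dots, \omega^{q-1}$, each occurring $(k-1)^{v-1}/q$ times. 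Because $q > 1$, summing the $q$-th roots of unity (each with equal multiplicity) yields zero, completing the proof. No genuinely hard computation is involved beyond the divisibility observation above, so the writeup should be short once that lemma-style step is isolated.
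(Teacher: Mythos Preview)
Your proof is correct and follows essentially the same approach as the paper's: both invoke Lemma~\ref{multiset_lemma} to reduce to analyzing $S(\vec\psi)$ modulo the reduced denominator of $tk/v$, establish that this denominator divides $k-1$, and then use uniformity of $S(\vec\psi)$ over residue classes. The only cosmetic difference is that the paper obtains the divisibility $b \mid k-1$ by first rewriting $tk/v = j(v-1)/(k-1)$ with $j = tk(k-1)/(v(v-1)) \in \Z$ (so the denominator $k-1$ is already visible before reducing), whereas you extract $q \mid k-1$ by a direct $\gcd$ argument; both are equally short.
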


\begin{proof}
  
Suppose that $t \frac{k}{v} \in \Z$.  By Lemma \ref{multiset_lemma}, we have
  \[ \Phi_Y(\vec \psi)^t = e^{i 2 \pi t \frac{k}{v} S(\vec \psi)}\]
and since $t \frac{k}{v} S(\vec \psi) \in \Z$, it follows that $\Phi_Y(\vec \psi)^t = 1$ for all $\vec \psi \in \Lambda_Y$.

Next, suppose that $t \frac{k}{v} \not \in \Z$, but that $t \frac{k(k-1)}{v(v-1)} = j$ with $j \in \Z$.  In this case, we have $t \frac{k}{v} = \frac{j(v-1)}{k-1}$.  We can express this in a reduced form; i.e. $t \frac{k}{v} = \frac{a}{b}$ with $b | (k-1)$, $b \neq 1$, and $a$ relatively prime to $b$.  By examining \eqref{lambda_diagonal_cosets}, we see that the multi-set $\{S(\vec \psi) \bmod (k-1): \psi \in \Lambda_Y^{\star} \}$ consists of the numbers in $\{0, \dots, k-2\}$, counted $(k-1)^{v-2}$ times each.  Since 
  \[\Phi_Y(\vec \psi) = \exp \left( 2 \pi i \frac{a}{b} S(\vec \psi) \right)  \]
and $b | (k-1)$, it follows that the multi-set $ \{\Phi_Y(\vec \psi): \vec \psi \in \Lambda_Y^{\star} \}$ consists of all the $b^{th}$ roots of unity, each having the same number of appearances.
\end{proof}

We now seek to break up the integral $(2 \pi)^{-d} \int_{[-\pi, \pi]^d} \Phi_Y(\vec \theta)^t \ud \vec \theta$ into manageable pieces.  We define the set
  \begin{equation} \label{defn_lambda_not}
    \Lambda_0 = \left\{\vec \ell \in \R^d : \ell_{\{a,b\}} \equiv \ell_{\{c,d\}} \pmod {2\pi/(k-1)} \textrm{ for all } a, b, c, d  \right\}
  \end{equation}
and note by Corollary \ref{lambda_obs_2} that $\Lambda_X \subset \Lambda_0$. 

\begin{defn} \label{R_sets}
For $\delta > 0$, we partition $\R^d$ into three regions:
  \begin{align*}
    R_A^{\delta} & = \{ \vec \ell + \vec \zeta: \vec \ell \in \Lambda_X \textrm{ and } |\zeta_{\{i,j\}}| < \delta \textrm{ for all } i,j \} \\
    R_B^{\delta} & = \{ \vec \ell + \vec \zeta : \vec \ell \in \Lambda_0 \setminus \Lambda_X \textrm{ and } |\zeta_{\{i,j\}}| < \delta \textrm{ for all } i,j\} \\
    R_C^{\delta} & = \R^d \setminus (R_A^{\delta} \cup R_B^{\delta}).
  \end{align*}
\end{defn}

The idea is that $R_A^{\delta}$ is the region close to $\Lambda_X$. The set $R_B^{\delta}$ is the region which is close to satisfying the modular condition in \eqref{defn_lambda_not} but is not close to $\Lambda_X$. Finally, $R_C^{\delta}$ is the region which is far from satisfying the modular condition. Since $\Lambda_X$ is where the characteristic function has $|\Phi_Y(\vec \ell)| = 1$, only $R_A^{\delta}$ should significantly contribute to the integral in \eqref{fourier_inversion}, while the other terms should become negligible for sufficiently large $t$.

What follows are some technical observations about these newly-defined sets.

\begin{lemma} \label{differences_of_lambda_not}
  Suppose $\delta < \frac{\pi}{2(k-1)}$ and that $\vec \mu^1, \vec \mu^2 \in R_A^{\delta} \cup R_B^{\delta}$ have $\vec \mu^1 \equiv \vec \mu^2 \pmod{2 \pi}$. Let $\vec \mu^1 = \vec \ell^1 + \vec \zeta^1$ and $\vec \mu^2 = \vec \ell^2 + \vec \zeta^2$ as in Definition \ref{R_sets}. Then $\vec \ell^1 - \vec \ell^2$ is equivalent to a scalar multiple of $\vec 1 \pmod{2\pi}$.
\end{lemma}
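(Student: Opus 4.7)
My plan is to reduce the statement to a quantitative bound on the coordinate differences of $\vec \ell^1 - \vec \ell^2$. The starting observation is that, by Definition~\ref{R_sets}, we have $\vec \ell^1, \vec \ell^2 \in \Lambda_X \cup (\Lambda_0 \setminus \Lambda_X) = \Lambda_0$, so for any two index pairs $\{a,b\}$ and $\{c,d\}$, both $\ell^1_{\{a,b\}} - \ell^1_{\{c,d\}}$ and $\ell^2_{\{a,b\}} - \ell^2_{\{c,d\}}$ are integer multiples of $\frac{2\pi}{k-1}$ by \eqref{defn_lambda_not}. Consequently, so is their difference, i.e.\ $(\vec \ell^1 - \vec \ell^2)_{\{a,b\}} - (\vec \ell^1 - \vec \ell^2)_{\{c,d\}} \in \frac{2\pi}{k-1}\Z$.

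Next, I would use the hypothesis $\vec \mu^1 \equiv \vec \mu^2 \pmod{2\pi}$ to write $\vec \ell^1 - \vec \ell^2 = 2 \pi \vec n - (\vec \zeta^1 - \vec \zeta^2)$ for some integer vector $\vec n \in \Z^d$. Subtracting the $\{a,b\}$ and $\{c,d\}$ coordinates of this identity shows that the quantity above is congruent, modulo $2\pi$, to $-[(\zeta^1 - \zeta^2)_{\{a,b\}} - (\zeta^1 - \zeta^2)_{\{c,d\}}]$, whose absolute value is bounded by $4\delta$ via the triangle inequality (since each $|\zeta^i_{\{i,j\}}| < \delta$).

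The crucial step is then to combine the two facts. The difference $(\vec \ell^1 - \vec \ell^2)_{\{a,b\}} - (\vec \ell^1 - \vec \ell^2)_{\{c,d\}}$ lies in $\frac{2\pi}{k-1}\Z$, and its representative modulo $2\pi$ has absolute value less than $4\delta$. Since $\delta < \frac{\pi}{2(k-1)}$, this representative has absolute value strictly less than $\frac{2\pi}{k-1}$. But the only element of $\frac{2\pi}{k-1}\Z$ whose representative modulo $2\pi$ has absolute value $<\frac{2\pi}{k-1}$ is $0 \pmod{2\pi}$. Hence $(\vec \ell^1 - \vec \ell^2)_{\{a,b\}} \equiv (\vec \ell^1 - \vec \ell^2)_{\{c,d\}} \pmod{2\pi}$ for every pair of indices, which means $\vec \ell^1 - \vec \ell^2$ is equivalent to a scalar multiple of $\vec 1$ modulo $2\pi$.

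The main (and only) real obstacle is pinning down why the hypothesis $\delta < \frac{\pi}{2(k-1)}$ is sharp for this pigeonhole style argument: the factor $4$ in the bound $4\delta$ comes from expanding two differences of pairs $\vec \zeta^1 - \vec \zeta^2$, and the threshold $\frac{2\pi}{k-1}$ is the minimum nonzero spacing in $\frac{2\pi}{k-1}\Z$ modulo $2\pi$. Once this quantitative matchup is identified, the rest is a one-line deduction from $\Lambda_X \subset \Lambda_0$ and the elementary identity $\vec \ell^1 - \vec \ell^2 = 2\pi \vec n - (\vec \zeta^1 - \vec \zeta^2)$.
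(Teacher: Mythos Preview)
Your proposal is correct and follows essentially the same argument as the paper's proof: both observe that $\vec\ell^1-\vec\ell^2$ has coordinate differences lying in $\tfrac{2\pi}{k-1}\Z$ (via membership in $\Lambda_0$), that these coordinate differences are congruent modulo $2\pi$ to quantities bounded in absolute value by $4\delta<\tfrac{2\pi}{k-1}$, and then conclude by the pigeonhole step that each such difference is $\equiv 0\pmod{2\pi}$. The only cosmetic distinction is that the paper phrases the first step as ``$\vec\theta=\vec\ell^1-\vec\ell^2\in\Lambda_0$'' while you derive the $\tfrac{2\pi}{k-1}\Z$ membership directly from the two separate inclusions $\vec\ell^1,\vec\ell^2\in\Lambda_0$; these are equivalent.
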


\begin{proof}
We first note that $\vec \ell^1 - \vec \ell^2 \equiv \vec \zeta^2 - \vec \zeta^1 \pmod{2 \pi}$. If we set $\vec \theta = \vec \ell^1 - \vec \ell^2$, then since both $\vec \ell^1$ and $\vec \ell^2$ are in $\Lambda_0$, so also is $\vec \theta$. Let $a, b, c, d$ be arbitrary. Since $\vec \theta \in \Lambda_0$, it follows that $\theta_{\{a,b\}} - \theta_{\{c,d\}}$ is a multiple of $\frac{2 \pi}{k-1}$. On the other hand,
  \[ |\zeta^2_{\{a,b\}} - \zeta^1_{\{a,b\}} - \zeta^2_{\{c,d\}} + \zeta^1_{\{c,d\}}| < \frac{2\pi}{k-1}\]
by the triangle inequality. Since the term inside the absolute values is equivalent to $\theta_{\{a,b\}} - \theta_{\{c,d\}}$ (modulo $2 \pi$), it follows that $\theta_{\{a,b\}} - \theta_{\{c,d\}} \equiv 0 \pmod{2 \pi}.$ The fact that this holds for all coordinates implies that $\vec \theta$ is equivalent to a multiple of $\vec 1 \pmod{2 \pi}$.
\end{proof}

\begin{cor} \label{disjoint_regions}
  If $\delta < \frac{\pi}{2(k-1)}$, the regions $R^{\delta}_A$ and $R^{\delta}_B$ are disjoint.
\end{cor}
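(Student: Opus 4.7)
The plan is a short proof by contradiction that leans entirely on Lemma \ref{differences_of_lambda_not}. Suppose for contradiction that some $\vec \mu \in R_A^{\delta} \cap R_B^{\delta}$. Then $\vec \mu$ admits two decompositions $\vec \mu = \vec \ell^1 + \vec \zeta^1 = \vec \ell^2 + \vec \zeta^2$ with $\vec \ell^1 \in \Lambda_X$, $\vec \ell^2 \in \Lambda_0 \setminus \Lambda_X$, and all $|\zeta^j_{\{i,k\}}| < \delta$. Taking $\vec \mu^1 = \vec \mu^2 = \vec \mu$ makes the hypothesis $\vec \mu^1 \equiv \vec \mu^2 \pmod{2\pi}$ of Lemma \ref{differences_of_lambda_not} trivially true, and $\delta < \pi/(2(k-1))$ is in force by assumption. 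Applying the lemma gives $\vec \ell^1 - \vec \ell^2 \equiv \gamma \vec 1 \pmod{2\pi}$ for some $\gamma \in \R$.

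The remaining step is to show that this conclusion forces $\vec \ell^2$ to be equivalent modulo $2\pi$ to an element of $\Lambda_X$, in contradiction with $\vec \ell^2 \in \Lambda_0 \setminus \Lambda_X$. Proposition \ref{in_lambda} supplies $\gamma \vec 1 \in \Lambda_X$ for every scalar $\gamma$, and Remark \ref{lambda_is_closed} says $\Lambda_X$ is closed under subtraction modulo $2\pi$. Therefore the unique representative of $\vec \ell^1 - \gamma \vec 1$ in $[-\pi, \pi)^d$ lies in $\Lambda_X$. Since $\vec \ell^2$ shares that representative modulo $2\pi$, and since the membership condition $|\Phi_X(\cdot)| = 1$ defining $\Lambda_X$ is itself $2\pi$-periodic, this contradicts $\vec \ell^2 \in \Lambda_0 \setminus \Lambda_X$ (read in the natural modulo-$2\pi$ sense that is implicit in how the regions $R_A^\delta, R_B^\delta$ partition space).

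The only genuinely substantive step is invoking Lemma \ref{differences_of_lambda_not}, which does all the heavy lifting. After that, everything is bookkeeping about the diagonal-translation invariance of $\Lambda_X$; the only potential pitfall is confusing the literal set $\Lambda_X \subset [-\pi, \pi]^d$ with its $2\pi$-periodic extension, and handling this correctly is a matter of careful but routine phrasing rather than a real obstacle.
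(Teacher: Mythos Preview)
Your proof is correct and mirrors the paper's own argument essentially line for line: both proceed by contradiction, invoke Lemma \ref{differences_of_lambda_not} to conclude that $\vec \ell^1 - \vec \ell^2$ is congruent to a multiple of $\vec 1$, and then use Proposition \ref{in_lambda} together with Remark \ref{lambda_is_closed} to force $\vec \ell^2$ into $\Lambda_X$, contradicting $\vec \ell^2 \in \Lambda_0 \setminus \Lambda_X$. Your explicit remark about the $2\pi$-periodicity of the defining condition for $\Lambda_X$ is a careful touch that the paper leaves implicit.
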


\begin{proof}
  Suppose, by way of contradiction, that $R^{\delta}_A$ and $R^{\delta}_B$ are not disjoint. Then there are vectors $\vec \ell^1, \vec \ell^2, \vec \zeta^1, \vec \zeta^2$ such that $\vec \ell^1 + \vec \zeta^1 \equiv \vec \ell^2 + \vec \zeta^2 \pmod{2 \pi}$ with $ \vec \ell^1 \in \Lambda_X, \vec \ell^2 \in \Lambda_0 \setminus \Lambda_X$, and $|\zeta^{i}_{\{a,b\}}| < \delta$ for $i = 1, 2$ and all choices of $a, b$.  From Lemma \ref{differences_of_lambda_not} we see that $\vec \ell^1 - \vec \ell^2$ is equivalent to a scalar multiple of $\vec 1$, which is necessarily in $\Lambda_X$ (Proposition \ref{in_lambda}). But since $\Lambda_X$ is closed under subtraction (Remark \ref{lambda_is_closed}), it cannot hold that $\vec \ell^1 - \vec \ell^2 \in \Lambda_X$, yielding a contradiction.
\end{proof}

\begin{cor} \label{lambda_tubes_disjoint}
  Suppose $\delta < \frac{\pi}{2(k-1)}$ and that we have $\vec \mu^1, \vec \mu^2 \in R^{\delta}_A$ with $\vec \mu^1 \equiv \vec \mu^2 \pmod {2 \pi}$. Let $\vec \mu^1 = \vec \ell^1 + \vec \zeta^1$ and $\vec \mu^2 = \vec \ell^2 + \vec \zeta^2$, and using the notation of Lemma \ref{lambda_structure} let $\vec \ell^1$ be defined (modulo $2\pi$) by coefficients $\gamma^1, m_i^1$ and let $\vec \ell^2$ be defined (modulo $2\pi$) by coefficients $\gamma^2, m_i^2$.  Then for all $i$, it must follow that $m_i^1 = m_i^2$.
\end{cor}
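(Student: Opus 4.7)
The plan is to combine Lemma \ref{differences_of_lambda_not} with the uniqueness clause of Lemma \ref{lambda_structure}. Since $\vec\mu^1, \vec\mu^2 \in R_A^{\delta}$ and $\vec\mu^1 \equiv \vec\mu^2 \pmod{2\pi}$, Lemma \ref{differences_of_lambda_not} produces a scalar $c$ with $\vec\ell^1 - \vec\ell^2 \equiv c\vec 1 \pmod{2\pi}$. Substituting the two decompositions of $\vec\ell^1$ and $\vec\ell^2$ from Lemma \ref{lambda_structure} then yields
\[ c\,\vec 1 \;\equiv\; (\gamma^1 - \gamma^2)\vec 1 + (m_1^1 - m_1^2)\frac{2\pi}{k-1}\vec\alpha^1 + \sum_{j=3}^{v} (m_j^1 - m_j^2)\frac{2\pi}{k-1}\vec\beta^j \pmod{2\pi}. \]

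From here I would read off the identities one coordinate at a time, using the explicit coordinate patterns of $\vec\alpha^1$ and the $\vec\beta^j$ from Definition \ref{defn_alpha_beta}. The $\{1,2\}$ coordinate of both $\vec\alpha^1$ and every $\vec\beta^j$ (for $j \geq 3$) is $0$, so that coordinate gives $c \equiv \gamma^1 - \gamma^2 \pmod{2\pi}$. For each fixed $j \geq 3$, in the $\{1,j\}$ coordinate only $\vec 1$ and $\vec\beta^j$ contribute, so the resulting identity reduces to $(m_j^1 - m_j^2)\tfrac{2\pi}{k-1} \equiv 0 \pmod{2\pi}$; since $m_j^s \in [0,k-1)$, this upgrades to $m_j^1 = m_j^2$. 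Finally, applying the identity in any coordinate $\{2,b\}$ with $b \geq 3$, the surviving $\vec\beta^j$ contributions vanish by the preceding step and $\vec\alpha^1_{\{2,b\}} = 1$, leaving $(m_1^1 - m_1^2)\tfrac{2\pi}{k-1} \equiv 0 \pmod{2\pi}$ and hence $m_1^1 = m_1^2$.

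I do not anticipate any real obstacle; the argument is essentially the uniqueness half of Lemma \ref{lambda_structure} applied to $\vec\ell^1 - \vec\ell^2$, which happens to be a scalar multiple of $\vec 1$ modulo $2\pi$ and therefore must have all of its ``non-diagonal'' coefficients equal to zero in its unique decomposition. The one point requiring care is the interpretation of the phrase ``defined modulo $2\pi$'' in the statement: Lemma \ref{lambda_structure} only produces coefficients for the $[-\pi,\pi)^d$-representative of $\vec\ell^s$, so one must work with these representatives when invoking uniqueness, which is what makes the congruences $m_i^1 \equiv m_i^2 \pmod{k-1}$ upgrade cleanly to equalities.
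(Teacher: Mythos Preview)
Your proposal is correct and follows essentially the same route as the paper: invoke Lemma~\ref{differences_of_lambda_not} to obtain $\vec\ell^1 \equiv \vec\ell^2 + c\vec 1 \pmod{2\pi}$, then appeal to the uniqueness clause of Lemma~\ref{lambda_structure}. The paper's proof is the one-line version of what you wrote; your coordinate-by-coordinate reading is just an explicit re-derivation of that uniqueness argument, which you yourself note.
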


\begin{proof}
  Since $\vec \ell^1 = \vec \ell^2 + c \vec 1$ for some multiple $c$, this follows immediately from the uniqueness of the coefficients in Lemma \ref{lambda_structure}.
\end{proof}

\begin{rmk}
  The purpose of this corollary is to show that while expressions of vectors in $R_A^{\delta}$ are certainly not unique, they are unique up to the diagonal components of $\Lambda_X$, which are determined by the coefficients $m_i$.  We will eventually want to decompose $R_A^{\delta}$ into a collection of tubes, and it will be important that these tubes are disjoint, which is what is proved by this lemma.
\end{rmk}

We now discuss the full anatomy of the integral used in the Fourier inversion formula.  For convenience of notation, we define 
\[ I_{v,k}(t) = (2 \pi)^{-d} \int_{[-\pi, \pi]^d} \Phi_Y(\vec \theta)^t \ud \vec \theta \, .\]
Here, the parameter $v$ is implicitly involved in determining $d = \binom{v}{2}$, and both $v$ and $k$ are used implicitly to define the walk $Y_t$. We define the following sets to discuss the integral:
  \begin{align*}
    R_{A, \equiv}^{\delta} &= \{ \vec \mu : \vec \mu \in [-\pi, \pi)^d \textrm{ and } \vec \mu \equiv \vec \ell \pmod{2 \pi} \textrm{ for some } \vec \ell \in R_A^{\delta} \} \\
    R_{B, \equiv}^{\delta} &= \{ \vec \mu : \vec \mu \in [-\pi, \pi)^d \textrm{ and } \vec \mu \equiv \vec \ell \pmod{2 \pi} \textrm{ for some } \vec \ell \in R_B^{\delta} \} \\
    R_{C, \equiv}^{\delta} &= \{ \vec \mu : \vec \mu \in [-\pi, \pi)^d \textrm{ and } \vec \mu \equiv \vec \ell \pmod{2 \pi} \textrm{ for some } \vec \ell \in R_C^{\delta} \} \\    
  \end{align*}
Since any vector whose entries are all multiples of $2\pi$ is in $\Lambda_X$ (hence, $\Lambda_0$), and since both sets are closed under subtraction, it follows that $R_{A, \equiv}^{\delta} \subset R_A^{\delta}$, $R_{B, \equiv}^{\delta} \subset R_B^{\delta}$, and $R_{C, \equiv}^{\delta} \subset R_C^{\delta}$.
When $\delta < \frac{\pi}{2(k-1)}$, by Corollary \ref{disjoint_regions} we have
  \begin{equation} \label{integral_breakup_1}
    (2 \pi)^d I_{v,k} (t) = \int_{R_{A, \equiv}^{\delta}} \Phi_Y (\vec \theta)^t \ud \vec \theta +  \int_{R_{B, \equiv}^{\delta} \cup R_{C, \equiv}^{\delta}} \Phi_Y (\vec \theta)^t \ud \vec \theta 
  \end{equation}
which is motivated by segregating the region where $|\Phi_Y(\vec \theta)^t|$ is close to $1$ (that is, $R_A^{\delta}$) from those where it is not.  

To further analyze the integral over $R_{A, \equiv}^{\delta}$, we recall from Remark \ref{lambda_is_lines} that $\Lambda_Y (= \Lambda_X)$ consists of a disjoint union of dimension $1$ subsets of $[-\pi, \pi]^d$, all parallel to the vector $\vec 1$.  Accordingly, the region $R^{\delta}_A$ consists of a disjoint union of `tubes' surrounding lines parallel to the vector $\vec 1$.  We formalize this notion by defining the following subsets of the equivalence classes $[-\pi, \pi)^d$. Let $\vec \psi$ be a fixed vector in $\Lambda_Y^{\star}$, as defined in \eqref{def_lambda_star}:
  \begin{equation} \label{def_tube_lambda}
    T_{\vec \psi}^{\delta} = \{ \vec \mu: \vec \mu \in [-\pi, \pi)^d \textrm{ and } \vec \mu \equiv \vec \psi + \gamma \vec 1 +  \vec \zeta \pmod{2 \pi} \textrm{ where } \gamma \in [0, 2\pi) \textrm{ and } |\zeta_{\{i,j\}}| < \delta \textrm{ for all } \{i, j\} \} .
  \end{equation}
 This definition sets $T^{\delta}_{\vec \psi}$ as the `tube' in $[-\pi, \pi)^d$ that contains the vector $\vec \psi$ (modulo $2 \pi$). 
 
 From here, we can re-express $R_{A, \equiv}^{\delta}$ as a union of the $T_{\vec \psi}^{\delta}$ pieces: namely, 
 \begin{equation}  \label{tube_decomposition}
    R_{A, \equiv}^{\delta} = \bigcup_{\vec \psi \in \Lambda_Y^{\star}}T_{\vec \psi}^{\delta}.
 \end{equation}
If $\delta < \frac{\pi}{2(k-1)}$, then this union is disjoint, because if $\vec \psi^1 + \gamma^1 \vec 1 + \vec \zeta^1 \equiv \vec \psi^2 + \gamma^2 \vec 1 + \vec \zeta^2 \pmod{2 \pi}$ with $\vec \psi^a \in \Lambda_Y^{\star}$, $\gamma^a \in [0, 2 \pi)$, and $|\zeta^a_{\{i,j\}}|< \delta$, then Corollary \ref{lambda_tubes_disjoint} and the uniqueness of the coefficients $m_i$ in Lemma \ref{lambda_structure} imply that $\vec \psi^1 = \vec \psi^2$. We recall from \eqref{counting_lambda} that $|\Lambda_Y^{\star}| = (k-1)^{v-1}$.

We now use \eqref{tube_decomposition} to reconsider the integral in \eqref{integral_breakup_1}, which yields
  \begin{equation} \label{integral_breakup_2}
   (2 \pi)^d I_{v,k}(t) = \sum_{\vec \psi \in \Lambda_Y^{\star}} \int_{T_{\vec \psi}^{\delta}} \Phi_Y(\vec \theta)^t \ud \vec \theta + \int_{R_{B, \equiv}^{\delta} \cup R_{C, \equiv}^{\delta}} \Phi_Y (\vec \theta)^t \ud \vec \theta .
  \end{equation}
We note that $\vec 0 \in \Lambda_Y^{\star}$ and so we consider the nonzero vectors $\vec \psi \in \Lambda_Y^{\star}$. If  $\vec \theta = \vec \psi + \gamma \vec 1 + \vec \zeta$, then since $\Phi_Y(\gamma \vec 1) = 1$ as implied by the proof of Lemma \ref{multiset_lemma}, Proposition \ref{char_function_properties} shows that
  \begin{equation*}
    \Phi_Y(\vec \theta) = \Phi_Y(\vec \psi) \Phi_Y(\vec \zeta).
  \end{equation*}
  Hence, it follows that 
  \begin{equation*}
     \int_{T_{\vec \psi}^{\delta}} \Phi_Y(\vec \theta) \ud \vec \theta = \Phi_Y(\vec \psi) \int_{T^{\delta}_{\vec 0}} \Phi_Y(\vec \theta) \ud \vec \theta
  \end{equation*}
whence \eqref{integral_breakup_2} becomes
  \begin{equation} \label{integral_breakup_3}
    (2 \pi)^d I_{v,k}(t) = \left(\sum_{\vec \psi \in \Lambda_Y^{\star}} \Phi_Y(\vec \psi)^t\right)\int_{T_{\vec 0}^{\delta}} \Phi_Y(\vec \theta)^t \ud \vec \theta + \int_{R_{B, \equiv}^{\delta} \cup R_{C, \equiv}^{\delta}} \Phi_Y (\vec \theta)^t \ud \vec \theta.
  \end{equation}
  
Finally, we note by Proposition \ref{bibd_conditions_walk} that if $t \frac{k(k-1)}{v(v-1)} \in \Z$ but $t \frac{k}{v} \not \in \Z$, then the sum in the parentheses of \eqref{integral_breakup_3} is $0$ and we have
  \begin{equation} \label{integral_breakup_4}
    (2 \pi)^d I_{v,k}(t) = \int_{R_{B, \equiv}^{\delta} \cup R_{C, \equiv}^{\delta}} \Phi_Y (\vec \theta)^t \ud \vec \theta.
  \end{equation}
 On the other hand, if $t \frac{k(k-1)}{v(v-1)} \in \Z$ and $t \frac{k}{v} \in \Z$, then by Proposition \ref{bibd_conditions_walk}, \eqref{integral_breakup_3} becomes
  \begin{equation} \label{integral_breakup_5}
    (2 \pi)^d I_{v,k}(t) = (k-1)^{v-1} \int_{T_{\vec 0}^{\delta}} \Phi_Y(\vec \theta)^t \ud \vec \theta + \int_{R_{B, \equiv}^{\delta} \cup R_{C, \equiv}^{\delta}} \Phi_Y (\vec \theta)^t \ud \vec \theta.
  \end{equation}
Later, we will allow $t$ and $\delta$ to vary in a certain way together, so that the integral over $R_{B, \equiv}^{\delta} \cup R_{C, \equiv}^{\delta}$ approaches zero in both \eqref{integral_breakup_4} and \eqref{integral_breakup_5}.  This corresponds to the fact that a balanced incomplete block design cannot exist unless $t \frac{k}{v} \in \Z$, which is shown by \eqref{BIBD_relation1}.


\section{Bounds Far from the Maximal Set} \label{S:upper_bounds}

Having established our decomposition of the integral, we now desire to estimate the integral terms that appear in \eqref{integral_breakup_4} and \eqref{integral_breakup_5}.  The region $R_A$ is the set that is ``near'' $\Lambda_X$ and will contribute the bulk of the integral, so our goal is to provide upper bounds for the integrand on the regions $R_B^{\delta}$ and $R_C^{\delta}$ to show that their contribution is negligible when compared to that of $R_A^{\delta}$.  We begin with the integrand on the region $R_B^{\delta}$.

\begin{lemma} \label{region_b_estimate}
  Suppose $\delta < k^{-2}\binom{v}{k}^{-2} \left[ \frac{1}{6 \cdot 96^2} \left( \frac{2\pi}{k-1} \right)^4 \right]$.  Then if $\vec \mu \in R_B^{\delta}$, we have
  \[ |\Phi_X(\vec \mu)| \leq 1 - \binom{v}{k}^{-1} \left[ \frac{1}{96} \left( \frac{2 \pi}{k-1} \right)^2 \right].    \]
\end{lemma}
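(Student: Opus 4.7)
The plan is to exploit the fact that every $\vec \mu \in R_B^\delta$ admits a decomposition $\vec \mu = \vec \ell + \vec \zeta$ with $\vec \ell \in \Lambda_0 \setminus \Lambda_X$ and $\|\vec \zeta\|_\infty < \delta$, and to use the containment $\vec \ell \in \Lambda_0$ together with $\vec \ell \notin \Lambda_X$ to produce two specific vectors in $V_{v,k}$ whose contributions to $\Phi_X(\vec \mu)$ partially cancel. The bound will then come from a direct triangle-inequality estimate on $\Phi_X(\vec \mu) = \binom{v}{k}^{-1} \sum_{\vec y} e^{i\vec \mu \cdot Z(\vec y)}$.

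First I would exploit the gap between $\Lambda_0$ and $\Lambda_X$. Because $\vec \ell \notin \Lambda_X$, the characterization \eqref{modulo_condition} gives $\vec x^\star, \vec y^\star \in V_{v,k}$ with $\vec \ell \cdot Z(\vec x^\star) \not\equiv \vec \ell \cdot Z(\vec y^\star) \pmod{2\pi}$. On the other hand, $\vec \ell \in \Lambda_0$ means all components of $\vec \ell$ agree modulo $\tfrac{2\pi}{k-1}$, so I can write $\vec \ell = \ell^0 \vec 1 + \tfrac{2\pi}{k-1}\vec n$ for some $\ell^0 \in \R$ and integer vector $\vec n$. Since each $Z(\vec y)$ has exactly $\binom{k}{2}$ ones, the $\ell^0 \vec 1$ piece contributes the same amount to $\vec \ell \cdot Z(\vec x^\star)$ and $\vec \ell \cdot Z(\vec y^\star)$, forcing
\[
\vec \ell \cdot \bigl(Z(\vec x^\star) - Z(\vec y^\star)\bigr) \;\in\; \tfrac{2\pi}{k-1}\Z \;\setminus\; 2\pi\Z.
\]
Its representative in $(-\pi,\pi]$ modulo $2\pi$ therefore has absolute value at least $\tfrac{2\pi}{k-1}$.

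Next I would absorb the perturbation. Since $\|Z(\vec x^\star) - Z(\vec y^\star)\|_1 \leq 2\binom{k}{2} = k(k-1)$, one has $|\vec \zeta \cdot (Z(\vec x^\star) - Z(\vec y^\star))| \leq k(k-1)\delta$, which the chosen $\delta$ makes negligible compared to $\tfrac{2\pi}{k-1}$. Hence $\phi := \vec \mu \cdot (Z(\vec x^\star) - Z(\vec y^\star))$ still has $(-\pi,\pi]$-representative bounded away from $0$ by a quantity comparable to $\tfrac{2\pi}{k-1}$. Isolating the two terms $\vec x^\star, \vec y^\star$ in the sum defining $\Phi_X(\vec \mu)$ and bounding the remaining $\binom{v}{k} - 2$ terms trivially by $1$ yields
\[
|\Phi_X(\vec \mu)| \;\leq\; \binom{v}{k}^{-1}\Bigl( 2|\cos(\phi/2)| + \binom{v}{k} - 2 \Bigr) \;=\; 1 \;-\; \binom{v}{k}^{-1}\bigl(2 - 2|\cos(\phi/2)|\bigr).
\]
The elementary inequality $1 - \cos\theta \geq 2\theta^2/\pi^2$ on $[-\pi,\pi]$, applied to $\phi$, converts the angular gap into a quadratic lower bound of order $(\tfrac{2\pi}{k-1})^2$ on $2 - 2|\cos(\phi/2)|$, which produces the claimed shape of the bound.

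The hard part will be the numerical bookkeeping. The explicit constant $\tfrac{1}{96}$ and the aggressive $\delta$-restriction, with its $\binom{v}{k}^{-2}$ and $(\tfrac{2\pi}{k-1})^4$ factors, must be calibrated so that every Taylor-style error introduced by $\vec \zeta$ (both in passing from $\vec \ell$-angles to $\vec \mu$-angles, and in the quadratic approximation of $\cos$) is safely dominated by the main gap. The appearance of the extra factor $\binom{v}{k}^{-1}(\tfrac{2\pi}{k-1})^2$ in the $\delta$-bound beyond what the heuristic above strictly requires is almost certainly a conservative safety margin chosen so that the resulting inequality takes the clean form stated in the lemma, rather than evidence of a fundamentally different argument.
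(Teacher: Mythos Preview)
Your proposal is correct and follows essentially the same strategy as the paper: identify two vectors $\vec x^\star,\vec y^\star\in V_{v,k}$ whose phase difference under $\vec\ell$ lies in $\tfrac{2\pi}{k-1}\Z\setminus 2\pi\Z$, isolate those two terms in the sum defining $\Phi_X$, and bound the rest trivially. The only noteworthy difference is the order of operations: the paper first bounds $|\Phi_X(\vec\ell)|$ at the lattice point and then passes to $|\Phi_X(\vec\mu)|$ via a continuity estimate $|\Phi_X(\vec\mu)|\le|\Phi_X(\vec\ell)|+\sqrt{6k^2\delta}$ (obtained by controlling real and imaginary parts separately), whereas you absorb the perturbation $\vec\zeta$ directly into the angle $\phi$ before isolating the two terms. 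Your route is slightly more direct and avoids the square-root continuity step; the paper's route makes the role of the $\delta$-threshold more transparent, since the perturbation cost $\sqrt{6k^2\delta}$ appears as an explicit additive error to be dominated by $\binom{v}{k}^{-1}\tfrac{1}{96}(\tfrac{2\pi}{k-1})^2$. Both lead to the same conclusion, and your remark that the aggressive $\delta$-restriction is a conservative margin rather than a structural necessity is accurate.
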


\begin{rmk}
  The essential point is that the bound holds when $\delta$ is sufficiently small in a manner that depends only on the preset and fixed parameters $v$ and $k$.  In the sequel, we will allow $\delta \to 0$ and the exact threshold for when the bound applies will not be of importance.  
\end{rmk}

\begin{rmk} 
  Our previous assumptions on $v$ and $k$ are that $k \geq 2$ and $v - k \geq 2$.  We notice that in the particular case where $k = 2$, the set $R_B^{\delta}$ is empty.  This is because the defining characteristic of $\Lambda_0$ simply reduces to all coordinates being congruent to one another modulo $2 \pi$; hence, taken modulo $2 \pi$ the vector is a multiple of $\vec 1$.  By Proposition \ref{in_lambda}, vectors which satisfy this condition are necessarily in $\Lambda_X$, implying that $\Lambda_X = \Lambda_0$ in this case.  Since $R_B^{\delta}$ is empty, the bound in Lemma \ref{region_b_estimate} vacuously holds in this case, so we will assume that $k \geq 3$ in the proof.
\end{rmk}

\begin{proof}[Proof of Lemma \ref{region_b_estimate}]
Let $\vec x, \vec y \in V_{v,k}$; if $\vec \ell \in \Lambda_0$, then $|Z( \vec x) \cdot \vec \ell - Z(\vec y) \cdot \vec \ell| \in \frac{2 \pi}{k-1} \Z$.  Hence, taken modulo $2 \pi$, the possible values of $|Z(\vec x) \cdot \vec \ell - Z(\vec y) \cdot \vec \ell|$ are $\{0, \frac{2\pi}{k-1}, \dots, \frac{(k-2) 2\pi}{k-1} \}$.  If $\vec \ell \notin \Lambda_X$, then by \eqref{modulo_condition} there exist $\vec x, \vec y$ so that modulo $2 \pi$, we have $|Z(\vec x) \cdot \vec \ell - Z(\vec y) \cdot \vec \ell| \neq 0 \pmod{2 \pi}$.  Therefore, for $\vec \ell \in \Lambda_0 \setminus \Lambda_X$,
  \begin{align*} |\Phi_X(\vec \ell)| &=\left| \frac{1}{\binom{v}{k}} \sum_{\vec x \in V_{v,k}} e^{i \vec \ell \cdot Z(\vec x)} \right| \\
  & \leq \frac{1}{\binom v k} \left[ \left| e^{i \vec \ell \cdot Z(\vec x)} + e^{i \vec \ell \cdot Z(\vec y)} \right| + \left| \sum_{\vec w \neq \vec x, \vec y} e^{i \vec \ell \cdot Z(\vec w)} \right| \right]
  \end{align*}
and since $|e^{i a} + e^{i b}|^2 = 2 + 2 \cos(a - b)$, we have
\begin{equation} \label{region_b_bound_1}   |\Phi_X(\vec \ell)| \leq \frac{1}{\binom v k} \left[ \sqrt{2 + 2 \cos \left(\frac{2 \pi}{k-1} \right)} + \binom v k - 2  \right].  \end{equation}
  
We note that
  \[\sqrt{x} \leq 1 + x/4 \]
and that 
  \[ \cos(x) \leq 1 - \frac{x^2}{2} + \frac{x^4}{4}\]
so substituting these into \eqref{region_b_bound_1} yields
  \begin{equation} \label{region_b_bound_2}
    |\Phi_X(\vec \ell) | \leq 1 - \frac{1}{\binom{v}{k}} \left[ \frac{ \left( \frac{2\pi}{k-1} \right)^2}{4} - \frac{\left( \frac{2\pi}{k-1} \right)^4}{48} \right].
  \end{equation}
We also note that when $k \geq 3$, 
  \[ \left( \frac{2 \pi}{k-1} \right)^4 < 11 \left( \frac{2 \pi}{k-1} \right)^2 \]
and applying this to \eqref{region_b_bound_2} gives
  \begin{equation} \label{region_b_bound_3}
     |\Phi_X(\vec \ell) | \leq 1 - \frac{1}{\binom{v}{k}} \left[ \frac{1}{48} \left( \frac{2 \pi}{k-1} \right)^2 \right].
  \end{equation}
  
Now, let $\vec \mu = \vec \ell + \vec \zeta$, where $\vec \ell \in \Lambda_0 \setminus \Lambda_X$ and $|\zeta_{\{i,j\}}| < \delta$ for all $i, j$.  Since $\Phi_X(\vec \mu) = \nk^{-1} \sum_{\vec x \in V_{v,k}} e^{i \vec \mu \cdot Z(\vec x)}$, by the triangle inequality and the fact that $|\cos(a + b) - \cos(a) | \leq |b|$, we have
  \begin{align*}
   &  | \real(\Phi_X(\vec \ell + \vec \zeta)) - \real(\Phi_X(\vec \ell))|  \\ 
   & \qquad \quad = \nk^{-1} \left| \sum_{\vec x \in V_{v,k}} \left( \cos((\vec \ell + \vec \zeta) \cdot Z(\vec x) ) - \cos(\vec \ell \cdot Z(\vec x)) \right)   \right| \\
   & \qquad \quad \leq \nk^{-1} \sum_{\vec x \in V_{v,k}} \left| \vec \zeta \cdot Z(\vec x) \right|.
  \end{align*}
We note that $|\vec \zeta \cdot Z(\vec x)| \leq \binom{k}{2} \delta < k^2 \delta$, since the vector $Z(\vec x)$ is $1$ in exactly $\binom{k}{2}$ coordinates and is $0$ elsewhere.  Since $|V_{v,k}| = \nk$, this shows that 
  \[ |\real(\Phi_X(\vec \mu)) - \real(\Phi_X(\vec \ell))| \leq k^2 \delta \]
and that in particular,
  \begin{equation}  \label{region_b_bound_4}
    |\real(\Phi_X(\vec \mu))| \leq | \real( \Phi_X(\vec \ell))| + k^2 \delta.
  \end{equation}
An identical argument with sines instead of cosines shows that
  \begin{equation}  \label{region_b_bound_5}
   |\imag(\Phi_X(\vec \mu))| \leq | \imag( \Phi_X(\vec \ell))| + k^2 \delta.
  \end{equation}
By \eqref{region_b_bound_4} and \eqref{region_b_bound_5}, we have
  \begin{align*}
    |\Phi_X(\vec \mu)|^2 & = |\real(\Phi_X(\vec \mu))|^2 + |\imag(\Phi_X(\vec \mu))|^2 \\
    & \leq |\real(\Phi_X(\vec \ell))|^2 + |\imag(\Phi_X(\vec \ell))|^2 + 4 k^2 \delta + 2 k^4 \delta^2
  \end{align*}
and since our assumptions on $\delta$ imply that $k^2 \delta < 1$, we employ the estimate
  \begin{align*}
    |\Phi_X(\vec \mu)| & \leq \sqrt{| \Phi_X(\vec \ell)|^2 + 6 k^2 \delta} \\
    & \leq |\Phi_X(\vec \ell)| + \sqrt{6 k^2 \delta}.
  \end{align*}
Putting this together with \eqref{region_b_bound_3} and our assumptions on $\delta$ gives
  \[ |\Phi_X(\vec \mu)|  \leq 1 - \nk^{-1} \left[ \frac{1}{48} \cdot \left( \frac{2 \pi}{k-1} \right)^2 \right] + \nk^{-1} \left[ \frac{1}{96} \left(\frac{2 \pi}{k-1} \right)^2 \right]  \]
as desired.
\end{proof}

Next, we seek to find a bound for the integrand on the region $R_C^{\delta}$, which will be achieved with the use of Lemma \ref{near_lambda_2}.

\begin{lemma} \label{region_c_estimate}
Suppose $\delta < 4$.  Then if $\vec \mu \in R_C^{\delta}$, we have
  \[ |\Phi_X(\vec \mu)| \leq 1 - \nk^{-1} \frac{11}{48} \left( \frac{\delta}{4} \right)^2. \]
\end{lemma}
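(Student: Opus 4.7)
The strategy parallels that of Lemma \ref{region_b_estimate}, but we first need to convert the geometric hypothesis $\vec \mu \in R_C^\delta$ into a usable analytic statement. By Definition \ref{R_sets}, $\vec \mu \in R_C^\delta$ means no $\vec \ell \in \Lambda_0$ satisfies $|\mu_{\{i,j\}} - \ell_{\{i,j\}}| < \delta$ for all $\{i,j\}$. I would exhibit a specific candidate $\vec \ell$ to exploit this: fix any coordinate index $\{c,d\}$, set $\ell_{\{c,d\}} := \mu_{\{c,d\}}$, and for each other index $\{i,j\}$ choose the integer $n_{\{i,j\}}$ minimizing $|\mu_{\{i,j\}} - \mu_{\{c,d\}} - \tfrac{2\pi}{k-1} n_{\{i,j\}}|$ and set $\ell_{\{i,j\}} := \mu_{\{c,d\}} + \tfrac{2\pi}{k-1} n_{\{i,j\}}$. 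This $\vec \ell$ lies in $\Lambda_0$, so the hypothesis forces the existence of some $\{a,b\} \neq \{c,d\}$ with
\[ \min_{n \in \Z} \bigl| \mu_{\{a,b\}} - \mu_{\{c,d\}} - \tfrac{2\pi}{k-1} n \bigr| \geq \delta. \]

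Next, I would invoke the contrapositive of Lemma \ref{near_lambda_2} with $\epsilon_0 = \delta/4$: since that lemma says that if every $Z(\vec x)\cdot \vec \mu - Z(\vec y) \cdot \vec \mu$ lies within $\epsilon_0$ of $2\pi\Z$ then every coordinate difference lies within $4\epsilon_0$ of $\tfrac{2\pi}{k-1}\Z$, the coordinate-pair statement just established supplies $\vec x, \vec y \in V_{v,k}$ with
\[ \min_{z \in \Z} \bigl| \vec \mu \cdot (Z(\vec x) - Z(\vec y)) - 2\pi z \bigr| \geq \delta/4, \]
whence $\cos\bigl(\vec \mu \cdot(Z(\vec x) - Z(\vec y))\bigr) \leq \cos(\delta/4)$.

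The remainder of the proof closely mirrors Lemma \ref{region_b_estimate}. Pairing off the summands $e^{i\vec\mu\cdot Z(\vec x)}$ and $e^{i\vec\mu\cdot Z(\vec y)}$ in $\Phi_X(\vec \mu) = \nk^{-1}\sum_{\vec w \in V_{v,k}} e^{i\vec\mu\cdot Z(\vec w)}$ and applying $|e^{ia}+e^{ib}|^2 = 2 + 2\cos(a-b)$ yields
\[ |\Phi_X(\vec \mu)| \leq \nk^{-1}\Bigl[\sqrt{2 + 2\cos(\delta/4)} + \nk - 2\Bigr]. \]
The Taylor estimate $\cos(x) \leq 1 - x^2/2 + x^4/24$ applied with $x = \delta/4$, followed by the concavity bound $\sqrt{4-s} \leq 2 - s/4$ (valid on $s \in [0,4]$, which holds since $\delta < 4$ makes $(\delta/4)^2 < 1$), gives
\[ |\Phi_X(\vec \mu)| \leq 1 - \nk^{-1}\Bigl[ \tfrac{1}{4}(\delta/4)^2 - \tfrac{1}{48}(\delta/4)^4 \Bigr]. \]
Factoring $(\delta/4)^2$ out of the bracket and using $(\delta/4)^2 < 1$ bounds the bracketed term below by $(\delta/4)^2(1/4 - 1/48) = \tfrac{11}{48}(\delta/4)^2$, which is the claimed estimate.

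The main obstacle is the first step: converting the failure of $\vec \mu$ to lie near \emph{any} $\vec \ell \in \Lambda_0$ (an infinite, combinatorially-defined set) into the existence of a single coordinate pair whose difference is bounded away from $\tfrac{2\pi}{k-1}\Z$. Once this reduction is made, the rest consists of a contrapositive application of the already-proven Lemma \ref{near_lambda_2} together with the same elementary Taylor bookkeeping used in Lemma \ref{region_b_estimate}, with the variable angle $\delta/4$ playing the role of the fixed angle $\tfrac{2\pi}{k-1}$.
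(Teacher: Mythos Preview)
Your proof is correct and follows essentially the same route as the paper: reduce $\vec\mu\in R_C^\delta$ to a coordinate pair whose difference is $\geq\delta$ from $\tfrac{2\pi}{k-1}\Z$, apply the contrapositive of Lemma~\ref{near_lambda_2} with $\epsilon_0=\delta/4$, and then repeat the Taylor bookkeeping of Lemma~\ref{region_b_estimate} with $\delta/4$ in place of $\tfrac{2\pi}{k-1}$. If anything you are more careful than the paper, which simply asserts the first reduction without exhibiting the candidate $\vec\ell\in\Lambda_0$ that you construct.
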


\begin{proof}
For $x \in \R$ and $y, \epsilon_0 > 0$, we say that
  \[ |x| \bmod y < \epsilon_0 \]
if there exist $z \in \Z$ and $\epsilon \in \R$ such that $x = y z + \epsilon$ and $|\epsilon| < \epsilon_0$.  Its negation is denoted
  \[ |x| \bmod y \geq \epsilon_0 \]
and signifies that for every $z \in \Z$ and $\epsilon \in \R$, if  $x - y  z = \epsilon$, then $|\epsilon| > \epsilon_0$.  


    Suppose $\vec \mu \in R_C^{\delta}$; then there must exist a choice of $a, b, c, d$ such that $|\mu_{\{a,b\}} - \mu_{\{c,d\}}| \bmod  \frac{2\pi}{k-1} \geq \delta$. Therefore, we see by Lemma \ref{near_lambda_2} that there are vectors $\vec x, \vec y \in V_{v,k}$ for which $|Z(\vec x) \cdot \vec \mu - Z(\vec y) \cdot \vec \mu| \bmod 2 \pi \geq \delta/4$.  This condition implies that 
  \begin{equation} \label{region_c_bound_1}
    \cos(Z(\vec x) \cdot \vec \mu - Z(\vec y) \cdot \vec \mu) \leq \cos(\delta/4).
  \end{equation}
When computing $\Phi_X(\vec \mu)$, we use the same arguments and calculations that led to \eqref{region_b_bound_1} and \eqref{region_b_bound_2}, but with $\delta/4$ in place of $\frac{2 \pi}{k-1}$ inside the cosine function, to obtain
  \begin{equation*}
    |\Phi_X(\vec \mu)| \leq 1 - \nk^{-1} \left[ \frac{(\delta/4)^2}{4} - \frac{(\delta/4)^4}{48} \right].
  \end{equation*}
Then if $\delta < 4$, we have
  \[ |\Phi_X(\vec \mu)| \leq 1 - \nk^{-1} \frac{11}{48} \left( \frac{\delta}{4} \right)^2 \]
as desired.
\end{proof}

Having established our bounds on the integrands on regions $R_B^{\delta}$ and $R_C^{\delta}$ (and in particular on their subsets $R_{B, \equiv}^{\delta}$ and $R_{C, \equiv}^{\delta}$), we are now prepared to bound the corresponding integrals in \eqref{integral_breakup_4} and \eqref{integral_breakup_5}.    The previous lemmas give rise to the following upper bound on the regions of the integral that are far from $\Lambda_X$. 

\begin{prop} \label{bad_region_decay}
  When $\delta < k^{-2} \nk^{-2} \left[ \frac{1}{6 \cdot 96^2} \left( \frac{2\pi}{k-1} \right)^4 \right]$, 
  \[  \left| (2 \pi)^{-d} \int_{R_{B, \equiv}^{\delta} \cup R_{C, \equiv}^{\delta}} \Phi_Y(\vec \theta)^t \ud \vec \theta \right| < \exp \left( - \nk^{-1} \frac{11}{768} t \delta^2  \right) . \]
\end{prop}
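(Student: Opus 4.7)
The plan is to reduce the integral bound to a pointwise bound on $|\Phi_Y(\vec\theta)|$, apply the two lemmas already established for $R_B^{\delta}$ and $R_C^{\delta}$, unify them, and then absorb the domain volume against the $(2\pi)^{-d}$ normalization. First I would use the triangle inequality together with $|\Phi_Y(\vec\theta)^t| = |\Phi_Y(\vec\theta)|^t = |\Phi_X(\vec\theta)|^t$ (the last equality is Proposition \ref{sets_are_equal}) to get
\[ \left| (2\pi)^{-d} \int_{R_{B,\equiv}^{\delta} \cup R_{C,\equiv}^{\delta}} \Phi_Y(\vec\theta)^t \ud \vec\theta \right| \leq (2\pi)^{-d} \int_{R_{B,\equiv}^{\delta} \cup R_{C,\equiv}^{\delta}} |\Phi_X(\vec\theta)|^t \ud \vec\theta. \]
Since $R_{B,\equiv}^{\delta} \cup R_{C,\equiv}^{\delta} \subset [-\pi,\pi)^d$, whose Lebesgue measure is $(2\pi)^d$, the right-hand side is at most $\sup\{|\Phi_X(\vec\theta)|^t : \vec\theta \in R_{B,\equiv}^{\delta} \cup R_{C,\equiv}^{\delta}\}$, and the $(2\pi)^{\pm d}$ factors cancel.

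Second, I would invoke the pointwise bounds already proved. On any point of $R_B^{\delta}$, Lemma \ref{region_b_estimate} yields $|\Phi_X(\vec\mu)| \leq 1 - \nk^{-1}\bigl[\tfrac{1}{96}(\tfrac{2\pi}{k-1})^2\bigr]$, while on any point of $R_C^{\delta}$, Lemma \ref{region_c_estimate} yields $|\Phi_X(\vec\mu)| \leq 1 - \nk^{-1}\tfrac{11}{768}\delta^2$. Since the containments $R_{B,\equiv}^{\delta} \subset R_B^{\delta}$ and $R_{C,\equiv}^{\delta} \subset R_C^{\delta}$ were already noted in the excerpt, these bounds apply directly on the domain of integration. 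I would then check that, under the hypothesis on $\delta$, the $R_C$ bound dominates (i.e., is the weaker of the two), so that both regions satisfy
\[ |\Phi_X(\vec\mu)| \leq 1 - \nk^{-1}\tfrac{11}{768}\delta^2. \]
This comes down to verifying $\tfrac{11}{768}\delta^2 \leq \tfrac{1}{96}(\tfrac{2\pi}{k-1})^2$, which is immediate from the very small upper bound imposed on $\delta$ (indeed, the stated hypothesis forces $\delta$ to be much smaller than $\tfrac{2\pi}{k-1}$).

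Third, I would apply the standard inequality $1 - x \leq e^{-x}$ for $x \in [0,1]$ to convert the pointwise bound into exponential form:
\[ |\Phi_X(\vec\mu)|^t \leq \exp\!\left(-\nk^{-1}\tfrac{11}{768} t\, \delta^2\right). \]
Combining this with the volume estimate from the first paragraph gives exactly the claimed inequality. The proof should therefore be quite short; the main (mild) obstacle is the bookkeeping check that the hypothesis on $\delta$ is strong enough to ensure both that Lemmas \ref{region_b_estimate} and \ref{region_c_estimate} apply and that the $R_C$ bound is the weaker one — but the lemmas' hypotheses on $\delta$ have already been tuned by the author precisely so that this unification works cleanly.
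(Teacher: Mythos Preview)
Your proposal is correct and follows essentially the same approach as the paper: triangle inequality, the identity $|\Phi_Y|=|\Phi_X|$, the containments $R_{B,\equiv}^{\delta}\subset R_B^{\delta}$ and $R_{C,\equiv}^{\delta}\subset R_C^{\delta}$, unification via the weaker (region $C$) bound, and $1-x\le e^{-x}$. The only cosmetic difference is that you spell out the volume cancellation of $(2\pi)^{\pm d}$ explicitly, whereas the paper leaves that step implicit.
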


\begin{proof}
We remark that since $|\Phi_Y(\vec \mu)| = |\Phi_X(\vec \mu)|$ as shown in the proof of Proposition \ref{sets_are_equal}, the bounds in Lemmas \ref{region_b_estimate} and \ref{region_c_estimate} apply to $|\Phi_Y(\vec \mu)|$ as well.  The assumption on $\delta$ implies that both Lemmas \ref{region_b_estimate} and \ref{region_c_estimate} apply.  Moreover, when this assumption on $\delta$ holds, it is easy to verify that the upper bound given in Lemma \ref{region_c_estimate} is larger than the upper bound given in Lemma \ref{region_b_estimate}. Since $R_{B, \equiv}^{\delta} \subset R_B^{\delta}$ and $R_{C, \equiv}^{\delta} \subset R_C^{\delta}$, putting the aforementioned estimates together yields
  \begin{align*}
    \left| (2 \pi)^{-d} \int_{R_{B, \equiv}^{\delta} \cup R_{C, \equiv}^{\delta}} \Phi_Y(\vec \theta)^t \ud \vec \theta \right| & \leq (2 \pi)^{-d}  \int_{R_{B, \equiv}^{\delta} \cup R_{C, \equiv}^{\delta}} |\Phi_Y(\vec \theta)|^t \ud \vec \theta \\
    & < \left[ 1 - \nk^{-1} \frac{11}{48} \left( \frac{\delta}{4} \right)^2 \right]^t \\
    & \leq \exp \left( - \nk^{-1} \frac{11}{768} t \delta^2 \right) \, . \qedhere
  \end{align*}  
\end{proof}
\section{Bounds Near the Maximal Set} \label{S:near_lambda}

We now seek to analyze the integrand in the region $R_{A, \equiv}^{\delta}$.  By considering \eqref{integral_breakup_5}, we see that our primary concern will be to determine bounds for the integral on the region $T_{\vec 0}^{\delta} \subset R_{A, \equiv}^{\delta}$.  We first define some combinatorial terms; for $j \in \Z^+$ with $j \leq v$, we set
  \begin{equation} \label{definition_c_j}
     C_j = \frac{\prod_{i=0}^{j-1} (k-i) }{\prod_{i=0}^{j-1} (v-i) }
  \end{equation}
and we note that if $j \leq k$, then
  \[ C_j = \frac{\binom{k}{j}}{\binom{v}{j}}  \]
whereas if $j > k$ then $C_j = 0$.  (Although the $C_j$ terms depend on both parameters $v$ and $k$, we will opt to omit this from the notation.)

We also define a $d \times d$ matrix $N$.  We regard the indices of $N$ in the same way that we regard the indices of $\R^d$; that is, its indices are sets of the form $\{a, b\}$ with $1 \leq a < b \leq v$.  Entries in the matrix $N$ will be denoted by $N_{\{a, b\}, \{c,d\}}$.  We define these entries in terms of the aforementioned combinatorial coefficients $C_j$, as follows:  
  \begin{equation} \label{matrix_coeff_def}
    N_{\{a,b\}, \{c,d\}} =
      \begin{cases}
        C_2 - C_2^2, &  |\{a, b\} \cap \{c, d\}| = 2 \\
        C_3 - C_2^2, &  |\{a, b\} \cap \{c, d\}| = 1 \\
        C_4 - C_2^2, &  |\{a, b\} \cap \{c, d\}| = 0
      \end{cases}
  \end{equation}
This makes $N$ a real, symmetric matrix.

\begin{prop} \label{N_is_singular}
  With $N$ as defined in \eqref{matrix_coeff_def} and with $k \geq 2$ and $v - k \geq 2$, we have $N \vec 1 = \vec 0$ and $\vec 1^T N = \vec 0^T$.
\end{prop}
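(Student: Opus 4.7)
The plan is to interpret the matrix $N$ probabilistically as a covariance matrix. Specifically, let $\vec y$ be drawn uniformly at random from $V_{v,k}$, and consider the random vector $Z(\vec y) \in \R^d$. I claim that $N$ is precisely the covariance matrix of $Z(\vec y)$. To see this, recall that for distinct indices $i_1, \dots, i_j$, we have $\E[y_{i_1} \cdots y_{i_j}] = \PB(y_{i_1} = \cdots = y_{i_j} = 1) = C_j$, since $C_j$ is the fraction of $k$-subsets of $\{1,\dots,v\}$ that contain a fixed $j$-subset. In particular $\E[Z(\vec y)_{\{a,b\}}] = C_2$, and since coordinates of $Z(\vec y)$ take values in $\{0,1\}$, the entry $\Cov(Z(\vec y)_{\{a,b\}}, Z(\vec y)_{\{c,d\}})$ equals $\E[y_a y_b y_c y_d] - C_2^2$, which is $C_2 - C_2^2$, $C_3 - C_2^2$, or $C_4 - C_2^2$ according to whether $|\{a,b\} \cap \{c,d\}|$ is $2$, $1$, or $0$. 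This matches \eqref{matrix_coeff_def} exactly.

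With this identification, the proposition follows from a single observation: for every $\vec y \in V_{v,k}$,
\[ \vec 1 \cdot Z(\vec y) = \sum_{1 \leq a < b \leq v} y_a y_b = \binom{k}{2} , \]
which is a deterministic constant. Consequently $\vec 1^T Z(\vec y)$ has variance zero, and moreover for any deterministic $\vec b \in \R^d$ the covariance $\Cov(\vec 1^T Z(\vec y), \vec b^T Z(\vec y)) = \vec 1^T N \vec b$ must vanish. Taking $\vec b$ to range over the standard basis yields $\vec 1^T N = \vec 0^T$, and since $N$ is symmetric, $N \vec 1 = \vec 0$ follows immediately.

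There is no serious obstacle here; the only thing to check is the identification of $N$ with the covariance matrix of $Z(\vec y)$, which is a direct unpacking of definitions. If one preferred a purely combinatorial proof, the same result can be obtained by a direct row-sum computation: for fixed $\{a,b\}$, there is exactly one index $\{c,d\}$ with $|\{a,b\} \cap \{c,d\}| = 2$, exactly $2(v-2)$ with intersection $1$, and exactly $\binom{v-2}{2}$ with intersection $0$, so
\[ (N \vec 1)_{\{a,b\}} = C_2 + 2(v-2) C_3 + \binom{v-2}{2} C_4 - \binom{v}{2} C_2^2, \]
which simplifies via the identity $1 + 2(k-2) + \binom{k-2}{2} = \binom{k}{2}$ to $C_2 \binom{k}{2} - \binom{v}{2} C_2^2 = 0$ after factoring out $C_2$. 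The probabilistic viewpoint is preferable since it makes the vanishing transparent and will likely be useful again when analyzing the quadratic form $\vec \theta^T N \vec \theta$ arising in the local central limit theorem of Section \ref{S:near_lambda}.
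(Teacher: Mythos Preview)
Your probabilistic argument is correct and is in fact more conceptual than the paper's own proof. The paper establishes Proposition~\ref{N_is_singular} by the direct row-sum computation you sketch as your secondary route: it counts, for fixed $\{a,b\}$, how many coordinates $\{c,d\}$ have intersection size $2$, $1$, or $0$, and then verifies the identity $C_2 + 2(v-2)C_3 + \binom{v-2}{2} C_4 = d\, C_2^2$ by hand. Your covariance interpretation short-circuits that algebra entirely, replacing it with the single observation that $\vec 1 \cdot Z(\vec y) = \binom{k}{2}$ is deterministic.

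The paper does eventually prove the identification of $N$ with the covariance matrix of $Z(\vec y)$ --- this is exactly Proposition~\ref{second_moment} --- but only \emph{after} Proposition~\ref{N_is_singular}, so it does not use that fact here. Your ordering is arguably better: once the covariance identity is in hand, both the singularity of $N$ along $\vec 1$ and the positive-semidefiniteness used in Corollary~\ref{M_positive_definite} are immediate. The paper's ordering has the minor advantage that Proposition~\ref{N_is_singular} then stands on purely elementary combinatorics, independent of any probabilistic setup.

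One small practical note: the macro \texttt{\textbackslash Cov} is not defined in this paper, so if your write-up is to compile you should either define it or write the covariance out explicitly as $\E[(\cdot)(\cdot)] - \E[\cdot]\E[\cdot]$.
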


\begin{proof}
  We first note two easily-verified computations:
  \begin{equation} \label{coefficient_combinatorics_1}
    1 + 2(v-2) + \binom{v-2}{2} = d
  \end{equation}
and 
  \begin{equation} \label{coefficient_combinatorics_2}
    C_2 + 2(v-2) C_3 + \binom{v-2}{2} C_4 =  d \cdot C_2^2 \, .
  \end{equation}

  We will show that the sum of the columns of $N$ is $\vec 0$.  For a fixed $\{a, b\}$, we consider coordinates of the form $\{c, d\}$.  Exactly one coordinate (namely, $\{a,b\}$) has $|\{a,b\} \cap \{c,d\}| = 2$, exactly $2(v-2)$ coordinates have $|\{a,b\} \cap \{c,d\}| = 1$, and exactly $\binom{v-2}{2} = \frac{(v-2)(v-3)}{2}$ coordinates have $|\{a,b\} \cap \{c,d\}| = 0$.  The claim that $N \vec 1 = \vec 0$ then amounts to showing that
  \[(C_2 - C_2^2) + (C_3 - C_2^2) \cdot 2(v-2) + (C_4 - C_2^2) \cdot \binom{v-2}{2} = 0   \]
which follows immediately from \eqref{coefficient_combinatorics_1} and \eqref{coefficient_combinatorics_2}.  The claim $\vec 1^T N = \vec 0^T$ then follows from the symmetry of $N$.
\end{proof}

To motivate the construction of the matrix $N$, we let $\vec \xi$ be a randomly-selected element of $V_{v,k} \subset \R^v$ and we recall that $Z(\vec \xi) = (\xi_1 \xi_2, \xi_1 \xi_3, \dots, \xi_{v-1} \xi_v)$.  We also recall that the random walk $Y_t$ has increments of the form $Z(\vec \xi) - C_2 \vec 1$ where $\xi$ is chosen randomly and uniformly from the elements in $V_{v,k}$.  For $\vec \mu \in [-\pi, \pi]^d$, we will be interested in computing and estimating quantities of the form
  \begin{equation}  \label{expectation_p}
    \E  \left[ \left(\vec \mu \cdot (Z(\vec \xi) - C_2 \vec 1) \right)^p \right]
  \end{equation}
for $p = 1, 2, 3, 4$.  The purpose of the matrix $N$ is the following proposition:

\begin{prop} \label{second_moment}
  Let $\vec \mu \in [-\pi, \pi)^d$.  Then
    \[ \E  \left[ \left(\vec \mu \cdot (Z(\vec \xi) - C_2 \vec 1) \right)^2 \right] = \vec \mu^T N \vec \mu \, .  \]
\end{prop}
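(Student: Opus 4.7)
The plan is to expand the square, use linearity of expectation, and observe that the resulting coefficients match precisely the entries of $N$. First I would note the key fact that $C_2 = \binom{v-2}{k-2}/\binom{v}{k} = \Pr(\xi_a = 1, \xi_b = 1) = \E[\xi_a \xi_b]$ for any $a \neq b$, so that $Z(\vec \xi) - C_2 \vec 1$ has mean zero and each coordinate $\{a,b\}$ equals $\xi_a \xi_b - C_2$.

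Expanding the square yields
\[
  \E\!\left[ \left(\vec \mu \cdot (Z(\vec \xi) - C_2 \vec 1)\right)^2 \right]
  = \sum_{\{a,b\}, \{c,d\}} \mu_{\{a,b\}} \mu_{\{c,d\}} \left( \E[\xi_a \xi_b \xi_c \xi_d] - C_2^2 \right),
\]
so it suffices to show $\E[\xi_a \xi_b \xi_c \xi_d] - C_2^2 = N_{\{a,b\},\{c,d\}}$ in each of the three cases appearing in \eqref{matrix_coeff_def}. Since the entries of $\vec \xi$ take only values $0$ and $1$, the product $\xi_a \xi_b \xi_c \xi_d$ equals $1$ precisely when each distinct index appearing among $a, b, c, d$ has $\xi$-value $1$. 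The number of distinct indices is $4 - |\{a,b\} \cap \{c,d\}|$, so $\E[\xi_a \xi_b \xi_c \xi_d]$ equals the probability that a uniformly random $k$-subset of $\{1, \dots, v\}$ contains that many specified elements. That probability is $C_j$ with $j = 4 - |\{a,b\} \cap \{c,d\}| \in \{2, 3, 4\}$, matching the definition in \eqref{definition_c_j}.

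Case by case: when $\{a,b\} = \{c,d\}$, two distinct indices are involved and the expectation is $C_2$; when $|\{a,b\} \cap \{c,d\}| = 1$, three distinct indices appear and the expectation is $C_3$; and when the sets are disjoint, all four indices are distinct and the expectation is $C_4$. Subtracting $C_2^2$ in each case reproduces the entries of $N$ exactly as in \eqref{matrix_coeff_def}. Substituting back into the double sum gives $\vec \mu^T N \vec \mu$, as desired.

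There is no real obstacle here — the only subtlety is the observation that $\xi_i^2 = \xi_i$ allows the product of the four $\xi$'s to collapse to a product over the \emph{distinct} indices, which is what makes the expectation depend only on the cardinality of $\{a,b\} \cap \{c,d\}$ rather than on $a,b,c,d$ individually.
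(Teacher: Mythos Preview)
Your proof is correct and follows essentially the same approach as the paper's own proof: expand the square, identify the covariance $\E[(\xi_a\xi_b - C_2)(\xi_c\xi_d - C_2)] = \E[\xi_a\xi_b\xi_c\xi_d] - C_2^2$, and compute the latter as $C_j$ with $j$ the number of distinct indices. Your presentation is slightly more streamlined in that you treat the three cases uniformly via the observation $\xi_i^2 = \xi_i$, whereas the paper writes out each expansion separately, but the underlying argument is identical.
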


\begin{proof}
  The left term is
    \begin{align*}
      \E  \left[ \left(\vec \mu \cdot (Z(\vec \xi) - C_2 \vec 1) \right)^2 \right]
      & = \E \left[  \left( \sum_{\{a,b\}} \mu_{\{a,b\}} (\xi_a \xi_b - C_2)  \right)^2  \right] \\
      & = \sum_{\{a,b\}, \{c,d\}} \mu_{\{a,b\}} \mu_{\{c,d\}} \E \left[  (\xi_a \xi_b - C_2)(\xi_c \xi_d - C_2)  \right] 
    \end{align*} 
where the last sum is taken over all ordered pairs of coordinate sets.  To prove the result, we must show that this quadratic form agrees with the entries of $N$; that is, that $\E[(\xi_a \xi_b - C_2)(\xi_c \xi_d - C_2)]$ is given by the coefficients of $N$ in \eqref{matrix_coeff_def}.

We first consider the terms in the sum where $|\{a,b\} \cap \{c,d\}| = 2$; that is, $\{c, d\} = \{a, b\}$.  Here, 
  \begin{equation} \label{abab_moment_1}
    \E[(\xi_a \xi_b - C_2)(\xi_a \xi_b - C_2)]  = \E[\xi_a \xi_b - 2 C_2 \xi_a \xi_b + C_2^2 ]
  \end{equation}
since all vectors in $V_{v,k}$ have entries that are either $0$ or $1$.  The product $\xi_a \xi_b$ will be 1 if $\xi_a = 1$ and $\xi_b = 1$; otherwise, it will be $0$.  Of the $\nk$ vectors in $V_{v,k}$, there are $\binom{v-2}{k-2}$ vectors which have $\xi_a = 1$ and $\xi_b = 1$, corresponding to the ways to select the locations for the remaining $k-2$ $1$'s from the remaining $v-2$ possible positions.  Hence, the probability that $\xi_a \xi_b$ is $1$ is $\binom{v-2}{k-2}/\binom{v}{k} = C_2$, from which it follows that
  \begin{equation} \label{first_moment}
    \E[\xi_a \xi_b] = C_2 \, .
  \end{equation}
Substituting this into \eqref{abab_moment_1} gives
  \[
    \E[(\xi_a \xi_b - C_2) (\xi_a \xi_b - C_2)] = C_2 - C_2^2 
  \]
which agrees with the corresponding coefficient of $N$.

Next, we consider the terms in the sum where $|\{a, b\} \cap \{c, d\}| = 1$ by considering an index pair of the form $\{a, b\}, \{a, c\}$.  In this case,
  \begin{equation} \label{abac_moment_1}
    \E[(\xi_a \xi_b - C_2)(\xi_a \xi_c - C_2)]  = \E[\xi_a \xi_b \xi_c - C_2 \xi_a \xi_b - C_2 \xi_a \xi_c + C_2^2 ] \, .
  \end{equation}
By analyzing the first term in a fashion similar to our discussion of \eqref{first_moment}, we see that $\E[\xi_a \xi_b \xi_c] = \binom{v-3}{k-3}/ \nk = C_3$.  Using this and \eqref{first_moment} in \eqref{abac_moment_1} shows that
  \[
    \E[(\xi_a \xi_b - C_2) (\xi_a \xi_c - C_2)] = C_3 - C_2^2 
  \]
which again agrees with the corresponding coefficient of $N$.  

Finally, we consider the case where $|\{a, b\} \cap \{c, d\} = 0|$; that is, $a, b, c, d$ are all distinct.  Here,
  \begin{equation} \label{abcd_moment_1}
    \E[(\xi_a \xi_b - C_2)(\xi_c \xi_d - C_2)]  = \E[\xi_a \xi_b \xi_c \xi_d - C_2 \xi_a \xi_b - C_2 \xi_c \xi_d + C_2^2 ]
  \end{equation}
and as before, the expectation of the first term is $\binom{v-4}{k-4} / \nk = C_4$, whence \eqref{abcd_moment_1} becomes
  \[
    \E[(\xi_a \xi_b - C_2) (\xi_c \xi_d - C_2)] = C_4 - C_2^2 
  \]
which also agrees with the corresponding entry of $N$.
\end{proof}

\begin{rmk} \label{mean_zero}
  The process $Y_t$ was defined as being the process $X_t$ with a drift correction, which corresponds to the calculation in \eqref{first_moment}.  That calculation shows that the term in \eqref{expectation_p} is $0$ when $p = 1$.  We have now calculated the term when $p = 2$; in the following Lemma, we will estimate (rather than compute) the terms with $p = 3$ and $p = 4$.
\end{rmk}

\begin{lemma} \label{good_region_bounds}
  Let $\delta > 0$.  Then there is a function $\varepsilon_1: T_{\vec 0}^{\delta} \to \R$ such that for all $\vec \mu \in T_{\vec 0}^{\delta}$, we have
  \begin{equation} \label{phi_real_bound}
    \real(\Phi_Y(\vec \mu)) = e^{-\frac{1}{2} \vec \mu^T N \vec \mu}( 1 + \varepsilon_1(\vec \mu))
  \end{equation}
and $|\varepsilon_1(\vec \mu)| < \frac{1}{6} (d \delta)^4 e^{\frac{1}{2} d^2 \delta^2}$.  Moreover,
  \begin{equation} \label{phi_imag_bound}
    |\imag(\Phi_Y(\vec \mu))| \leq \frac{(d \delta)^3}{6} \, .
  \end{equation}
Further, if $d \delta < 1$, then for $\vec \mu \in T_{\vec 0}^{\delta}$ we have
  \begin{equation} \label{phi_real_lower_bound}
    \real(\Phi_Y(\vec \mu)) \geq \frac{1}{3}.
  \end{equation}  
\end{lemma}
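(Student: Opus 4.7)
The plan is to Taylor-expand $\Phi_Y(\vec \mu) = \E[e^{i \vec \mu \cdot Y_1}]$, exploiting that $Y_1$ is mean zero and that Proposition \ref{second_moment} identifies $\vec \mu^T N \vec \mu$ with the second moment $\E[(\vec \mu \cdot Y_1)^2]$. Before expanding, I reduce to a vector of small coordinates. By the definition of $T_{\vec 0}^{\delta}$ we may write $\vec \mu \equiv \gamma \vec 1 + \vec \zeta \pmod{2\pi}$ with $|\zeta_{\{i,j\}}| < \delta$; since $\Phi_Y$ is $2\pi$-periodic coordinate-wise, combined with $\vec 1 \cdot Y_1 = 0$ (from the proof of Lemma \ref{multiset_lemma}) and Proposition \ref{char_function_properties}, this yields $\Phi_Y(\vec \mu) = \Phi_Y(\vec \zeta)$. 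Likewise, Proposition \ref{N_is_singular} ($N \vec 1 = \vec 0$) lets me pass from $\vec \mu^T N \vec \mu$ to $\vec \zeta^T N \vec \zeta$. Because each coordinate of $Y_1 = Z(\vec \xi) - C_2 \vec 1$ is bounded in absolute value by $1$, the reduced argument satisfies $|\vec \zeta \cdot Y_1| \leq d\delta$ almost surely, which is the size control I will feed into every Taylor bound that follows.

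With the reduction in hand, \eqref{phi_imag_bound} comes from the elementary estimate $|\sin(x) - x| \leq |x|^3/6$: since $\E[\vec \zeta \cdot Y_1] = 0$ (Remark \ref{mean_zero}),
\[
|\imag \Phi_Y(\vec \mu)| = \left|\E[\sin(\vec \zeta \cdot Y_1) - \vec \zeta \cdot Y_1]\right| \leq \frac{(d\delta)^3}{6}.
\]
For the real part, I would use $\bigl|\cos(x) - 1 + x^2/2\bigr| \leq x^4/24$ together with Proposition \ref{second_moment} to obtain the additive expansion
\[
\real \Phi_Y(\vec \mu) = 1 - \tfrac{1}{2} \vec \zeta^T N \vec \zeta + R, \qquad |R| \leq \frac{(d\delta)^4}{24}.
\]

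To convert this into the multiplicative form \eqref{phi_real_bound}, I would compare $1 - \tfrac12 \vec \zeta^T N \vec \zeta$ with $e^{-\tfrac12 \vec \zeta^T N \vec \zeta}$ using the standard inequality $|e^{-t} - (1 - t)| \leq t^2/2$ for $t \geq 0$; this applies because $N$ is positive semidefinite (it is the covariance matrix of $Y_1$, by Proposition \ref{second_moment}) and $\vec \zeta^T N \vec \zeta \leq (d\delta)^2$. The resulting error is at most $(d\delta)^4/8$, which together with the cosine remainder gives $\bigl|\real \Phi_Y(\vec \mu) - e^{-\tfrac12 \vec \zeta^T N \vec \zeta}\bigr| \leq (d\delta)^4/6$. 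Dividing by $e^{-\tfrac12 \vec \zeta^T N \vec \zeta} \geq e^{-\tfrac12 d^2 \delta^2}$ delivers the claimed bound on $\varepsilon_1(\vec \mu)$. Finally, \eqref{phi_real_lower_bound} reduces to a one-line numerical check: when $d\delta < 1$ we have $e^{-\tfrac12 \vec \zeta^T N \vec \zeta} \geq e^{-1/2}$ and $|\varepsilon_1(\vec \mu)| \leq \tfrac16 e^{1/2}$, so $\real \Phi_Y(\vec \mu) \geq e^{-1/2} - \tfrac16 > \tfrac13$. I expect no substantive obstacle; the one place requiring care is the initial reduction, where I rely on both the $2\pi$-periodicity of $\Phi_Y$ and the fact that $\vec 1$ lies in both the null-set of the quadratic form (via $N\vec 1 = \vec 0$) and $\Lambda_Y$, so that the entire expansion may be carried out at the small representative $\vec \zeta$.
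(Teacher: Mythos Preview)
Your proposal is correct and follows essentially the same route as the paper: reduce to the small representative $\vec\zeta$ using $\vec 1 \cdot Y_1 = 0$ and $N\vec 1 = \vec 0$, Taylor-expand $\cos$ and $\sin$ (the paper phrases this via the complex exponential remainder bounds, but the content is identical), invoke Proposition~\ref{second_moment} for the second moment, compare $1 - \tfrac12 \vec\zeta^T N \vec\zeta$ with $e^{-\tfrac12 \vec\zeta^T N \vec\zeta}$, and combine the two $(d\delta)^4/24$ and $(d\delta)^4/8$ errors into $(d\delta)^4/6$. Your final numerical check $e^{-1/2} - \tfrac16 > \tfrac13$ is a slightly cleaner version of the paper's own lower-bound computation.
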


\begin{proof}
For this proof, we will mimic the proof of Lemma 3.1 in \cite{wdl_levin}. Since $\vec \mu \in T_{\vec 0}^{\delta}$, we have that $\vec \mu \equiv \gamma \vec 1 + \vec \zeta \pmod{2 \pi}$, where $\gamma \in [0, 2\pi)$ and $|\zeta_{\{i,j\}}| < \delta$ for all $\{i, j\}$. Because we are concerned only with bounds on $|\Phi_Y(\vec \mu)|$ and $\Phi_Y$ is $2 \pi$-periodic, we will assume without loss of generality that
  \begin{equation} \label{tube_0_decomp}
    \vec \mu = \gamma \vec 1 + \vec \zeta
  \end{equation}
 where $|\zeta_{\{i,j\}}| < \delta$ for all $\{i,j\}$.  
We begin with the remainder bounds on Taylor polynomials for $e^z$.  If $a \geq 0$ and $b$ is real, we have
  \begin{align}
    \left| e^{-a} - \sum_{s=0}^j \frac{(-a)^s}{s!} \right| & \leq \min \left\{ \frac{2|a|^j}{j!}, \frac{|a|^{j+1}}{(j+1)!} \right\}, \label{taylor_bound_1}\\
    \left| e^{ib} - \sum_{s=0}^j \frac{(ib)^s}{s!} \right| & \leq \min \left \{ \frac{2|b|^j}{j!}, \frac{|b|^{j+1}}{(j+1)!} \right\}. \label{taylor_bound_2}
  \end{align}
  For a reference, one can find \eqref{taylor_bound_1} as \cite{billingsley}*{equation 26.4}; \eqref{taylor_bound_2} is proved similarly.  Using \eqref{taylor_bound_1} with $j = 1$ shows that 
  \begin{equation} \label{taylor_mean_1}
    \left| e^{-\frac{1}{2} \vec \mu^T N \vec \mu} - \left(1 - \frac{1}{2} \vec \mu^T N \vec \mu \right) \right| \leq  \frac{1}{8} (\vec \mu^T N \vec \mu)^2.
  \end{equation}
By \eqref{tube_0_decomp} and Proposition \ref{N_is_singular}, we note that
  \begin{align*}
    \vec \mu^T N \vec \mu & = (\gamma \vec 1^T + \vec \zeta^T) N (\gamma \vec 1 + \vec \zeta) \\
    & = \vec \zeta^T N \vec \zeta.
  \end{align*}
We note from the triangle inequality that
  \[ | \vec \zeta^T N \vec \zeta| \leq \sum_{\{a,b\}, \{c,d\}} |\zeta_{\{a,b\}} \zeta_{\{c,d\}} N_{\{a,b\},\{c,d\}}|   \]
and we observe that all coefficients of $N$ have absolute value at most $1$ since $0 \leq C_j < 1$ for $j = 2, 3, 4$.  Since the components of $\vec \zeta$ are bounded by $\delta$, it follows that 
  \begin{equation} \label{muNmu_bound}
    |\vec \mu^T N \vec \mu| < \sum_{\{a,b\}, \{c,d\}} \delta^2 = d^2 \delta^2\, .
  \end{equation}    
Using this in conjunction with \eqref{taylor_mean_1} establishes that
  \begin{equation} \label{taylor_mean_2}
     \left| e^{-\frac{1}{2} \vec \mu^T N \vec \mu} - \left(1 - \frac{1}{2} \vec \mu^T N \vec \mu \right) \right| \leq  \frac{1}{8} d^4 \delta^4.
  \end{equation}
  
Next, let $\vec y$ be any vector in $V_{v,k}$.  For convenience of notation, we set $W(\vec y) = Z(\vec y) - C_2 \vec 1$.  Using \eqref{taylor_bound_2} with $j = 3$ implies that
  \begin{align*}
    &\left| e^{i \vec \mu \cdot W(\vec y)} - \left[ 1 + i \vec \mu \cdot W(\vec y) - \frac{ (\vec \mu \cdot W( \vec y))^2}{2} - \frac{i  (\vec \mu \cdot W(\vec y))^3}{6} \right] \right|  \leq \frac{1}{24} (\vec \mu \cdot W(\vec y))^4.
  \end{align*}
Using this with the fact that $|\real(z)| < |z|$ for any $z \in \C$, we see that
  \begin{equation}
     \left| \real(e^{i \vec \mu \cdot W(\vec y)}) - \left[1 -  \frac{1}{2} (\vec \mu \cdot W(\vec y))^2 \right] \right| \leq \frac{1}{24} (\vec \mu \cdot W(\vec y))^4. \label{taylor_mean_3a}
  \end{equation}
We now let $\vec y$ be a random, uniformly-chosen element of $V_{v,k}$.  From \eqref{taylor_mean_3a}, we see that
  \begin{align}
   & \left| \E \left[ \real(e^{i \vec \mu \cdot W(\vec y)}) \right]  - \E \left[1 - \frac{1}{2} (\vec \mu \cdot W(y))^2 \right] \right| \nonumber \\
   & \qquad \quad  \leq \E \left| \real(e^{i \vec \mu \cdot W(\vec y)}) - \left[ 1 - \frac{1}{2}(\vec \mu \cdot W(\vec y))^2 \right]  \right| \nonumber \\
   & \qquad \quad \leq \frac{1}{24} \E [(\vec \mu \cdot W(\vec y))^4]. \label{taylor_mean_4a}
  \end{align}
Since $\real$ is linear, we have $\E[\real(e^{i \vec \mu \cdot W(\vec y)})] = \real(\Phi_Y(\vec \mu))$.  Hence, \eqref{taylor_mean_4a} and Proposition \ref{second_moment} combine to yield
  \begin{equation} \label{taylor_mean_5a}
    \left| \real(\Phi_Y(\vec \mu)) - \left[ 1 - \frac{1}{2} \vec \mu^T N \vec \mu \right] \right| \leq \frac{1}{24} \E[(\vec \mu \cdot W (\vec y))^4].
  \end{equation}

To obtain a preliminary bound on $\imag(\Phi_Y(\vec \mu))$, we set $j = 2$ in \eqref{taylor_bound_2} to obtain
  \begin{equation*}
    \left| e^{i \vec \mu \cdot W(\vec y)} - \left[ 1 + i \vec \mu \cdot W(\vec y) - \frac{1}{2} (\vec \mu \cdot W( \vec y))^2 \right] \right|
    \leq \frac{1}{6} |\vec \mu \cdot W(\vec y)|^3
  \end{equation*}
and since $|\imag(z)| < |z|$, we have
  \begin{equation*}
    \left| \imag(e^{i \vec \mu \cdot W(\vec y)}) - \vec \mu \cdot W(\vec y) \right| \leq \frac{1}{6} |\vec \mu \cdot W(\vec y)|^3.
  \end{equation*}
Using the same argument as for the real part, we see that if $\vec y$ is a random, uniformly-chosen element of $V_{v,k}$,
  \begin{equation*} 
    \left| \imag(\Phi_Y(\vec \mu)) - \E[\vec \mu \cdot W(\vec y)] \right| \leq \frac{1}{6} \E[|\vec \mu \cdot W(\vec y)|^3]
  \end{equation*}
and by Remark \ref{mean_zero} we have $\E[\vec \mu \cdot W(\vec y)] = 0$, so it follows that
  \begin{equation} \label{taylor_mean_7a}
    \left| \imag(\Phi_Y(\vec \mu)) \right| \leq \frac{1}{6} \E[|\vec \mu \cdot W(\vec y)|^3].
  \end{equation}

To prove \eqref{phi_real_bound} and \eqref{phi_imag_bound}, we need to bound the expectations in \eqref{taylor_mean_5a} and \eqref{taylor_mean_7a}.  For any $\vec y \in V_{v,k}$, we have $\vec 1 \cdot Z(\vec y) = \binom{k}{2}$, and $\vec 1 \cdot C_2 \vec 1 = \frac{k(k-1)}{v(v-1)} \frac{v(v-1)}{2} = \binom{k}{2}$; hence, using $\vec \mu = \gamma \vec 1 + \vec \zeta$ from \eqref{tube_0_decomp} shows that
  \[ \vec \mu \cdot W( \vec y) = (\gamma \vec 1 + \vec \zeta) \cdot (Z(\vec y) - C_2 \vec 1) = \vec \zeta \cdot W(y)\, .\]  For any $\vec y \in V_{v,k}$, the components of $W(\vec y)$ all have absolute value at most $1$.  Since the components of $\vec \zeta$ have absolute value at most $\delta$, by the triangle inequality we have 
  \begin{equation} \label{taylor_mean_8a}
    | \vec \mu \cdot W(\vec y)| \leq \sum_{\{a,b\}} | \zeta_{\{a,b\}}| \leq d \delta. 
  \end{equation}
Combining \eqref{taylor_mean_8a} with \eqref{taylor_mean_7a} yields \eqref{phi_imag_bound}.  Likewise, using \eqref{taylor_mean_8a} with \eqref{taylor_mean_5a} shows that
  \begin{equation*}
    \left| \real(\Phi_Y(\vec \mu)) - \left[ 1 - \frac{1}{2} \vec \mu^T N \vec \mu \right] \right| \leq \frac{(d \delta)^4}{24} 
  \end{equation*}
and combining this with \eqref{taylor_mean_2} via the triangle inequality gives
  \begin{equation*}
    \left| \real(\Phi_Y(\vec \mu)) - e^{-\frac{1}{2} \vec \mu^T N \vec \mu} \right| \leq \frac{(d \delta)^4}{6}.
  \end{equation*}Dividing both sides by $e^{-\frac{1}{2} \vec \mu^T N \vec \mu}$ yields
  \begin{equation*}
    \left| \frac{\real(\Phi_Y(\vec \mu))}{e^{-\frac{1}{2} \vec \mu^T N \vec \mu}} - 1 \right| \leq  \frac{1}{6} (d \delta)^4 e^{\frac{1}{2} \vec \mu^T N \vec \mu}
  \end{equation*}
and by \eqref{muNmu_bound}, we see that
  \begin{equation*}
    \left| \frac{\real(\Phi_Y(\vec \mu))}{e^{-\frac{1}{2} \vec \mu^T N \vec \mu}} - 1 \right| \leq \frac{1}{6} (d \delta)^4 e^{\frac{1}{2} d^2 \delta^2} \, .
  \end{equation*}
Therefore, we have
  \begin{equation*}
    \real(\Phi_Y(\vec \mu)) = e^{-\frac{1}{2} \vec \mu^T N \vec \mu} \left[ \frac{\real(\Phi_Y(\vec \mu))}{e^{-\frac{1}{2} \vec \mu^T N \vec \mu}} \right] = e^{-\frac{1}{2} \vec \mu^T N \vec \mu}(1 + \varepsilon_1(\vec \mu))
  \end{equation*}
where
  \begin{equation*}
    |\varepsilon_1(\vec \mu)| \leq \frac{1}{6} (d \delta)^4 e^{\frac{1}{2} d^2 \delta^2}
  \end{equation*}
which establishes \eqref{phi_real_bound}.

Finally, to establish \eqref{phi_real_lower_bound}, we note from \eqref{phi_real_bound} that
  \[ \real(\Phi_Y(\vec \mu)) \geq e^{-\frac{1}{2} \vec \mu^T N \vec \mu} \left( 1 - \frac{1}{6} (d \delta)^4 e^{\frac{1}{2} (d \delta)^2} \right) \]
and by \eqref{muNmu_bound} and the assumption that $(d \delta) < 1$, we see that
  \begin{align*}
    \real(\Phi_Y(\vec \mu)) & \geq \frac{1 - \frac{1}{6}(d \delta)^4 e^{\frac{1}{2} (d \delta)^2}}{e^{\frac{1}{2}(d \delta)^2}} \geq 1/3
  \end{align*}
as desired.
\end{proof}
\section{The Submatrix Determinant} \label{S:matrix_determinant}

We now reconsider the $d \times d$ matrix $N$ as defined in \eqref{matrix_coeff_def}.  As implied by Proposition \ref{N_is_singular} this matrix is singular.  Our primary concern in the upcoming calculations will not be $N$, but its $(d -1) \times (d -1)$ principal submatrix obtained by removing the row and column with index $\{v-1, v\}$.  We will denote this submatrix by $M$.  We will need to discuss the corresponding subspace $\R^{d-1} \subset \R^d$, so we specify that if our enumeration of the coordinates of $\R^d$ is
  \[ \{1,2\}, \{1,3\}, \dots, \{v-2, v\}, \{v-1, v\} \]
then the coordinates of $\R^{d-1}$ are enumerated as
  \[ \{1,2\}, \{1,3\}, \dots, \{v-2, v\} \]
to correspond to our definition of $M$.

\begin{lemma} \label{obnoxious_reparametrization}
With $M, N$ as previously defined, 
    \[  2 \pi \int_{[-\delta, \delta]^{d-1}} e^{-\frac{t}{2} \vec \mu^T M \vec \mu} \ud \vec \mu 
   \leq \int_{T_{\vec 0}^{\delta}} e^{-\frac{t}{2} \vec \theta^T N \vec \theta} \ud \vec \theta 
   \leq 2 \pi \int_{[-2\delta, 2\delta]^{d-1}} e^{-\frac{t}{2} \vec \mu^T M \vec \mu} \ud \vec \mu. \]
\end{lemma}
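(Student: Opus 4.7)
My plan rests on two structural facts: $N\vec 1 = \vec 0$ (Proposition \ref{N_is_singular}), so the quadratic form $\vec\theta^T N\vec\theta$ is invariant under addition of scalar multiples of $\vec 1$; and if the $\{v-1,v\}$-coordinate of a vector vanishes, then $\vec\zeta^T N \vec\zeta$ reduces to $\vec\eta^T M\vec\eta$, where $\vec\eta \in \R^{d-1}$ collects the remaining coordinates (this is precisely how $M$ was built). Together, these let me collapse the integral along the degenerate $\vec 1$-direction and reduce to the $(d-1)$-dimensional matrix $M$.

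First I would perform the linear change of variables $\gamma := \theta_{\{v-1,v\}}$ and $\eta_{\{i,j\}} := \theta_{\{i,j\}} - \theta_{\{v-1,v\}}$ for each $\{i,j\} \neq \{v-1,v\}$. This map has unit Jacobian, and by the two facts above the integrand simplifies to $e^{-(t/2)\vec\eta^T M \vec\eta}$, with no dependence on $\gamma$. Next I would identify the image of $T_{\vec 0}^{\delta}$ under this change of variables. Viewing things on the torus $(\R/2\pi\Z)^d$ (with $[-\pi,\pi)^d$ a fundamental domain), I expect the tube to become a product $(\R/2\pi\Z) \times E$ for some $E \subset \R^{d-1}$ sandwiched as
\[ (-\delta,\delta)^{d-1} \,\subset\, E \,\subset\, (-2\delta,2\delta)^{d-1}. \]
The inner inclusion is immediate: for $\vec\eta$ with each $|\eta_{\{i,j\}}| < \delta$, setting $\zeta_{\{v-1,v\}} = 0$ and $\zeta_{\{i,j\}} = \eta_{\{i,j\}}$ exhibits the required decomposition $\vec\theta \equiv \gamma\vec 1 + \vec\zeta \pmod{2\pi}$ with $|\zeta_{\{a,b\}}| < \delta$. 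The outer inclusion follows from the triangle inequality: any representation $\vec\theta \equiv \gamma_0\vec 1 + \vec\zeta \pmod{2\pi}$ with $|\zeta_{\{a,b\}}| < \delta$ forces $\eta_{\{i,j\}} \equiv \zeta_{\{i,j\}} - \zeta_{\{v-1,v\}} \pmod{2\pi}$, a quantity of absolute value strictly less than $2\delta$; the hypothesis $2\delta < \pi$, inherited from $\delta < \pi/(2(k-1))$ in Corollary \ref{disjoint_regions}, ensures this is the unique representative within a fundamental domain.

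Finally, Fubini --- integrating $\gamma$ first, over its period of length $2\pi$ --- yields the leading factor of $2\pi$, and the sandwich on $E$ combined with the nonnegativity of $e^{-(t/2)\vec\eta^T M \vec\eta}$ gives both inequalities of the lemma. The main obstacle I foresee is not any single calculation but the careful bookkeeping needed to legitimize the torus picture: one must verify that the change of variables descends to a genuine diffeomorphism of the torus, and that the integrand $e^{-(t/2)\vec\theta^T N \vec\theta}$ is interpreted consistently with the canonical $\gamma\vec 1 + \vec\zeta$ decomposition used throughout this section --- essentially the same ``WLOG $\vec\mu = \gamma\vec 1 + \vec\zeta$'' convention invoked in the proof of Lemma \ref{good_region_bounds}.
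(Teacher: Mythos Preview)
Your proposal is correct and is essentially the same approach as the paper's proof. The paper introduces an intermediate set $S_{\vec 0}^{\delta}$ (your tube with the additional constraint $\zeta_{\{v-1,v\}}=0$), establishes $S_{\vec 0}^{\delta}\subset T_{\vec 0}^{\delta}\subset S_{\vec 0}^{2\delta}$ via the same triangle-inequality observation you describe, and then applies exactly your unit-Jacobian change of variables (written as its inverse $g(\vec\nu)_{\{i,j\}}=\nu_{\{i,j\}}+\nu_{\{v-1,v\}}$) together with Proposition~\ref{N_is_singular} to collapse to the $M$-integral and pick up the factor of $2\pi$.
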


\begin{proof}
   We begin by reparametrizing the middle integral.  We define a region better suited for the upcoming reparametrization:
   \[ S_{\vec 0}^{\delta} = \{\vec \mu: \vec \mu \in [-\pi, \pi)^d \textrm{ and } \vec \mu \equiv \gamma \vec 1 + \vec \zeta \pmod{2 \pi} \textrm{ with } \gamma \in [0, 2 \pi), |\zeta_{\{i, j\}}| < \delta \textrm{ for all } i, j, \textrm{ and } \zeta_{\{v-1, v\}} = 0\}. \]
We note from \eqref{def_tube_lambda} that $S_{\vec 0}^{\delta} \subset T_{\vec 0}^{\delta}$ is clear.  Since $\gamma \vec 1 + \vec \zeta \equiv (\gamma + \zeta_{\{v-1, v\}}) \vec 1 + (\vec \zeta - \zeta_{\{v-1,v\}} \vec 1),$ the triangle inequality shows that $T_{\vec 0}^{\delta} \subset S_{\vec 0}^{2 \delta}$. Therefore, we have
 \begin{equation} \label{reparametrization}
   \int_{S_{\vec 0}^{\delta}} e^{-\frac{t}{2} \vec \theta^T N \vec \theta} \ud \vec \theta 
   \leq \int_{T_{\vec 0}^{\delta}} e^{-\frac{t}{2} \vec \theta^T N \vec \theta} \ud \vec \theta 
   \leq \int_{S_{\vec 0}^{2\delta}} e^{-\frac{t}{2} \vec \theta^T N \vec \theta} \ud \vec \theta .
 \end{equation}
 
To reparametrize the integral, we define a function $g: \R^d \to \R^d$ such that
  \begin{align*}
    g(\vec \nu)_{\{1,2\}} & = \nu_{\{1,2\}} + \nu_{\{v-1, v\}} \\
    g(\vec \nu)_{\{1,3\}} & = \nu_{\{1,3\}} + \nu_{\{v-1, v\}} \\
    & \, \, \,  \vdots \\
    g(\vec \nu)_{\{v-2, v\}} & = \nu_{\{v-2, v\}} + \nu_{\{v-1, v\}} \\
    g(\vec \nu)_{\{v-1, v\}} & = \nu_{\{v-1, v\}} .    
  \end{align*} 
It is easy to see that the Jacobian determinant of this transformation is $1$, and that
  \[g \left( [-\delta, \delta]^{d-1} \times [0, 2 \pi) \right) = S_{\vec 0}^{\delta} \, .\]
For convenience of notation, we write $\vec \nu^0 = (\nu_{\{1,2\}}, \dots, \nu_{\{v-2, v\}}, 0)^T$ and we set $\vec \theta= g(\vec \nu)$, so that $\vec \theta = \vec \nu^0 + \nu_{\{v-1, v\}} \vec 1$.  From Proposition \ref{N_is_singular}, we see that
  \begin{align*} \vec \theta^T N \vec \theta & = (\vec \nu^0 + \nu_{\{v-1, v\}} \vec 1)^T N (\vec \nu^0 + \nu_{\{v-1, v\}} \vec 1) \\
  & = (\vec \nu^0)^T N \vec \nu^0.
  \end{align*}
By applying the change of variables formula to the integral, we obtain
   \[ \int_{S_{\vec 0}^{\delta}} e^{-\frac{t}{2} \vec \theta^T N \vec \theta} \ud \vec \theta = \int_{0}^{2 \pi} \int_{-\delta}^{\delta} \dots \int_{-\delta}^{\delta} e^{-\frac{t}{2} (\vec \nu^0)^T N \vec \nu^0} \ud \nu_{\{1,2\}} \dots \ud \nu_{\{v-2, v\}} \, \ud \nu_{\{v-1, v\}} \]
and since the rightmost integrand no longer depends on $\nu_{\{v-1, v\}}$, we can integrate that variable to get
   \begin{equation}
     \int_{S_{\vec 0}^{\delta}} e^{-\frac{t}{2} \vec \theta^T N \vec \theta} \ud \vec \theta = 2 \pi \int_{-\delta}^{\delta} \dots \int_{-\delta}^{\delta} e^{-\frac{t}{2} (\vec \nu^0)^T N \vec \nu^0} \ud \nu_{\{1,2\}} \dots \ud \nu_{\{v-2, v\}} \, .  \label{changing_variables_again}
   \end{equation}

Next, we let $h: \R^d \to \R^{d-1}$ be the projection onto the first $d-1$ coordinates, and we set $\vec \mu = h(\vec \nu)$. (We introduce this notation only so that we have a convenient way to distinguish between vectors in $\R^d$ and in $\R^{d-1}$.)  Since the $\{v-1, v\}$ component of $\vec \nu^0$ is $0$, we have $(\vec \nu^0)^T N \vec \nu^0 = \vec \mu^T M \vec \mu$. Applying the change of variables formula to \eqref{changing_variables_again} then yields
   \[ \int_{S_{\vec 0}^{\delta}} e^{-\frac{t}{2} \vec \theta^T N \vec \theta} \ud \vec \theta = 2 \pi \int_{[-\delta, \delta]^{d-1}} e^{-\frac{t}{2} \vec \mu^T M \vec \mu} \ud \vec \mu \, . \]
Using this on the left and right of \eqref{reparametrization} completes the proof.
\end{proof}

Our strategy for estimating the integral in \eqref{integral_breakup_5} is as follows: we will use Proposition \ref{bad_region_decay} to show that the second integral term vanishes, and we will use Lemma \ref{good_region_bounds} to exchange the first integral term for a Gaussian integral involving $e^{-\frac{1}{2} \vec \theta^T N \vec \theta}$. Then, using Lemma \ref{obnoxious_reparametrization}, we will reparametrize this integral as one involving $e^{-\frac{1}{2} \vec \mu^T M \vec \mu}$. If we can establish that $M$ is positive definite and compute its determinant, then the remainder of the estimation is straightforward. We will first compute the determinant:

\begin{lemma} \label{determinant_calc}
  The $(d-1) \times (d-1)$ matrix $M$ has 
    \begin{equation} \label{det_M}
      \det(M) = 2 (k-1)^{d + v - 2} \left( \frac{v-k-1}{v-3} \right)^{d-v} \left( \frac{k(v-k)}{v-2} \right)^{d-1} \left( \frac{1}{v(v-1)} \right)^d .
    \end{equation}
\end{lemma}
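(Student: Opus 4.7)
The plan is to diagonalize $N$, read off its nonzero eigenvalues, and then extract $\det(M)$ via a standard identity for principal minors of a symmetric matrix whose one-dimensional kernel is spanned by $\vec 1$.

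First I would identify two explicit eigenspaces of $N$. Writing
\[ N = (C_2 - C_4) I + (C_3 - C_4) A + (C_4 - C_2^2) J, \]
where $A$ is the $0/1$ matrix with $A_{\{a,b\},\{c,d\}} = 1$ iff $|\{a,b\} \cap \{c,d\}| = 1$ and $J$ is the $d \times d$ all-ones matrix, the natural candidates are the dimension-$(v-1)$ subspace $V_1 = \{\vec x \in \R^d : x_{\{i,j\}} = f(i) + f(j) \text{ for some } f \in \R^v \text{ with } \sum_i f(i) = 0\}$ and the dimension-$(d-v)$ subspace $V_2 = (V_1 \oplus \operatorname{span}\vec 1)^{\perp}$. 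A direct computation that splits the sum defining $(N\vec x)_{\{a,b\}}$ by $|\{c,d\} \cap \{a,b\}|$ and uses $\sum_i f(i) = 0$ shows that every $\vec x \in V_1$ satisfies $N\vec x = \lambda_1 \vec x$ with $\lambda_1 = C_2 + (v-4) C_3 - (v-3) C_4$. The analogous computation on $\vec x \in V_2$, via the dual characterization $V_2 = \{\vec x : \sum_{j \ne i} x_{\{i,j\}} = 0 \text{ for every } i\}$, gives $N\vec x = \lambda_2 \vec x$ with $\lambda_2 = C_2 - 2 C_3 + C_4$. Together with the kernel direction from Proposition \ref{N_is_singular}, this exhausts all $d$ dimensions. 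Plugging in \eqref{definition_c_j} and using the factorizations $(v-2) + (k-2)(v-k-1) = (k-1)(v-k)$ and $(v-2)(v-3) - 2(k-2)(v-3) + (k-2)(k-3) = (v-k)(v-k-1)$, these simplify to
\[ \lambda_1 = \frac{k(k-1)^2(v-k)}{v(v-1)(v-2)}, \qquad \lambda_2 = \frac{k(k-1)(v-k)(v-k-1)}{v(v-1)(v-2)(v-3)}. \]

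Next, to pass from the spectrum of $N$ to $\det(M)$, I would observe that $N \operatorname{adj}(N) = \det(N) I = 0$ forces every column of $\operatorname{adj}(N)$ to lie in $\ker N = \operatorname{span}\vec 1$; by the symmetry of $N$, the rows of $\operatorname{adj}(N)$ are multiples of $\vec 1^T$ as well, so $\operatorname{adj}(N) = c\, \vec 1 \vec 1^T$ for some scalar $c$. The $\{v-1,v\}$ diagonal entry of $\operatorname{adj}(N)$ is exactly $\det(M)$, so $c = \det(M)$. To determine $c$, I would apply the matrix determinant lemma to $N + \epsilon \vec 1 \vec 1^T$: this rank-one perturbation shares eigenvectors with $N$ but replaces the kernel eigenvalue by $\epsilon d$, so its determinant is $\epsilon d \lambda_1^{v-1} \lambda_2^{d-v}$, while the lemma expresses the same quantity as $\epsilon \vec 1^T \operatorname{adj}(N) \vec 1 = \epsilon c d^2$. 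Comparing,
\[ \det(M) = \tfrac{1}{d} \lambda_1^{v-1} \lambda_2^{d-v}. \]

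The remainder is routine arithmetic: substitute the closed forms for $\lambda_1, \lambda_2$, use the exponent identities $(v-1) + (d-v) = d-1$ and $2(v-1) + (d-v) = d+v-2$ to collect terms, and absorb $\tfrac{1}{d} = \tfrac{2}{v(v-1)}$ so that $v^{d-1}(v-1)^{d-1}$ becomes $v^d(v-1)^d$ in the denominator. I expect the main obstacle to be the algebraic simplifications that produce the clean closed forms for $\lambda_1$ and $\lambda_2$: the two polynomial identities above are not manifestly obvious, and the final assembly requires care since several factors carry similar-looking exponents $d$, $d-1$, $d-v$, and $d+v-2$ that are easy to mix up.
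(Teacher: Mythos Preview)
Your proposal is correct and takes a genuinely different route from the paper.

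The paper never diagonalizes $N$. Instead it works with $M$ directly, decomposing it as $M = a_1 I_{d-1} + a_2 \vec x_{d-1}\vec x_{d-1}^T + a_3 QQ^T$ where $Q$ is an explicit $(d-1)\times v$ incidence matrix, and then peels off the rank-$1$ and rank-$v$ perturbations by iterating the Matrix Determinant Lemma together with the Woodbury and generalized Sherman--Morrison--Woodbury identities. This requires inverting several auxiliary matrices ($E$, $F$, $G$, a $2\times 2$ block $X$) and a substantial amount of algebra; the $k=2$ case is handled separately because $a_3=0$ there.

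Your approach exploits the fact that $N$ lies in the Bose--Mesner algebra of the Johnson scheme $J(v,2)$: the three common eigenspaces $\operatorname{span}\vec 1$, $V_1$, $V_2$ diagonalize $I$, $A$, and $J$ simultaneously, so the eigenvalues $\lambda_1,\lambda_2$ drop out immediately from the standard Johnson-graph spectrum $2(v-2),\,v-4,\,-2$ for $A$. The adjugate trick $\operatorname{adj}(N)=\det(M)\,\vec 1\vec 1^T$ then converts the product of nonzero eigenvalues into $\det(M)$ in one line. This is considerably cleaner, requires no case split at $k=2$, and as a bonus yields the full spectrum of $N$ (which the paper only partially recovers, e.g.\ in Corollary~\ref{M_positive_definite} via Cauchy interlacing). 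The paper's route, by contrast, is entirely self-contained and does not presuppose familiarity with association schemes or the Johnson-graph spectrum; it also illustrates how the same low-rank-update machinery used elsewhere in the paper (Lemma~\ref{matrixdetlemma}) suffices here.

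One small point worth making explicit in your write-up: the identity $\det(N+\epsilon\vec 1\vec 1^T)=\det(N)+\epsilon\,\vec 1^T\operatorname{adj}(N)\vec 1$ is the general (adjugate) form of the matrix determinant lemma, valid without assuming $N$ invertible; and the argument that $\operatorname{adj}(N)=c\,\vec 1\vec 1^T$ implicitly uses that $\ker N$ is exactly one-dimensional, which follows from $\lambda_1,\lambda_2>0$ under the standing hypotheses $k\ge 2$, $v-k\ge 2$.
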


The proof of this calculation is quite long and tedious, so we delay it until the end of this section. After obtaining this determinant, completing the strategy outlined above is not difficult.

\begin{cor} \label{M_positive_definite}
  The matrix $M$ is positive definite.
\end{cor}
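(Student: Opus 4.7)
The plan is to combine two facts about $M$: it is positive semi-definite (inherited from $N$ via the second-moment representation), and it has nonzero determinant (from Lemma \ref{determinant_calc}), which together force positive definiteness.

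First, $M$ is symmetric since it is a principal submatrix of the symmetric matrix $N$ from \eqref{matrix_coeff_def}. For positive semi-definiteness, I would take an arbitrary $\vec \mu \in \R^{d-1}$ and let $\tilde{\vec \mu} \in \R^d$ be the vector obtained by appending a $0$ in the $\{v-1,v\}$ coordinate. Because $M$ is the principal submatrix of $N$ gotten by deleting that coordinate, one has
\[ \vec \mu^T M \vec \mu = \tilde{\vec \mu}^T N \tilde{\vec \mu}. \]
By Proposition \ref{second_moment} applied to $\tilde{\vec \mu}$,
\[ \tilde{\vec \mu}^T N \tilde{\vec \mu} = \E\!\left[\left(\tilde{\vec \mu}\cdot (Z(\vec \xi)-C_2 \vec 1)\right)^2\right] \geq 0, \]
so $M$ is positive semi-definite.

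Next, I would appeal to Lemma \ref{determinant_calc}, which gives an explicit formula for $\det(M)$. Under the standing hypotheses $k \geq 2$ and $v-k \geq 2$ (so in particular $v \geq 4$), every factor appearing in \eqref{det_M} is strictly positive: $k-1 \geq 1$, $v-k-1 \geq 1$, $v-3 \geq 1$, $k(v-k) \geq 4$, $v-2 \geq 2$, and $v(v-1) \geq 12$. Therefore $\det(M) \neq 0$, so $M$ has no zero eigenvalue.

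Finally, since the eigenvalues of the symmetric PSD matrix $M$ are all non-negative and none of them vanishes (as their product equals the nonzero determinant), all eigenvalues are strictly positive, and hence $M$ is positive definite. The only real content is the PSD step through Proposition \ref{second_moment}; the rest is bookkeeping, and there is no genuine obstacle provided Lemma \ref{determinant_calc} is in hand.
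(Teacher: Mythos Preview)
Your proof is correct and follows the same overall strategy as the paper: establish that $M$ is positive semidefinite, then use the nonvanishing of $\det(M)$ from Lemma \ref{determinant_calc} to conclude positive definiteness. The one difference is in how positive semidefiniteness is obtained. The paper argues that $N$ is positive semidefinite (via Proposition \ref{second_moment}) and then invokes Cauchy's interlace theorem to transfer this to the principal submatrix $M$. You instead embed $\vec \mu \in \R^{d-1}$ into $\R^d$ by appending a zero and observe directly that $\vec \mu^T M \vec \mu = \tilde{\vec \mu}^T N \tilde{\vec \mu} \geq 0$. Your route is slightly more elementary, since it avoids citing an external eigenvalue-interlacing result and uses only the definition of a principal submatrix; the paper's route has the mild advantage of making explicit that the eigenvalues themselves interlace, though that extra information is not used anywhere.
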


\begin{proof}
  The quadratic form associated to matrix $N$ is positive semidefinite, since it corresponds to a nonnegative expectation in Propositon \ref{second_moment}. Hence, the eigenvalues of $N$ are all nonnegative.  By Cauchy's interlace theorem (see, for example, \cite{hwang}), the eigenvalues of $M$ are also all nonnegative.  But by examining their product, $\det(M)$, we note that this product is nonzero as long as $k \geq 2$ and $v - k \geq 2$.  This means that each eigenvalue is strictly positive and that $M$ is therefore positive definite.
\end{proof}

Since $M$ is positive definite, there is a unique symmetric, positive definite matrix $P$ such that $P^2 = M$.  In an upcoming integral computation, we will need to understand the set
  \[ P [-\delta, \delta]^{d-1} = \left\{ P \vec \mu: \vec \mu \in \R^{d-1} \textrm{ and } |\mu_{\{i, j\}}| < \delta \textrm{ for all } i, j    \right\}.  \]
Rather than actually computing this set, it will suffice for us to bound it.

\begin{prop} \label{scaling_sets}
  There exist positive constants $D_1, D_2$ which depend only on $v$ and $k$ such that for all $\delta > 0$,
   \[ [- D_1 \delta, D_1 \delta]^{d-1} \subset P[-\delta, \delta]^{d-1} \subset [-D_2 \delta, D_2 \delta]^{d-1}. \]
\end{prop}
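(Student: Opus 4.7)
The plan is to exploit the fact that $P$ is a single fixed invertible linear transformation whose entries depend only on $v$ and $k$; the parameter $\delta$ enters only through the scaling of the cube, so both containments reduce to bounding $\|P\|$ and $\|P^{-1}\|$ with respect to the $\ell^\infty$ norm on $\R^{d-1}$.

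For the upper containment, I would set $D_2 = \max_{i} \sum_{j} |P_{ij}|$, the maximum absolute row sum of $P$, which is a positive constant depending only on $v$ and $k$. For any $\vec \mu \in [-\delta, \delta]^{d-1}$, the $i$-th coordinate of $P \vec \mu$ satisfies
\[ |(P \vec \mu)_i| \;\leq\; \sum_{j} |P_{ij}|\,|\mu_j| \;\leq\; D_2 \delta, \]
so $P \vec \mu \in [-D_2 \delta, D_2 \delta]^{d-1}$, as required.

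For the lower containment, the key point is that $P$ is invertible: Lemma \ref{determinant_calc} gives $\det(M) \neq 0$ under our standing hypotheses $k \geq 2$ and $v - k \geq 2$, and Corollary \ref{M_positive_definite} ensures $M$ (hence $P$) is positive definite, so $P^{-1}$ exists and has entries depending only on $v$ and $k$. Set $D_1 = \bigl( \max_{i} \sum_{j} |(P^{-1})_{ij}| \bigr)^{-1}$, which is a positive constant depending only on $v$ and $k$. For any $\vec w \in [-D_1 \delta, D_1 \delta]^{d-1}$, the same row-sum estimate applied to $P^{-1}$ gives $|(P^{-1} \vec w)_i| \leq D_1^{-1} \cdot D_1 \delta = \delta$, hence $\vec \mu := P^{-1} \vec w \in [-\delta, \delta]^{d-1}$, and so $\vec w = P \vec \mu \in P[-\delta, \delta]^{d-1}$.

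There is no serious obstacle: the statement is a routine consequence of the fact that finite-dimensional linear maps and their inverses have finite operator norms, and the only ingredients beyond that are the invertibility of $M$ (supplied by Lemma \ref{determinant_calc}) and the observation that everything in sight depends only on $v$ and $k$, not on $\delta$.
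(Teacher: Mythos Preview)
Your proof is correct and takes essentially the same approach as the paper: both arguments exploit that $P$ is a fixed invertible linear map depending only on $v$ and $k$, so the image of a cube is sandwiched between cubes, with $\delta$ entering only as a scaling factor. The paper argues this abstractly (establish the containments for $\delta=1$, then scale by linearity), whereas you give explicit constants via the $\ell^\infty$ operator norms of $P$ and $P^{-1}$; this is a minor cosmetic difference, not a different route.
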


\begin{proof}
  Since $P$ is positive definite, the linear transformation corresponding to $P$ maps the box $[-1, 1]^{d-1}$ to some nondegenerate subset of $\R^{d-1}$.  Therefore, there are constants $D_1$ and $D_2$ such that
  \[ [-D_1, D_1]^{d-1} \subset P[-1, 1]^{d-1} \subset [-D_2, D_2]^{d-1}. \]
These constants depend on the matrix $P$, which is defined in terms of the matrix $M$, which depends only on the constants $v$ and $k$.  We scale these sets by a factor of $\delta$ and exploit the linearity of the transformation associated to matrix $P$ to obtain
  \[[-D_1\delta, D_1 \delta]^{d-1} \subset P[-\delta, \delta]^{d-1} \subset [-D_2 \delta, D_2 \delta]^{d-1} \]
as desired.
\end{proof}

\begin{rmk}
  The salient detail of Proposition \ref{scaling_sets} is that $D_1$ and $D_2$ do not depend on $\delta$. This proposition will be needed when employing the aforementioned Gaussian integral techniques.
\end{rmk}

The remainder of this section is dedicated to computing $\det(M)$.  The first step toward this goal is finding a convenient expression of $N$ in terms of elementary matrices.  We remark here that at several points in the upcoming calculations, we will refer to $1 \times 1$ matrices, to their entries, and to their determinants interchangeably.  

Fix $n \in \N$.  We will denote the $n \times n$ identity matrix by $I_n$.  We will define $\vec x_n$ to be the vector in $\R^n$ with all entries $1$; i.e.
  \begin{equation} \label{def_x_r}
    \vec x_n = (1, \dots, 1)^T.
  \end{equation}
We will also define $y_n \in \R^n$ to be the vector with the last two entries $1$ and all other entries $0$; i.e.
  \begin{equation} \label{def_y_r}
    \vec y_n = (0, \dots, 0, 1, 1)^T.
  \end{equation}
We collect some useful computations involving these vectors:
  \begin{align}
    \vec x_n \vec x_n^T & = \label{xr_xrt} \left[ \begin{array}{ccc}  
      1& \dots &1 \\ 
      \vdots & \ddots & \vdots \\ 
      1 & \dots & 1
      \end{array} \right] \\ 
    \label{xrt_xr} \vec x_n^T \vec x_n & = n \\
    \label{yr_yrt} \vec y_n \vec y_n^T & = \left[ \begin{array}{ccccc}
      0 & \dots & 0 & 0 & 0 \\
      \vdots & \ddots & \vdots & \vdots & \vdots \\
      0 & \dots & 0 & 0 & 0 \\
      0 & \dots & 0 & 1 & 1 \\
      0 & \dots & 0 & 1 & 1
      \end{array} \right] \\
    \label{yrt_yr} \vec x_n^T \vec y_n = \vec y_n^T \vec x_n = \vec y_n^T \vec y_n & = 2
  \end{align}

We recall from Definition \ref{defn_alpha_beta}  that $\vec \beta^a$ is defined by $\beta^a_{\{i,j\}} = 1$ if $i = a$ or $j = a$ and $\beta^a_{\{i,j\}} = 0$ otherwise.  We let $\vec \chi^a$ be the vector obtained by truncating the $\{v-1, v\}$ coordinate from $\vec \beta^a$, so that $\vec \chi^a \in \R^{d-1}$.  We define a $(d - 1) \times v$ matrix $Q$ by
  \begin{equation} \label{def_Q}
    Q = \left[ \begin{array}{cccc} \vec \chi^1 & \vec \chi^2 & \dots & \vec \chi^v \end{array} \right].
  \end{equation}
For example, if $v = 5$, then
  \[
    Q = 
     \left[ \begin{array}{ccccc} 
      1 & 1 & 0 & 0 & 0 \\ 
      1 & 0 & 1 & 0 & 0 \\
      1 & 0 & 0 & 1 & 0 \\
      1 & 0 & 0 & 0 & 1 \\
      0 & 1 & 1 & 0 & 0 \\
      0 & 1 & 0 & 1 & 0 \\
      0 & 1 & 0 & 0 & 1 \\
      0 & 0 & 1 & 1 & 0 \\
      0 & 0 & 1 & 0 & 1
    \end{array} \right]
   \quad \begin{array}{c} \{1,2\} \\ \{1,3\} \\ \{1,4\} \\ \{1,5\} \\ \{2,3\} \\ \{2,4\} \\ \{2,5\} \\ \{3,4\} \\ \{3,5\} \end{array}
  \]
where the labels to the right denote the standard coordinate enumeration of $\R^{d-1}$.  We note that of all the vectors $\vec \beta^a$, the only ones that had a (now removed) 1 in the $\{v-1, v\}$ coordinate are $\vec \beta^{v-1}$ and $\vec \beta^v$.  

The primary importance of the matrix $Q$ is the computation of the $(d -1) \times (d-1)$ matrix $Q Q^T$, which can be found by examining the inner products of rows $\{a,b\}$ and $\{c,d\}$ of $Q$:
  \begin{equation} \label{Q_QT}
    (Q Q^T)_{\{a,b\},\{c,d\}} = \begin{cases}
      2, & |\{a,b\} \cap \{c,d\}| = 2 \\
      1, & |\{a,b\} \cap \{c,d\}| = 1 \\
      0, & |\{a,b\} \cap \{c,d\}| = 0 .
    \end{cases}
  \end{equation}
Comparing this computation with \eqref{matrix_coeff_def} sheds light on why $Q$ is a useful matrix.  We will also need to consider the $v \times v$ matrix $Q Q^T$, which can be expressed as
  \begin{equation} \label{QT_Q}
    Q^T Q = (v-2) I_v + \vec x_v \vec x_v^T - \vec y_v \vec y_v^T .
  \end{equation}
To see this, we consider the inner products of columns of the matrix $Q$.  The inner product of any column with itself is the number of 1's in that column, which is $v-1$ for all but the last two columns and is $v-2$ for the last two columns; these agree with the diagonal entries of the sum in \eqref{QT_Q}.  Similarly, the inner product of distinct columns $i$ and $j$ is $1$, corresponding to the $1$ found in the $\{i,j\}$ row of each column.  The exception is if $i = v-1$ and $j = v$ (or vice versa), where the inner product is $0$. These entries are also given by the sum in \eqref{QT_Q}.

We also make note of the following computation, to be used when computing $\det(M)$:
  \begin{equation} \label{xT_Q}
    \vec x_{d-1}^T Q = (v-1) \vec x_v^T - \vec y_v^T .
  \end{equation}
This follows from fact the every column in $Q$ has $v-1$ entries equal to $1$, except for the last two, which have only $v-2$ entries equal to $1$.

We are ready to express our matrix $M$ of interest in terms of these constituent parts:
\begin{prop} \label{decomposing_M}
  With matrices $M$, $I_{d-1}$, $\vec x_{d-1}$, $Q$, and coefficients $C_i$ as previously defined, and with $a_1 = C_2 - 2C_3 + C_4$, $a_2 = C_4 - C_2^2$, and $a_3 = C_3 - C_4$,
    \begin{equation} \label{M_decomposed_eq}
      M = a_1 I_{d-1} + a_2 \vec x_{d-1} \vec x_{d-1}^T + a_3 Q Q^T .
    \end{equation}
\end{prop}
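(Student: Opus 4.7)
The plan is to verify the claimed decomposition entry by entry, by comparing the matrix on the right-hand side of \eqref{M_decomposed_eq} against the defining entries of $M$ in \eqref{matrix_coeff_def}. Since this is an identity between two $(d-1)\times(d-1)$ symmetric matrices whose entries depend only on the quantity $|\{a,b\}\cap\{c,d\}|\in\{0,1,2\}$, it suffices to check agreement in each of these three cases.

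First I would record the entry-by-entry behavior of each constituent: $I_{d-1}$ contributes $1$ on the diagonal and $0$ off-diagonal; $\vec x_{d-1}\vec x_{d-1}^T$ contributes $1$ in every entry by \eqref{xr_xrt}; and $QQ^T$ contributes $2$, $1$, or $0$ according as $|\{a,b\}\cap\{c,d\}|$ equals $2$, $1$, or $0$, by the computation already recorded in \eqref{Q_QT}. Adding these with coefficients $a_1,a_2,a_3$ yields the three equations
\begin{align*}
  a_1+a_2+2a_3 &= C_2-C_2^2 \qquad \text{(diagonal case, $|\{a,b\}\cap\{c,d\}|=2$)},\\
  a_2+a_3 &= C_3-C_2^2 \qquad \text{(one-element overlap)},\\
  a_2 &= C_4-C_2^2 \qquad \text{(no overlap)}.
\end{align*}

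Next I would solve this triangular system. The third equation forces $a_2=C_4-C_2^2$, which matches the stated value. Substituting into the second equation gives $a_3=(C_3-C_2^2)-(C_4-C_2^2)=C_3-C_4$, matching the stated value of $a_3$. Substituting both into the first equation gives $a_1=(C_2-C_2^2)-(C_4-C_2^2)-2(C_3-C_4)=C_2-2C_3+C_4$, matching the stated value of $a_1$. Since the assumed coefficients satisfy all three equations simultaneously, the identity \eqref{M_decomposed_eq} holds on every entry, and therefore as a matrix equation.

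There is no genuine obstacle here; the entire content is that $I_{d-1}$, $\vec x_{d-1}\vec x_{d-1}^T$, and $QQ^T$ together span the three-dimensional space of symmetric $(d-1)\times(d-1)$ matrices whose entries depend only on $|\{a,b\}\cap\{c,d\}|$, and the target matrix $M$ lies in this space by construction. The only thing to verify is that the linear system expressing the $M$ entries in this basis has the stated solution, which is the straightforward substitution above.
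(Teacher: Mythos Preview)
Your proof is correct and follows essentially the same approach as the paper: both verify the identity entry by entry over the three cases $|\{a,b\}\cap\{c,d\}|\in\{0,1,2\}$, using the known contributions of $I_{d-1}$, $\vec x_{d-1}\vec x_{d-1}^T$, and $QQ^T$ from \eqref{xr_xrt} and \eqref{Q_QT}. The only cosmetic difference is that the paper substitutes the given $a_1,a_2,a_3$ directly and checks the three resulting values, whereas you solve the triangular system for $a_1,a_2,a_3$; the content is identical.
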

\begin{proof}
  Let $R = a_1 I_{d-1} + a_2 \vec x_{d-1} \vec x_{d-1}^T + a_3 Q Q^T$.  We will verify that these entries of $R$ agree with the entries in \eqref{matrix_coeff_def} by using \eqref{xr_xrt} and \eqref{Q_QT}.  A coordinate pair of the form $(\{a,b\},\{a,b\})$ (i.e. one on the diagonal of $R$) receives a contribution from all three parts of the sum in \eqref{M_decomposed_eq}:
  \begin{align*}
    R_{\{a,b\},\{a,b\}} & = a_1 + a_2 + 2 a_3 \\
    & = C_2 - C_2^2.
  \end{align*}
A coordinate pair of the form $(\{a,b\},\{a,c\})$ (i.e. exactly one shared component) does not receive a contribution from the identity matrix in \eqref{M_decomposed_eq}, so
  \begin{align*}
    R_{\{a,b\},\{a,c\}} & = a_2 + a_3 \\
    & = C_3 - C_2^2 .
  \end{align*}
Finally, a coordinate pair of the form $(\{a,b\}, \{c,d\})$ (i.e. no shared components) receives a contribution only from the $\vec x_{d-1} \vec x_{d-1}^T$ term in \eqref{M_decomposed_eq}:
  \begin{align*}
    R_{\{a,b\}, \{c,d\}} & = a_2 \\
    & = C_4 - C_2^2. \qedhere
  \end{align*}
\end{proof}

The useful characterization of $M$ in Proposition \ref{decomposing_M} will allow us to compute the determinant of $M$ when combined with the following lemmas:

\begin{lemma}[Matrix Determinant Lemma] \label{matrixdetlemma}
  Let $W$ be an invertible $n \times n$ matrix and let $U, V$ be $n \times m$ matrices.  Then
    \[ \det(W + UV^T) = \det(W) \det(I_m + V^T W^{-1} U) \, .\]
\end{lemma}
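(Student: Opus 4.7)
The plan is to prove this identity by computing the determinant of a single $(n+m) \times (n+m)$ block matrix in two different ways. Specifically, I would introduce the auxiliary matrix
\[ A = \begin{pmatrix} W & -U \\ V^T & I_m \end{pmatrix} \]
and exhibit two block LU-style factorizations of $A$: one that eliminates the bottom-left block using $W^{-1}$, and one that eliminates the top-right block using $I_m$. Each factorization expresses $\det A$ in terms of a product of block-triangular factors whose determinants are easy to read off, and equating them gives the desired identity.

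First I would verify the factorization
\[ A = \begin{pmatrix} I_n & 0 \\ V^T W^{-1} & I_m \end{pmatrix} \begin{pmatrix} W & -U \\ 0 & I_m + V^T W^{-1} U \end{pmatrix}, \]
which uses the invertibility of $W$ in an essential way. The left factor is lower block-triangular with identity blocks on the diagonal, so its determinant is $1$; the right factor is upper block-triangular, so its determinant equals the product of its diagonal block determinants. This yields $\det A = \det(W)\,\det(I_m + V^T W^{-1} U)$.

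Next I would verify the complementary factorization
\[ A = \begin{pmatrix} I_n & -U \\ 0 & I_m \end{pmatrix} \begin{pmatrix} W + UV^T & 0 \\ V^T & I_m \end{pmatrix}. \]
Again the first factor has determinant $1$, and the second is lower block-triangular with diagonal blocks $W + UV^T$ and $I_m$, giving $\det A = \det(W + UV^T)$. Equating the two evaluations of $\det A$ produces the lemma.

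The only thing requiring verification is that the two factorizations genuinely reproduce $A$, which is a routine block matrix multiplication. There is no real obstacle here: the lemma is a classical identity, and the invertibility hypothesis on $W$ is used only to define $V^T W^{-1}$ in the first factorization. In the subsequent application to computing $\det(M)$, this tool will be applied with $W = a_1 I_{d-1}$ (invertible since $a_1 \neq 0$) together with the rank-$1$ update $\vec x_{d-1} \vec x_{d-1}^T$ and the rank-$v$ update $Q Q^T$ coming from Proposition \ref{decomposing_M}.
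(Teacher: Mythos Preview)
Your argument is correct: both block factorizations of $A$ check out by direct multiplication, and the determinant of a block-triangular matrix is the product of the determinants of its diagonal blocks, so equating the two evaluations of $\det A$ gives the identity. This is the standard Schur-complement proof of the Matrix Determinant Lemma.

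The paper does not actually prove this lemma; it simply cites \cite{harville}*{Theorem 18.1} and moves on. So there is no approach to compare against---your self-contained argument supplies what the paper omits. One minor remark: your closing sentence about how the lemma will be applied (with $W = a_1 I_{d-1}$, etc.) is commentary rather than part of the proof, and strictly speaking the paper's first application of the lemma is to $M = E + a_2 \vec x_{d-1} \vec x_{d-1}^T$ with $W = E$, not directly with $W = a_1 I_{d-1}$; but this does not affect the correctness of the proof itself.
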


\begin{proof}
  See \cite{harville}*{Theorem 18.1}.
\end{proof}

\begin{lemma}[Generalized Sherman-Morrison-Woodbury Identity] \label{gsmwi}
  Let $W$ be an invertible $n \times n$ matrix and for $i = 1, \dots, L$ let $U_i, V_i$ be $n \times m$ matrices.  Define the $Lm \times Lm$ matrix $X$ by
  \[X = \left[ \begin{array}{cccc}
      I_m + V_1^T W^{-1} U_1 & V_1^T W^{-1} U_2 & \dots & V_1^T W^{-1} U_L \\
      V_2^T W^{-1} U_1 & I_m + V_2^T W^{-1} U_2 & \dots & V_2^T W^{-1} U_L \\
      \vdots & \vdots & \ddots & \vdots \\
      V_L^T W^{-1} U_1 & V_L^T W^{-1} U_2 & \dots & I_m + V_L^T W^{-1} U_L
  \end{array} \right] . \]
If $X$ is invertible, then the matrix $\left( W + \sum_{i=1}^L U_i V_i^T \right)$ is invertible, and its inverse is given by
   \begin{equation*}
   \left( W + \sum_{i=1}^{L} U_i V_i^T \right)^{-1} = W^{-1} - W^{-1}[\begin{array}{cccc} U_1 & \dots & U_L \end{array}] X^{-1} [\begin{array}{cccc} V_1^T & \dots & V_L^T \end{array}]^T W^{-1} \, .
   \end{equation*}
\end{lemma}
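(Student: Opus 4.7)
The plan is to reduce the generalized identity to the standard Sherman-Morrison-Woodbury identity by stacking the rank updates into a single block update. I would begin by defining the $n \times Lm$ block matrices $U = [\, U_1 \; U_2 \; \cdots \; U_L \,]$ and $V = [\, V_1 \; V_2 \; \cdots \; V_L \,]$. A direct block multiplication then shows that
\[
  U V^T = \sum_{i=1}^{L} U_i V_i^T,
\]
so the matrix whose inverse we seek is $W + UV^T$.

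Next I would appeal to the classical Sherman-Morrison-Woodbury identity (the $L=1$ case of the very statement being proved, which is standard and can be found in, e.g., \cite{harville}). Assuming $I_{Lm} + V^T W^{-1} U$ is invertible, it gives
\[
  (W + UV^T)^{-1} = W^{-1} - W^{-1} U \bigl( I_{Lm} + V^T W^{-1} U \bigr)^{-1} V^T W^{-1}.
\]
The key observation is then that the $Lm \times Lm$ matrix $I_{Lm} + V^T W^{-1} U$, when viewed as an $L \times L$ array of $m \times m$ blocks, has $(i,j)$-block equal to $V_i^T W^{-1} U_j$ for $i \neq j$ and $I_m + V_i^T W^{-1} U_i$ on the diagonal. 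Comparing with the definition of $X$ in the statement, this matrix is precisely $X$. Substituting $X$ for the middle factor and recognizing $W^{-1}[U_1 \; \cdots \; U_L] = W^{-1} U$ and $[V_1^T \; \cdots \; V_L^T]^T W^{-1} = V^T W^{-1}$ yields the formula stated in the lemma.

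There is essentially no hard step here; the proof is a bookkeeping exercise in block matrix algebra. The only point requiring a bit of care is verifying that $I_{Lm} + V^T W^{-1} U$ really does agree with $X$ block-by-block (which amounts to checking the diagonal versus off-diagonal entries separately), and confirming that the invertibility hypothesis on $X$ matches the invertibility hypothesis needed to apply the classical SMW identity. If one wishes to avoid quoting the $L=1$ case, the identity can instead be verified directly by multiplying the claimed right-hand side by $W + \sum_i U_i V_i^T$ on either side and simplifying using $X \cdot X^{-1} = I_{Lm}$; this yields $I_n$ after cancellation, completing the proof.
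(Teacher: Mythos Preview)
Your proposal is correct: stacking the updates into block matrices $U=[U_1\ \cdots\ U_L]$ and $V=[V_1\ \cdots\ V_L]$ reduces the statement to the classical Woodbury identity, and the block identification $I_{Lm}+V^TW^{-1}U=X$ is exactly right. The paper itself does not supply a proof but simply refers the reader to \cite{batista}, so your argument in fact goes beyond what the paper provides.
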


\begin{proof}
  See \cite{batista}.
\end{proof}

In particular, with $L = 1$ in Lemma \ref{gsmwi}, we obtain the following:
\begin{lemma}[Woodbury Matrix Identity] \label{woodbury}
  Let $W$ be an invertible $n \times n$ matrix and let $U, V$ be $n \times m$ matrices.  Define $X = I_m + V^T W^{-1} U$.  If $X$ is invertible, then $W + UV^T$ is invertible, and
  \[(W + UV^T)^{-1} = W^{-1} - W^{-1}UX^{-1} V^T W^{-1} \, .\]
\end{lemma}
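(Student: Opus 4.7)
The plan is to verify the identity by direct multiplication, which is the most transparent approach and avoids any reliance on the heavier machinery of Lemma \ref{gsmwi}. Since the claimed formula is explicit, it suffices to show that the candidate inverse multiplies with $W + UV^T$ to give $I_n$ on one side; then invertibility of $W + UV^T$ follows automatically, and the two-sided inverse is obtained either from uniqueness of inverses for square matrices or by running the analogous computation on the other side.

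Concretely, the plan is to compute
\[
(W + UV^T)\bigl(W^{-1} - W^{-1}UX^{-1}V^T W^{-1}\bigr)
\]
and simplify. Distributing the product gives $I_n - UX^{-1}V^T W^{-1} + UV^T W^{-1} - UV^T W^{-1} U X^{-1} V^T W^{-1}$. Factoring $U$ on the left and $V^T W^{-1}$ on the right of the last three terms produces
\[
I_n + U\bigl[I_m - X^{-1} - V^T W^{-1} U\, X^{-1}\bigr]V^T W^{-1}
= I_n + U\bigl[I_m - (I_m + V^T W^{-1} U)X^{-1}\bigr]V^T W^{-1}.
\]
The bracketed factor is $I_m - X X^{-1} = 0$ by the definition of $X$, so the whole expression collapses to $I_n$. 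This establishes that the matrix $W + UV^T$ has a right inverse equal to the displayed expression, and hence (since the matrix is square) is invertible with that inverse.

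There is no real obstacle: the only ingredients are associativity of matrix multiplication, the hypothesis that $W$ and $X$ are invertible (used to form $W^{-1}$ and $X^{-1}$), and the algebraic identity $I_m - X X^{-1} = 0$. An alternative, essentially equivalent, route would be to invoke Lemma \ref{gsmwi} directly with $L = 1$, $U_1 = U$, $V_1 = V$; the block matrix in that lemma reduces to the single block $X = I_m + V^T W^{-1} U$, and the general formula specializes verbatim to the claimed one. Either route yields the identity in a few lines, and both make it clear that the only hypotheses needed beyond invertibility of $W$ are precisely the invertibility of $X$.
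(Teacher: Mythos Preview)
Your direct verification is correct: the multiplication expands and collapses exactly as you describe, and the square-matrix argument gives the two-sided inverse. The paper does not carry out this computation; instead it obtains the Woodbury identity simply as the specialization $L=1$ of Lemma~\ref{gsmwi}, which you mention as your alternative route. So your primary argument differs from the paper's in that it is self-contained and does not rely on the generalized Sherman--Morrison--Woodbury machinery, at the modest cost of a short explicit calculation; the paper's route is a one-line specialization but presupposes the heavier Lemma~\ref{gsmwi}. Either is perfectly adequate here.
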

The basic strategy for computing $\det(M)$ will be to use the Matrix Determinant Lemma several times to trade the products $Q Q^T$ and $\vec x_{d-1} \vec x_{d-1}^T$ for their lower-rank counterparts, $Q^T Q$ and $\vec x_{d-1}^T \vec x_{d-1}$.  Executing this plan will require use of the generalized Sherman-Morrison-Woodbury and Woodbury Matrix Identities.

Finally, before computing $\det(M)$, we remark that if $k = 2$, we have $C_3 = C_4 = 0$ and therefore $a_3 = 0$ in Lemma \ref{decomposing_M}.  For technical reasons, this will require us to approach the computation differently when $k = 2$.  However, the formula given in Lemma \ref{determinant_calc} will still hold in this case, even though the proof is slightly different.

\begin{proof}[Proof of Lemma \ref{determinant_calc}]

We first assume that $k \geq 3$.  Recalling the definitions of $a_1, a_2, $ and $a_3$ in Proposition \ref{decomposing_M}, we have
  \begin{equation*}
    a_3 = C_3 - C_4 = \frac{k(k-1)(k-2)}{v(v-1)(v-2)} \left( 1 - \frac{k-3}{v-3} \right)
  \end{equation*}
so $a_3 > 0$.  Similarly, 
  \begin{align*} 
    a_1 & = C_2 - 2 C_3 + C_4 = \frac{k(k-1)}{v(v-1)} \left( 1 - 2 \frac{k-2}{v-2} + \frac{(k-2)(k-3)}{(v-2)(v-3)} \right) 
  \end{align*}
and since 
  \begin{align*}
    0 & < [(v-3) - (k-2)]^2 + (v-k - 1) \\
    & = (v-3)(v-2) -2(k-2)(v-3)+(k-2)(k-3)
  \end{align*}
it follows that $a_1 > 0$ as well.  We define a constant $w$ that will appear in several places:
  \begin{equation} \label{def_w}
    w = \frac{a_3}{a_1}(v-2) + 1
  \end{equation}
Since $a_3 > 0$ and $a_1 > 0$, it follows that $w \geq 1$.

Starting with the decomposition in Proposition \ref{decomposing_M}, we set
  \begin{equation} \label{def_E}
    E = a_1 I_{d-1} + a_3 Q Q^T
  \end{equation}
so that we have
  \[ M = E + a_2 \vec x_{d-1} \vec x_{d-1}^T \, .\]
Once we have shown that $E$ is invertible, by the Matrix Determinant Lemma we will have
  \begin{equation} \label{det_M_big_goal}
    \det(M) = \det(E) (1 + a_2 \vec x_{d-1}^T E^{-1} \vec x_{d-1}).
  \end{equation}
 This breaks the computation of $\det(M)$ into two smaller computations; we will handle the computation of $1 + \vec x_{d-1}^T E^{-1} \vec x_{d-1}$ first.  Since $E = a_1 I_{d-1} + a_3 Q Q^T$, so long as the matrix 
  \begin{equation} \label{defn_matrix_G}
    G = I_v + \frac{a_3}{a_1} Q^T Q
  \end{equation}
 is invertible, applying the Woodbury Matrix Identity to \eqref{def_E} yields
  \begin{equation} \label{det_M_E_inverse}
    E^{-1} = a_1^{-1}I_{d-1} - a_1^{-2} a_3 Q G^{-1} Q^T.
  \end{equation}
Applying \eqref{QT_Q} to the $Q^T Q$ expression in \eqref{defn_matrix_G} and using $w$ as in \eqref{def_w} gives
  \begin{equation} \label{det_M_decomposing_G}
    G = w I_v + \frac{a_3}{a_1} \vec x_v \vec x_v^T - \frac{a_3}{a_1} \vec y_v \vec y_v^T.
  \end{equation}

To argue that $G$ is invertible (hence, that $E$ is), and to compute $G^{-1}$, we use the generalized Sherman-Morrison-Woodbury Identity on \eqref{det_M_decomposing_G}.  Here, the matrix $X$ in Lemma \ref{gsmwi} is the $2 \times 2$ matrix which can be computed using \eqref{xrt_xr} and \eqref{yrt_yr}:
  \begin{equation} \label{det_M_def_X} X = \left[ \begin{array}{cc}
      1 + \frac{1}{w} \frac{a_3}{a_1} \vec x_v^T \vec x_v & - \frac{1}{w} \frac{a_3}{a_1} \vec x_v^T \vec y_v \\
      \frac{1}{w} \frac{a_3}{a_1} \vec x_v^T \vec y_v & 1 - \frac{1}{w} \frac{a_3}{a_1} \vec y_v^T \vec y_v
  \end{array} \right] = \left[ \begin{array}{cc} 
      1 + v\frac{a_3}{a_1 w} & - 2\frac{a_3}{a_1 w} \\ 
      2 \frac{a_3}{a_1 w} & 1 - 2 \frac{a_3}{a_1 w}
   \end{array} \right] \end{equation}
We note that 
  \begin{align*}
    \det(X) & = \left(1 + v \frac{a_3}{a_1 w} \right) \left(1 - 2 \frac{a_3}{a_1 w} \right) + 4\left( \frac{a_3}{a_1 w} \right)^2  \\
    & = \left( \frac{a_3}{a_1 w} \right)^2 \left( \left(\frac{a_1 w}{a_3} + v \right) \left( \frac{a_1 w}{a_3} -2 \right) + 4 \right).
  \end{align*}
Since $\frac{a_1 w}{a_3} = v-2 + \frac{a_1}{a_3} \geq 2$, it follows that this determinant is nonzero.  Hence, $X$ is invertible, which implies that $G$ is invertible, and therefore $E$ is invertible, justifying the use of \eqref{det_M_big_goal}. 

By inverting the $2 \times 2$ matrix $X$ and applying the generalized Sherman Morrison-Woodbury Identity, after some algebra we have
  \begin{equation} 
     G^{-1} = \frac{1}{w} \left( I_v -\frac{1}{(\frac{a_1 w}{a_3} + v)(\frac{a_1 w}{a_3} - 2)+4}  \left[ \begin{array}{cc} \vec x_v & - \vec y_v \end{array} \right]
      \left[ \begin{array}{cc} \frac{a_1 w}{a_3} - 2 & 2 \\ -2 & \frac{a_1 w}{a_3} + v \end{array} \right]
      \left[ \begin{array}{c} \vec x_v^T \\ \vec y_v^T \end{array} \right]
      \right) \label{det_M_G_inverse}
  \end{equation}
giving us an explicit formula for $G^{-1}$.  By \eqref{det_M_E_inverse}, this also gives an explicit formula for $E^{-1}$.  The right half of the computation in \eqref{det_M_big_goal} can be rewritten using \eqref{det_M_E_inverse} to obtain
  \begin{align*}
     & 1 + a_2 \vec x_{d-1}^T E^{-1} \vec x_{d-1} = 1 + \frac{a_2}{a_1} \vec x_{d-1}^T \vec x_{d-1} - \frac{a_2 a_3}{a_1^2} \vec x_{d-1}^T Q G^{-1} Q^T \vec x_{d-1}
  \end{align*}
  
and by using \eqref{xT_Q} to replace $\vec x^T_{d-1} Q$ and $Q^T \vec x_{d-1}$ we have
  \begin{equation} 
    1 + a_2 \vec x_{d-1}^T E^{-1} \vec x_{d-1} = 1 + \frac{a_2}{a_1} \vec x_{d-1}^T \vec x_{d-1} - \frac{a_2 a_3}{a_1^2} [(v-1)\vec x_v^T - \vec y_v^T] G^{-1} [(v-1) \vec x_v - \vec y_v] \label{det_M_right_side_1}
  \end{equation}
which can be computed due to the explicit formula for $G^{-1}$ given in \eqref{det_M_G_inverse}.  For convenience of notation, we set
  \begin{align*}
    U & = \left[ \begin{array}{cc} \vec x_v & - \vec y_v \end{array} \right] \\
    H & = \left[ \begin{array}{cc} \frac{a_1 w}{a_3} - 2 & 2 \\ -2 & \frac{a_1 w}{a_3} + v \end{array} \right] \\
    V^T & = \left[ \begin{array}{c} \vec x_v^T \\ \vec y_v^T \end{array} \right]
  \end{align*}
since these matrices appear in the more complicated portion of $G^{-1}$.  To expand the product in \eqref{det_M_right_side_1}, we observe four useful calculations that make use of \eqref{xrt_xr} and \eqref{yrt_yr}:
  \begin{align*}
    \vec x_v^T U H V^T \vec x_v & = (v-2)\left( \frac{a_1 w}{a_3}(v+2) - 2v \right) \\
    \vec x_v^T U H V^T \vec y_v & = 2(v-2) \left( \frac{a_1 w}{a_3} - 2 \right) \\
    \vec y_v^T U H V^T \vec x_v & = 2(v-2) \left( \frac{a_1 w}{a_3} - 2 \right) \\
    \vec y_v^T U H V^T \vec y_v & = 8-4v 
  \end{align*}
Using these calculations in \eqref{det_M_right_side_1}, along with \eqref{xrt_xr} and \eqref{yrt_yr} again and a great deal of algebra, we have
  \begin{align}
    \nonumber & 1 + a_2\vec x_{d-1}^T E^{-1} \vec x_{d-1} = 1 + \frac{a_2}{a_1} \bigg[d-1 - \frac{w-1}{w} \bigg\{v^2 - 3 - \frac{v-2}{(\frac{a_1 w}{a_3} + v ) (\frac{a_1 w}{a_3} -2 ) + 4}  \\
    & \qquad \quad \times \left( (v-3)(v^2 + v - 4) + \frac{a_1}{a_3} (v-1)(v+3) \right) \bigg\} \bigg]. \label{det_M_right_final}
  \end{align}

This yields a formula for the second factor on the right-hand side of \eqref{det_M_big_goal}.

To find a formula the first factor on the right-hand side of \eqref{det_M_big_goal}, we seek to compute $\det(E)$.  By the Matrix Determinant Lemma, we have 
  \begin{align*} 
    \det(E) & = a_1^{d-1} \det \left(I_{d-1} + \frac{a_3}{a_1} Q Q^T \right)  = a_1^{d-1} \det \left( I_v + \frac{a_3}{a_1} Q^T Q \right)
  \end{align*}
so from \eqref{QT_Q}, we have
  \begin{align*}
    \det(E) &= a_1^{d-1} \det \left( w I_v + \frac{a_3}{a_1} \vec x_v \vec x_v^T - \frac{a_3}{a_1} \vec y_v \vec y_v^T \right) .
  \end{align*}
We set 
  \begin{equation} \label{det_M_def_F}
    F = w I_v + \frac{a_3}{a_1} \vec x_v \vec x_v^T
  \end{equation}
and we note that if $F$ is invertible, then by the Matrix Determinant Lemma, we have
  \begin{align} \label{det_M_det_E_2}
    \det(E) & = a_1^{d-1} \det(F) \left( 1 - \frac{a_3}{a_1} \vec y_v^T F^{-1} \vec y_v \right).
  \end{align}
To establish that $F$ is invertible and to compute $F^{-1}$, we use the Woodbury Matrix Identity on \eqref{det_M_def_F}.  Because 
  \[1 + \frac{a_3}{a_1 w} \vec x_v^T \vec x_v = 1 + \frac{a_3 v}{a_1 w} > 0 \]
it follows from Lemma \ref{woodbury} that $F$ is invertible, so that the use of \eqref{det_M_det_E_2} is indeed justified.  Moreover, from this lemma we obtain
  \begin{equation*} 
    F^{-1} = \frac{1}{w} \left( I_v - \frac{1}{\frac{a_1 w}{a_3}+v} \vec x_v \vec x_v^T \right).
  \end{equation*}
We combine this with \eqref{yrt_yr} and find, after some algebra, that
  \begin{equation} \label{det_M_det_E_3}
    1 - \frac{a_3}{a_1} \vec y_v^T F^{-1} \vec y_v = 1 - \frac{a_3}{w a_1} \left( 2 - \frac{4}{\frac{a_1 w}{a_3} + v} \right) \, . 
  \end{equation}

To find $\det(F)$, we apply the Matrix Determinant Lemma to \eqref{det_M_def_F} to see that
  \begin{align} 
    \det(F) & = w^v \det \left(I_v + \frac{a_3}{a_1 w} \vec x_v \vec x_v^T  \right) \nonumber \\
    & = w^{v-1} \left( w + \frac{a_3}{a_1}v \right). \label{det_M_det_F}
  \end{align}
By substituting the results of \eqref{det_M_det_F} and \eqref{det_M_det_E_3} into \eqref{det_M_det_E_2} and simplifying, we find that 
  \begin{equation} \label{det_M_det_E}
    \det(E) = a_1^{d-1} w^{v-2} \left( 2w^2 - w - 2 \frac{a_3}{a_1}(w-1) \right) .
  \end{equation}
From here, \eqref{det_M_det_E} and \eqref{det_M_right_final} yield the two factors of $\det(M)$ in \eqref{det_M_big_goal}.  We multiply these together and substitute the definition of $w$ in \eqref{def_w}.  Then, we substitute the values of $a_1, a_2, a_3$ in Proposition \ref{decomposing_M}; following this, using the definition of the $C_i$ constants and simplifying yields \eqref{det_M}.

  Finally, in the case where $k = 2$, we note that \eqref{det_M} reduces to the particularly simple expression
    \begin{equation} \label{det_M_k_2}
      \det(M) = \frac{1}{d^d}.
    \end{equation}
When $k = 2$, the coefficients $a_1, a_2, a_3$ are
  \begin{align*}
    a_1 & = C_2 = d^{-1}, \\
    a_2 & = -C_2^2 = -d^{-2}, \\
    a_3 & = 0.
  \end{align*}
The preceding proof does not work since $a_3$ appears in many denominators.  To verify that the formula in \eqref{det_M_k_2} still holds, we reconsider the decomposition of $M$ in Proposition \ref{decomposing_M}.  In this case,
  \begin{equation} \label{M_decomp_k_2}
    M = a_1 I_{d-1} + a_2 \vec x_{d-1} \vec x_{d-1}^T
  \end{equation}
so the determinant is much more straightforward than the case where $k \geq 3$. 
In particular, since $a_1 \neq 0$ we can apply the Matrix Determinant Lemma to \eqref{M_decomp_k_2}.  This gives
  \begin{align*}
    \det(M) & = \det(a_1 I_{d-1}) \left(1 + \frac{a_2}{a_1} \vec x_{d-1}^T \vec x_{d-1} \right) \\
    & = (d^{-1})^{d-1} \left( 1 - \frac{d^{-2}}{d^{-1}}(d-1) \right) \\
    & = d^{-d}
  \end{align*}
which matches \eqref{det_M_k_2} and completes the proof.
\end{proof}

\section{Proof of Main Theorem} \label{S:proof}

Our next task is to find suitable lower and upper bounds for the integral used to compute $ \PB_{v,k}^{(t)}(\vec 0, \vec 0)$.  With $D_1$ and $D_2$ defined as in Proposition \ref{scaling_sets}, we define two quantities of interest:
  \begin{align*}
    L(v, k, t, \delta) & = [1 + t^2(d \delta)^6]^{-1/2} \left[ 1 - \frac{1}{3}(d \delta)^4 \right]^t [1 - e^{-\frac{1}{2} t (D_1\delta)^2}]^{(d-1)/2}\\
    U(v, k, t, \delta) & = \left[ 1 + \frac{1}{4}(d \delta)^6 \right]^{t/2} \left[ 1 + \frac{1}{3} ( d \delta)^4 \right]^t  [1 - e^{- t (2 D_2 \delta)^2}]^{(d-1)/2}
  \end{align*}
  
\begin{thm} \label{return_prob_estimates_1}
 Suppose that $\delta < k^{-2} \nk^{-2} \left[ \frac{1}{6 \cdot 96^2} \left( \frac{2 \pi}{k-1} \right)^4 \right]$, and let $t \geq 2$ be any integer such that $t < 2 (d \delta)^{-3}$. If $t \frac{k(k-1)}{v(v-1)}$ is not an integer, then
  \begin{equation} \label{return_prob_bound_awful}
    \PB_{v,k}^{(t)}(\vec 0, \vec 0) = 0.
  \end{equation}
If $t \frac{k(k-1)}{v(v-1)}$ is an integer but $t \frac{k}{v}$ is not, then
  \begin{equation} \label{return_prob_bound_bad}
    \PB_{v,k}^{(t)}(\vec 0, \vec 0) \leq \exp \left( - \nk^{-1} \frac{11}{768} t \delta^2 \right) \, .
  \end{equation}
Finally, if both $t \frac{k(k-1)}{v(v-1)}$ and $t\frac{k}{v}$ are integers, then  
  \begin{equation} \label{return_prob_upper}
    \PB_{v,k}^{(t)}(\vec 0, \vec 0) \leq \frac{(k-1)^{v-1}}{\sqrt{(2 \pi t)^{d-1} \det(M)}} U(v , k, t, \delta) + e^{- \nk^{-1} \frac{11}{768} t \delta^2}
  \end{equation}
and
  \begin{equation} \label{return_prob_lower}
    \PB_{v,k}^{(t)}(\vec 0, \vec 0) \geq\frac{(k-1)^{v-1}}{\sqrt{(2 \pi t)^{d-1} \det(M)}} L(v , k, t, \delta) - e^{- \nk^{-1} \frac{11}{768} t \delta^2} \, .
  \end{equation}

\end{thm}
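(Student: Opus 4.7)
The plan is to address the three cases of the theorem in sequence by exploiting the integral decompositions \eqref{integral_breakup_4} and \eqref{integral_breakup_5} together with the bounds developed in Sections \ref{S:upper_bounds} and \ref{S:near_lambda}. Case 1 is immediate: when $t\frac{k(k-1)}{v(v-1)} \notin \Z$, the walk $Y_t$ is not supported on $\Z^d$, so $\PB_{v,k}^{(t)}(\vec 0, \vec 0) = 0$, giving \eqref{return_prob_bound_awful}. Case 2 follows by feeding \eqref{integral_breakup_4} — which discards the tube integral entirely in this range — directly into Proposition \ref{bad_region_decay}; the stated hypothesis on $\delta$ is precisely what that proposition requires, yielding \eqref{return_prob_bound_bad}.

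For Case 3 I start from \eqref{integral_breakup_5}. The integral over $R_{B,\equiv}^\delta \cup R_{C,\equiv}^\delta$ is again absorbed into the error term $e^{-\nk^{-1}\frac{11}{768}t\delta^2}$ by Proposition \ref{bad_region_decay}, and the real work becomes estimating
\[ J := \int_{T_{\vec 0}^{\delta}} \Phi_Y(\vec\theta)^t\,d\vec\theta. \]
Because $T_{\vec 0}^{\delta}$ is symmetric under negation and $\Phi_Y(-\vec\theta) = \overline{\Phi_Y(\vec\theta)}$, the integral $J$ is real, so I may replace $\Phi_Y^t$ by $\real(\Phi_Y^t)$ pointwise. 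Writing $\Phi_Y = R + iI$ and using the DeMoivre-type identity $\real((R+iI)^t) = R^t(1+(I/R)^2)^{t/2}\cos(t\arctan(I/R))$ — valid on $T_{\vec 0}^{\delta}$ since Lemma \ref{good_region_bounds} gives $R \geq 1/3$ — I bound the integrand from both sides. The upper bound uses $(1+(I/R)^2)^{t/2} \leq [1+\tfrac{1}{4}(d\delta)^6]^{t/2}$ together with $|\cos(\cdot)| \leq 1$, producing one factor in $U$. The lower bound leverages the hypothesis $t < 2(d\delta)^{-3}$, which keeps $t\arctan(I/R)$ bounded, and standard Taylor estimates on $\cos$ and $(1+x)^{t/2}$ then yield the factor $[1+t^2(d\delta)^6]^{-1/2}$ in $L$. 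In both cases Lemma \ref{good_region_bounds} lets me exchange $R^t$ for $e^{-\frac{t}{2}\vec\theta^T N\vec\theta}$ at the cost of the multiplicative factors $[1 \pm \tfrac{1}{3}(d\delta)^4]^t$, where the bound $|\varepsilon_1| < \tfrac{1}{3}(d\delta)^4$ follows from the bound on $\varepsilon_1$ together with the smallness of $d^2\delta^2$ forced by the hypothesis on $\delta$.

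What remains is a pure Gaussian estimate of $\int_{T_{\vec 0}^{\delta}} e^{-\frac{t}{2}\vec\theta^T N \vec\theta}\,d\vec\theta$. Lemma \ref{obnoxious_reparametrization} pinches this between $2\pi\int_{[-\delta,\delta]^{d-1}} e^{-\frac{t}{2}\vec\mu^T M \vec\mu}\,d\vec\mu$ and the corresponding integral over $[-2\delta,2\delta]^{d-1}$. Since $M$ is positive definite (Corollary \ref{M_positive_definite}), I change variables $\vec\nu = \sqrt{t}\,P\vec\mu$ with $P = M^{1/2}$, giving a Jacobian factor of $t^{-(d-1)/2}\det(M)^{-1/2}$ and converting the integrand to the standard Gaussian $e^{-|\vec\nu|^2/2}$. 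Proposition \ref{scaling_sets} sandwiches the image domain between the cubes $[-\sqrt{t}\,D_1\delta,\sqrt{t}\,D_1\delta]^{d-1}$ and $[-2\sqrt{t}\,D_2\delta,2\sqrt{t}\,D_2\delta]^{d-1}$. The elementary planar inequalities $\bigl(\int_{-a}^a e^{-x^2/2}\,dx\bigr)^2 \geq 2\pi(1 - e^{-a^2/2})$ (by inscribing a disk of radius $a$) and $\bigl(\int_{-a}^a e^{-x^2/2}\,dx\bigr)^2 \leq 2\pi(1-e^{-a^2})$ (by circumscribing a disk of radius $a\sqrt 2$), raised to the $(d-1)/2$ power, yield the truncation factors $[1-e^{-\tfrac{1}{2}t(D_1\delta)^2}]^{(d-1)/2}$ in $L$ and $[1-e^{-t(2D_2\delta)^2}]^{(d-1)/2}$ in $U$. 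Collecting the prefactors $(k-1)^{v-1}(2\pi)^{-d}\cdot 2\pi \cdot (2\pi)^{(d-1)/2}\, t^{-(d-1)/2}\det(M)^{-1/2}$ telescopes to $(k-1)^{v-1}/\sqrt{(2\pi t)^{d-1}\det M}$, the leading factor appearing in \eqref{return_prob_upper} and \eqref{return_prob_lower}.

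The principal obstacle is keeping the error terms organized through two separate approximations — the expansion of $\Phi_Y^t$ near its maximum and the change-of-variable-plus-truncation of the Gaussian — and verifying that they assemble into the precise forms of $L$ and $U$. In particular, the hypothesis $t < 2(d\delta)^{-3}$ is exactly what is needed to keep $t\arctan(I/R)$ small enough that the cosine factor survives into the lower bound, and the hypothesis on $\delta$ is exactly what is needed for both Proposition \ref{bad_region_decay} and the $R \geq 1/3$ estimate from Lemma \ref{good_region_bounds} to apply simultaneously.
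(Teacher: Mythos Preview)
Your proposal is correct and follows essentially the same route as the paper: the three cases, the symmetry reduction to $\real(\Phi_Y^t)$, the polar-form bounds on $\real(z^t)$ (which the paper packages as Lemma \ref{z^t_to_z}, citing de Launey--Levin), the passage to the Gaussian via Lemma \ref{good_region_bounds}, the $N\to M$ reparametrization via Lemma \ref{obnoxious_reparametrization}, the $P=M^{1/2}$ change of variables with Proposition \ref{scaling_sets}, and the inscribed/circumscribed-disk Gaussian truncation (Lemma \ref{gaussian_estimate}) all match. The only cosmetic difference is that you sketch the $\real(z^t)$ lower bound directly via $\cos(t\arctan(I/R))$ and Taylor estimates, whereas the paper invokes the prepackaged inequality \eqref{appendix_5} involving $\alpha(z,t)=1-\binom{t}{2}\beta^2$; both routes land on the same factor $[1+t^2(d\delta)^6]^{-1/2}$.
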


\begin{rmk}
  In the sequel, $\delta$ will be chosen to vary with $t$ in such a way that $t \delta^2$ diverges to infinity.  This will cause the bound in \eqref{return_prob_bound_bad} to tend to $0$, which reflects the fact that a balanced incomplete block design can only exist when $t \frac{k}{v}$ is an integer as shown in \eqref{BIBD_relation1}.  The terms $U(v, k, t, \delta)$ and $L(v, k, t, \delta)$ will also approach $1$, which will cause \eqref{return_prob_upper} and \eqref{return_prob_lower} to yield the asymptotics for the return probability of the random walk $Y_t$.  This will then give the asymptotics for the number of balanced incomplete block designs as $t$ increases.
\end{rmk}

\begin{rmk}
Since $k \geq 2$ and $v-k \geq 2$, we have $\nk \geq \binom{v}{2} = d$; hence, our assumption on $\delta$ implies in particular that $\delta < d^{-1}$, which will be referenced throughout the proof. 
\end{rmk}

We require some technical lemmas before proving Theorem \ref{return_prob_estimates_1}. 

\begin{defn} \label{defn_alpha_wdl}
  For any positive number $t$ and any complex number $z$, we set $\beta(z) = \imag(z)/\real(z)$ and $\alpha(z, t) = 1 - \binom{t}{2} \beta(z)^2$.
\end{defn}

\begin{lemma} \label{z^t_to_z}
  Let $t \geq 2$ be an integer, and let $z \in \C$ with $\real(z) > 0$ and $\alpha(z, t) > 0$.
  Then
      \begin{equation} \label{appendix_2}
        \real(z^t) \leq \real(z)^t \left( 1 + \left( \frac{\imag(z)}{\real(z)} \right)^2 \right)^{t/2}
      \end{equation}      
    and 
      \begin{equation} \label{appendix_5}
        \real(z^t) \geq \real(z)^t \left(1 + \left[ \frac{\imag(z)}{\real(z)} \right]^2 \right)^{t/2} \left(1 + \left[ \frac{t}{\alpha(z,t)} \right]^2 \left[ \frac{\imag(z)}{\real(z)} \right]^2 \right)^{-1/2}.
      \end{equation}
\end{lemma}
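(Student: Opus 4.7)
The plan is to reduce both inequalities to a single statement about $\cos(t\theta)$ via polar coordinates, and then to prove the resulting tangent bound by expanding $(1+i\beta)^t$ through the binomial theorem. Writing $z = r e^{i\theta}$ with $r > 0$ and $\theta \in (-\pi/2, \pi/2)$ (the latter since $\real(z) > 0$), we have $\beta(z) = \tan\theta$ and $\real(z) = r \cos\theta$, so the identity $\sec^2\theta = 1 + \tan^2\theta$ yields $r^t = \real(z)^t(1 + \beta(z)^2)^{t/2}$. This gives
\[ \real(z^t) = r^t \cos(t\theta) = \real(z)^t \bigl(1 + \beta(z)^2\bigr)^{t/2} \cos(t\theta), \]
so the upper bound \eqref{appendix_2} follows instantly from $\cos(t\theta) \leq 1$, while the lower bound \eqref{appendix_5} reduces to showing $\cos(t\theta) \geq (1 + (t\beta(z)/\alpha(z,t))^2)^{-1/2}$.

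For the lower bound, I would use the identity $\cos\phi = (1 + \tan^2\phi)^{-1/2}$ (valid when $\cos\phi > 0$) to reduce the goal further to the two statements $\cos(t\theta) > 0$ and $|\tan(t\theta)| \leq t|\beta(z)|/\alpha(z,t)$. Expanding $(1+i\beta)^t = P_t(\beta) + iQ_t(\beta)$ by the binomial theorem,
\[ P_t(\beta) = \sum_{j \geq 0}(-1)^j \binom{t}{2j}\beta^{2j}, \qquad Q_t(\beta) = \sum_{j \geq 0}(-1)^j \binom{t}{2j+1}\beta^{2j+1}, \]
and noting that $(1 + i\beta)^t$ also equals $(1 + \beta^2)^{t/2}e^{it\theta}$, we obtain $\tan(t\theta) = Q_t(\beta)/P_t(\beta)$, with $\cos(t\theta)$ agreeing in sign with $P_t(\beta)$. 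Both target statements then follow from the polynomial estimates
\[ P_t(\beta) \geq \alpha(z,t) = 1 - \binom{t}{2}\beta^2 \qquad \text{and} \qquad |Q_t(\beta)| \leq t|\beta|. \]

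The main technical step, and the main obstacle, is proving these two polynomial estimates under the hypothesis $\alpha(z,t) > 0$. Taking $\beta \geq 0$ without loss of generality, $P_t$ and $Q_t$ are alternating series, so it suffices to verify that their term magnitudes are non-increasing and then invoke the standard alternating-series truncation. The ratio of consecutive magnitudes in $P_t$ is
\[ \frac{\binom{t}{2j+2}\beta^{2j+2}}{\binom{t}{2j}\beta^{2j}} = \beta^2 \cdot \frac{(t-2j)(t-2j-1)}{(2j+1)(2j+2)}, \]
which is decreasing in $j$ and is at most $1$ at $j = 0$ precisely when $\binom{t}{2}\beta^2 \leq 1$; the analogous ratio $\beta^2(t-2j-1)(t-2j-2)/((2j+2)(2j+3))$ governs $Q_t$ and satisfies the same bound under the hypothesis $\alpha > 0$. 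Truncating each series after two terms yields the two displayed estimates, and dividing them gives $|\tan(t\theta)| \leq t|\beta|/\alpha$. Substituting back into the polar-form identity for $\real(z^t)$ completes the proof of the lower bound.
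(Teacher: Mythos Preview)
Your argument is correct. The polar decomposition reduces both bounds to statements about $\cos(t\theta)$, and the binomial expansion of $(1+i\beta)^t$ together with the alternating-series truncation cleanly delivers $P_t(\beta)\ge\alpha$ and $|Q_t(\beta)|\le t|\beta|$; the monotonicity check on the term ratios under the hypothesis $\binom{t}{2}\beta^2<1$ is exactly what is needed. One small wording slip: for $Q_t$ you truncate after the \emph{first} term, not two, but the conclusion $|Q_t|\le t|\beta|$ is what you use and it is right.

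The paper does not actually prove this lemma; it simply cites Proposition~A.2 of de~Launey--Levin and says the needed modifications are trivial. Your write-up therefore supplies a self-contained proof where the paper offers none. The polar-form/alternating-series route you take is the natural one and is presumably what underlies the cited result, so there is no substantive methodological divergence---you have just filled in what the paper left to a reference.
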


\begin{proof}
  This requires only trivial modifications to parts (i) and (iv) of Proposition A.2 in \cite{wdl_levin}.
\end{proof}

\begin{lemma} \label{gaussian_estimate}
  Let $\rho$ be a positive real number.  Then
    \[\sqrt{2 \pi(1 - e^{- \rho^2/2})} \leq \int_{- \rho}^{\rho} e^{-\frac{1}{2} x^2} \ud x \leq \sqrt{2 \pi(1 - e^{-\rho^2})}.\]
\end{lemma}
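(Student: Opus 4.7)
The plan is to square the integral and evaluate the resulting double integral in polar coordinates, then sandwich the square region of integration between an inscribed and a circumscribed disk.

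First, I would set $I(\rho) = \int_{-\rho}^{\rho} e^{-x^2/2}\,\ud x$ and observe, by Fubini, that
\[
I(\rho)^2 = \iint_{[-\rho,\rho]^2} e^{-(x^2+y^2)/2} \, \ud x \, \ud y.
\]
The integrand is radially symmetric and positive, so the value of this double integral is monotone in the region of integration. The square $[-\rho,\rho]^2$ contains the disk of radius $\rho$ centered at the origin, and is contained in the disk of radius $\rho\sqrt{2}$. Therefore
\[
\iint_{x^2+y^2 \le \rho^2} e^{-(x^2+y^2)/2}\,\ud x\,\ud y \;\le\; I(\rho)^2 \;\le\; \iint_{x^2+y^2 \le 2\rho^2} e^{-(x^2+y^2)/2}\,\ud x\,\ud y.
\]

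Next, I would convert each bounding integral to polar coordinates. For any $R>0$,
\[
\iint_{x^2+y^2 \le R^2} e^{-(x^2+y^2)/2}\,\ud x\,\ud y = \int_0^{2\pi}\!\!\int_0^R e^{-r^2/2} r\,\ud r\,\ud\theta = 2\pi\bigl(1 - e^{-R^2/2}\bigr).
\]
Applying this with $R = \rho$ on the left and $R = \rho\sqrt{2}$ on the right yields
\[
2\pi\bigl(1 - e^{-\rho^2/2}\bigr) \;\le\; I(\rho)^2 \;\le\; 2\pi\bigl(1 - e^{-\rho^2}\bigr).
\]
Taking (positive) square roots gives exactly the claimed inequality. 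There is no genuine obstacle here; this is the classical polar-coordinates Gaussian trick, and the only thing to be careful about is getting the radii of the inscribed and circumscribed disks right so that the exponents $\rho^2/2$ (lower bound) and $\rho^2$ (upper bound) come out correctly.
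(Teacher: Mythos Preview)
Your proof is correct and follows essentially the same approach as the paper: square the integral via Fubini, sandwich the square $[-\rho,\rho]^2$ between the inscribed disk of radius $\rho$ and the circumscribed disk of radius $\rho\sqrt{2}$, evaluate in polar coordinates, and take square roots. The paper's argument is slightly terser but identical in substance.
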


\begin{proof}
  Using the standard trick of multiplying two copies of the integral together, using Fubini's Theorem, and converting to polar coordinates, we have
  \[\int_0^{\rho} 2 \pi r e^{-\frac{1}{2} r^2} \ud r < \int_{[-\rho, \rho]^2} e^{-\frac{1}{2}(x^2 + y^2)} \ud y \, \ud x < \int_0^{\sqrt{2} \rho} 2 \pi r e^{-\frac{1}{2} r^2} \ud r \]
so computing the left and right integrals and taking square roots gives the result.
\end{proof}

\begin{proof}[Proof of Theorem \ref{return_prob_estimates_1}]
   We first consider the case where $t \frac{k(k-1)}{v(v-1)}$ is not an integer.  From the definitions of $X_t$ and $Y_t$, since $X_t$ is supported on $\Z^d$ then it is trivially only possible to have $Y_t = \vec 0$ if $t \frac{k(k-1)}{v(v-1)} \in \Z$, which establishes \eqref{return_prob_bound_awful}.

  When $t \frac{k(k-1)}{v(v-1)} \in \Z$, we recall from \eqref{fourier_inversion} that 
  \begin{equation*}
    \PB_{v,k}^{(t)}(\vec 0, \vec 0) = (2 \pi)^{-d} \int_{[-\pi, \pi]^d} \Phi_Y (\vec \theta)^t \, d \vec \theta \, .
  \end{equation*}
If $t \frac{k}{v} \not \in \Z$, then from \eqref{integral_breakup_4}, we have
  \begin{equation*}
    \PB_{v,k}^{(t)}(\vec 0, \vec 0) = (2 \pi)^{-d} \int_{R_{B, \equiv}^{\delta} \cup R_{C, \equiv}^{\delta}} \Phi_Y (\vec \theta)^t \, d \vec \theta
  \end{equation*}
whence Proposition \ref{bad_region_decay} gives rise to \eqref{return_prob_bound_bad}.  

If instead $t \frac{k}{v} \in \Z$, \eqref{integral_breakup_5} implies that
  \begin{align*}
   & \left| \PB_{v,k}^{(t)}(\vec 0, \vec 0) -  (2 \pi)^{-d}(k-1)^{v-1} \int_{T_{\vec 0}^{\delta}} \Phi_Y(\vec \theta)^t \ud \vec \theta \right| = \left| (2 \pi)^{-d} \int_{R_{B, \equiv}^{\delta} \cup R_{C, \equiv}^{\delta}} \Phi_Y(\vec \theta)^t \ud \vec \theta \right|
  \end{align*}
so that Proposition \ref{bad_region_decay} yields 
  \begin{equation*}
     \left| \PB_{v,k}^{(t)}(\vec 0, \vec 0) -  (2 \pi)^{-d}(k-1)^{v-1} \int_{T_{\vec 0}^{\delta}} \Phi_Y(\vec \theta)^t \ud \vec \theta \right| \leq e^{- \nk^{-1} \frac{11}{768} t \delta^2} \, .
  \end{equation*}
Therefore, to prove \eqref{return_prob_upper} and \eqref{return_prob_lower}, it will suffice to show that
  \begin{equation} \label{final_theorem_goal_1}
    \frac{L(v,k,t,\delta)}{\sqrt{(2 \pi t)^{d-1} \det(M)}} \leq (2 \pi)^{-d} \int_{T_{\vec 0}^{\delta}} \Phi_Y (\vec \theta)^t \ud \vec \theta  \leq \frac{U(v,k,t,\delta)}{\sqrt{(2 \pi t)^{d-1} \det(M)}} \, .
  \end{equation}
Moreover, since $\Phi_Y(- \vec \theta)$ and $\Phi_Y(\vec \theta)$ are complex conjugates and $T_{\vec 0}^{\delta}$ is closed under negation, we have
  \begin{equation} \label{final_theorem_goal_2}
    \int_{T_{\vec 0}^{\delta}} \Phi_Y(\vec \theta)^t \ud \vec \theta = \int_{T_{\vec 0}^{\delta}} \real(\Phi_Y( \vec \theta)^t) \ud \vec \theta.
  \end{equation}
Our strategy will be to relate $\real(\Phi_Y(\vec \theta)^t)$ to $[\real(\Phi_Y(\vec \theta))]^t$ by using Lemma \ref{z^t_to_z}.

Let $t \geq 2$ be an integer and let $\beta(z), \alpha(z, t)$ be as in Definition \ref{defn_alpha_wdl}.  From Lemma \ref{good_region_bounds}, for $\vec \theta \in T_{\vec 0}^{\delta}$ we have
  \begin{equation} \label{beta_upper_bound}
    |\beta(\Phi_Y(\vec \theta))| \leq \frac{(d \delta)^3 / 6}{1/3} = \frac{(d \delta)^3}{2}
  \end{equation}
Since by hypothesis $t < 2(d \delta)^{-3}$, it follows that $\binom{t}{2} \beta(\Phi_Y(\vec \theta))^2 \leq \binom{t}{2} \frac{(d \delta)^6}{4} < \frac{1}{2}$, whence $\alpha(\Phi_Y(\vec \theta), t) > \frac{1}{2}$.  In particular, since $\real(\Phi_Y(\vec \theta)) > 0$ by \eqref{phi_real_lower_bound} and since $\alpha(\Phi_Y(\vec \theta), t) > 0$, we can make full use of Lemma \ref{z^t_to_z}.  From \eqref{appendix_2} and \eqref{beta_upper_bound} we have 
  \begin{equation} \label{transition_upper_bound}
    \real(\Phi_Y^t(\vec \theta)) \leq \left[ \real(\Phi_Y(\vec \theta)) \right]^t \left( 1 + \frac{(d \delta)^6}{4} \right)^{t/2}
  \end{equation}
and if $\beta$ and $\alpha$ denote $\beta(\Phi_Y(\vec \theta))$ and $\alpha(\Phi_Y(\vec \theta), t)$ respectively, then from \eqref{appendix_5} we have
  \begin{align}
    \real(\Phi_Y(\vec \theta)^t) & \geq \left[\real(\Phi_Y(\vec \theta)) \right]^t \left( 1 + \beta ^2 \right)^{t/2} \left(1 + t^2 \left[\frac{\beta}{\alpha} \right]^2 \right)^{-\frac{1}{2}}  \nonumber \\
   & \geq \left[\real(\Phi_Y(\vec \theta)) \right]^t \left(1 + t^2 \left[\frac{\beta}{\alpha} \right]^2 \right)^{-\frac{1}{2}}. \label{transition_lower_bound}
  \end{align}
Since $\alpha(\Phi_Y(\vec \theta), t) \geq 1/2$ and $\beta(\Phi_Y(\vec \theta)) \leq (d \delta)^3 / 2$, it follows that 
  \[ \left[ \frac{\beta(\Phi_Y(\vec \theta))}{\alpha(\Phi_Y(\vec \theta), t)} \right]^2 \leq (d \delta)^6  \]
so \eqref{transition_upper_bound} and \eqref{transition_lower_bound} combine to give
  \begin{equation}
    [1 + t^2 (d \delta)^6 ]^{-1/2} \int_{T_{\vec 0}^{\delta}} \left[ \real(\Phi_Y(\vec \theta)) \right]^t \ud \vec \theta
    \leq \int_{T_{\vec 0}^{\delta}} \real(\Phi_Y(\vec \theta)^t) \ud \vec \theta
    \leq \left[1 + \frac{(d \delta)^6}{4} \right]^{t/2} \int_{T_{\vec 0}^{\delta}} \left[ \real(\Phi_Y(\vec \theta)) \right]^t \ud \vec \theta.  \label{power_traded}
  \end{equation} 
  
 From Lemma \ref{good_region_bounds}, we see that there exists a function $\varepsilon_1: T_{\vec 0}^{\delta} \to \R$ such that for $\vec \theta \in T_{\vec 0}^{\delta}$,
  \[ \left[\real(\Phi_Y(\vec \theta)) \right]^t = e^{-\frac{1}{2} \vec \theta^T N \vec \theta} ( 1 + \varepsilon_1(\vec \theta))^t \]
and $|\varepsilon_1(\vec \theta)| < \frac{1}{6} (d \delta)^4 e^{\frac{1}{2} (d \delta)^2}$.  Since our assumptions imply that $d \delta < 1$, it follows that $e^{\frac{1}{2} (d \delta)^2} < 2$, so $|\varepsilon_1(\vec \theta)| < \frac{1}{3}(d \delta)^4$.  Hence, we have
  \begin{equation*}
   e^{-\frac{t}{2} \vec \theta^T N \vec \theta} \left[ 1 - \frac{1}{3}(d \delta)^4 \right]^t \leq \left[ \real(\Phi_Y(\vec \theta)) \right]^t  \leq e^{-\frac{t}{2} \vec \theta^T N \vec \theta} \left[ 1 + \frac{1}{3}(d \delta)^4 \right]^t 
  \end{equation*}
and substituting these bounds into \eqref{power_traded} gives
  \begin{align}
   & [1 + t^2 (d \delta)^6 ]^{-1/2}\left[1 - \frac{1}{3}(d \delta)^4 \right]^t \int_{T_{\vec 0}^{\delta}}  e^{-\frac{t}{2} \vec \theta^T N \vec \theta} \ud \vec \theta \nonumber \\
   & \qquad \quad \leq \int_{T_{\vec 0}^{\delta}} \real(\Phi_Y(\vec \theta)^t) \ud \vec \theta \nonumber \\
   & \qquad \quad \leq \left[1 + \frac{(d \delta)^6}{4} \right]^{t/2} \left[ 1 + \frac{1}{3} (d \delta)^4 \right]^t \int_{T_{\vec 0}^{\delta}} e^{-\frac{t}{2} \vec \theta^T N \vec \theta} \ud \vec \theta.  \label{gaussian_integral}
  \end{align} 
To verify \eqref{final_theorem_goal_1} (and thus complete the proof), by \eqref{gaussian_integral} and \eqref{final_theorem_goal_2} it suffices to show that 
  \begin{equation}
     \frac{[1 - e^{-\frac{1}{2} t (D_1\delta)^2}]^{(d-1)/2}}{\sqrt{\det(M)}} (2 \pi) \left( \frac{2 \pi}{t} \right)^{\frac{d-1}{2}}
     \leq \int_{T_{\vec 0}^{\delta}} e^{-\frac{t}{2} \vec \theta^T N \vec \theta} \ud \vec \theta
     \leq  \frac{[1 - e^{ -t (2D_2 \delta)^2}]^{(d-1)/2}}{\sqrt{\det(M)}} (2 \pi) \left( \frac{2 \pi}{t} \right)^{\frac{d-1}{2}} \label{last_reduction_really}
  \end{equation}
so we now turn our attention to the integral in the middle.

Using Lemma \ref{obnoxious_reparametrization}, we see that
  \[ 2 \pi \int_{[-\delta, \delta]^{d-1}} e^{-\frac{t}{2} \vec \mu^T M \vec \mu} \ud \vec \mu \leq \int_{T_{\vec 0}^{\delta}} e^{-\frac{t}{2} \vec \theta^T N \vec \theta} \ud \vec \theta \leq 2 \pi \int_{[-2\delta, 2\delta]^{d-1}} e^{-\frac{t}{2} \vec \mu^T M \vec \mu} \ud \vec \mu. \]
We recall from Corollary \ref{M_positive_definite} that $M$ is positive definite, so there is a symmetric, positive definite matrix $P$ for which $P^2 = M$. Hence,
  \[2 \pi \int_{[-\delta, \delta]^{d-1}} e^{-\frac{t}{2} \vec \mu^T M \vec \mu} \ud \vec \mu = 2 \pi \int_{[-\delta, \delta]^{d-1}} e^{-\frac{t}{2} (P\vec \mu)^T (P \vec \mu)} \ud \vec \mu  \]
so if we apply a change of variables with $\vec \eta = P \vec \mu$, we have
  \[ 2 \pi \int_{[-\delta, \delta]^{d-1}} e^{-\frac{t}{2} \vec \mu^T M \vec \mu} \ud \vec \mu =  \frac{2 \pi}{\det(P)} \int_{P[-\delta, \delta]^{d-1}} e^{-\frac{t}{2} \vec \eta^T \vec \eta} \ud \vec \eta \]
and similarly,
  \[ 2 \pi \int_{[-2\delta, 2\delta]^{d-1}} e^{-\frac{t}{2} \vec \mu^T M \vec \mu} \ud \vec \mu =  \frac{2 \pi}{\det(P)} \int_{P[-2\delta, 2\delta]^{d-1}} e^{-\frac{t}{2} \vec \eta^T \vec \eta} \ud \vec \eta \, .\]
Since the integrand is positive, using Proposition \ref{scaling_sets} gives
  \begin{equation*}
  \frac{2 \pi}{\det(P)} \int_{[-D_1 \delta, D_1 \delta]^{d-1}} e^{-\frac{t}{2} \vec \eta^T \vec \eta} \ud \vec \eta
  < \int_{[-\delta, \delta]^{d-1}} e^{-\frac{t}{2} \vec \mu^T M \vec \mu} \ud \vec \mu 
  < \frac{2 \pi}{\det(P)} \int_{[-2D_2 \delta, 2D_2 \delta]^{d-1}} e^{-\frac{t}{2} \vec \eta^T \vec \eta} \ud \vec \eta.
  \end{equation*}
Making one last change of variables with $\vec \nu = \sqrt{t} \vec \eta$ on the upper and lower bounds yields
  \begin{align*}
  &  \frac{2 \pi}{\det(P)} (\sqrt{t})^{-(d-1)} \int_{[-D_1\sqrt{t} \delta, D_1\sqrt{t} \delta]^{d-1}} e^{-\frac{1}{2} \vec \nu^T \vec \nu} \ud \vec \nu \\
  & \qquad \quad < \int_{[-\delta, \delta]^{d-1}} e^{-\frac{t}{2} \vec \mu^T M \vec \mu} \ud \vec \mu   \\
  & \qquad \quad < \frac{2 \pi}{\det(P)} (\sqrt{t})^{-(d-1)} \int_{[-2D_2 \sqrt{t} \delta, 2D_2 \sqrt{t} \delta]^{d-1}} e^{-\frac{1}{2} \vec \nu^T \vec \nu} \ud \vec \eta.
  \end{align*}
Since $\vec \nu^T \vec \nu = \sum \nu_{\{i,j\}}^2$, we can regard the integrals in the lower and upper bounds as the product of $d-1$ integrals of the form $\int e^{-\frac{1}{2} x^2} \ud x$.  Using the estimates in Lemma \ref{gaussian_estimate} gives
  \begin{align*}
  &  \frac{2 \pi}{\det(P)} (\sqrt{t})^{-(d-1)} \left( \sqrt{2 \pi(1 - e^{- \frac{1}{2} t (D_1 \delta)^2})} \right)^{d-1} \\
  & \qquad \quad < \int_{[-\delta, \delta]^{d-1}} e^{-\frac{t}{2} \vec \mu^T M \vec \mu} \ud \vec \mu   \\
  & \qquad \quad < \frac{2 \pi}{\det(P)} (\sqrt{t})^{-(d-1)} \left( \sqrt{2 \pi(1 - e^{- t (2D_2 \delta)^2})} \right)^{d-1} 
  \end{align*}
and since $\det(P) = \sqrt{\det(M)}$, this yields \eqref{last_reduction_really} and completes the proof.
\end{proof}

\begin{proof}[Proof of Theorem \ref{number_BIBD_matrices}]
The main point of the proof is to allow $t$ and $\delta$ to vary in such a way that in \eqref{return_prob_upper} and \eqref{return_prob_lower}, the $U$ and $L$ terms tend to $1$, while the error terms in \eqref{return_prob_bound_bad}, \eqref{return_prob_upper}, and \eqref{return_prob_lower} tend to $0$.  For a fixed $v$ and $k$, we claim that setting $\delta = t^{-5/12}$ will accomplish this.
 
We first note that for sufficiently large $t$, $\delta$ is arbitrarily small and thus $\delta < k^{-2} \nk^{-2} \left[ \frac{1}{6 \cdot 96^2} \left( \frac{2 \pi}{k-1} \right)^4 \right]$ eventually holds.  Similarly, since $(d \delta)^{-3} = d^{-3} t^{5/4}$, for sufficiently large $t$ we have $t < 2(d \delta)^{-3}$.  This allows all parts of Theorem \ref{return_prob_estimates_1} to be used. 

We turn our attention to the terms in square brackets in $L$ and $U$.  Since $t^2 \delta^6 = t^{-1/2}$, it follows that $[1 + t^2 (d \delta)^6]^{-1/2} \to 1$ as $t \to \I$.  For any constant $C$ that does not depend on $t$, we have
  \[ (1 + C t^{-5/3})^t = e^{C t^{-2/3}}[1 + o(1)]\]
which tends to $1$ as $t \to \infty$.  Since $\frac{d^4}{3}$ does not depend on $t$, it follows that $\left[1 - \frac{1}{3}(d \delta)^4 \right]^t \to 1$ and $\left[1 + \frac{1}{3}(d \delta)^4 \right]^t \to 1$ as $t \to \I$.  Since $t \delta^2 = t^{1/6}$ and $D_1, D_2, d$ do not depend on $t$, it follows that $[ 1 - e^{-\frac{1}{2} t(D_1 \delta)^2}]^{(d-1)/2} \to 1$ and $[1 - e^{-t (2 D_2 \delta)^2}]^{(d-1)/2} \to 1$ as $t \to \I$.  Finally, for $C$ that does not depend on $t$ we have
  \[ ( 1 + C t^{-5/2} )^{t} = e^{C t^{-3/2}}[1 + o(1)]\]
which tends to $1$ as $t \to \I$; since $\frac{d^6}{4}$ does not depend on $t$, it follows that $\left[ 1 + \frac{1}{4} (d \delta)^6 \right]^{t/2} \to 1$ as $ t \to \I$.  

Putting the above pieces together, we have now shown that as $t \to \I$, $L(v, k, t, t^{-5/12}) \to 1$ and $U(v, k, t, t^{-5/12}) \to 1$.  Hence, \eqref{return_prob_upper} and \eqref{return_prob_lower} imply that if $t$ is such that $t \frac{k}{v} \in \Z$ and $t \frac{k(k-1)}{v(v-1)} \in \Z$, 
\begin{align*}
  & \liminf_{ t \to \I} \frac{\PB_{v,k}^{(t)}(\vec 0, \vec 0)}{\left[ \frac{(k-1)^{v-1}}{\sqrt{(2 \pi t)^{d-1} \det(M)}} \right]} \geq \lim_{t \to \I} \left[ L(v, k, t, t^{-\frac{5}{12}}) - \frac{e^{-\nk^{-1} \frac{11}{768} t^{1/6}}}{\left[ \frac{(k-1)^{v-1}}{\sqrt{(2 \pi t)^{d-1} \det(M)}} \right]} \right] = 1
\end{align*}
and
\begin{align*}
  & \limsup_{ t \to \I} \frac{\PB_{v,k}^{(t)}(\vec 0, \vec 0)}{\left[ \frac{(k-1)^{v-1}}{\sqrt{(2 \pi t)^{d-1} \det(M)}} \right]} \leq \lim_{t \to \I} \left[ U(v, k, t, t^{-\frac{5}{12}}) + \frac{e^{-\nk^{-1} \frac{11}{768} t^{1/6}}}{\left[ \frac{(k-1)^{v-1}}{\sqrt{(2 \pi t)^{d-1} \det(M)}} \right]} \right] = 1.
\end{align*}
Combining these inequalities with \eqref{prob_number_relation} and the calculation of $\det(M)$ in Lemma \ref{determinant_calc} completes the proof.
\end{proof}

\section{Conclusion}


The basic strategy of this work is adopted in principle from \cite{wdl_levin}, where these analogous tasks were completed for partial Hadamard matrices instead of BIBD incidence matrices.  However, the structural differences between the two combinatorial design types necessitated two significant adaptations. First, the maximal set of the Hadamard walk characteristic function is rather different from the one given for the BIBD walk characteristic function in \eqref{lambda_structure}.  In particular, the maximal set for the partial Hadamard walk characteristic function was a zero-dimensional subset of $\R^d$, whereas the maximal set for the BIBD walk characteristic function was a one-dimensional subset of $\R^{d}$.  This corresponds to the fundamental difference that the partial Hadamard walk was supported on a $d$-dimensional sublattice of $\R^{d}$, whereas the BIBD walk is actually supported on an $\left(d -1 \right)$-dimensional sublattice of $\R^{d}$.

The second key difference between the partial Hadamard walk and the BIBD walk rested in a computation of a second moment.  Specifically, finding the return probabilities of each walk required computation of the quantity \mbox{$\E[ ( \vec \mu \cdot Y_1 )^2 ]$}, where $\vec \mu \in \R^{d}$ and $Y_1$ represented a single step of the respective random walks.  In both cases, it was computed that $\E[(\vec \mu \cdot Y_1)^2] = \vec \mu^T N \vec \mu$ for some $d \times d$ matrix $N$.  In the BIBD walk, $N$ was the combinatorially-defined matrix given in \eqref{matrix_coeff_def}, which required significant further analysis and a lengthy computation of its principal minor.  In the partial Hadamard walk, $N$ was instead the identity matrix $I_d$, which simplified many of the calculations and entirely avoided the need for a discussion such as that in Section \ref{S:matrix_determinant}.

While we believe that counting the incidence matrices of BIBDs is of independent interest, we recall here that the more famous and well-studied problem in combinatorial design theory is that of the number of BIBD isomorphism classes. The relationship between the two problems involves involves the difficult combinatorics of permuting the rows and columns of the matrices. We hope at some point to be able to exploit Theorem \ref{number_BIBD_matrices} to gain some insight about the number of underlying isomorphism classes. It may also be possible to sharpen many of the estimates contained throughout in order to solve some currently unanswered existence questions for particular configurations of $v, k, t$, particularly in the case where $t$ is significantly larger than $v$ and $k$.



\section{Acknowledgements}
The author would like to extend his appreciation to David Levin for advice and guidance throughout completion of this project, as well as to Warwick de Launey, whose notes on this subject were helpful.

\bibliography{thesis_montgomery}

\end{document}